\DeclareMathOperator{\supp}{supp}
\DeclareMathOperator{\loc}{loc\ }
\DeclareMathOperator{\divg}{div}
\DeclareMathOperator{\dist}{dist}
\DeclareMathOperator{\diam}{diam}
\DeclareMathOperator*{\fiint}{\ensuremath{\iint\text{\kern-1.36em{\raisebox{5.87pt}{\rotatebox{-93}{$\setminus$}}}}}\,}
\theoremstyle{plain}
\newtheorem{lemma}[equation]{Lemma}
\newtheorem{lem}[equation]{Lemma}
\newtheorem{thm}[equation]{Theorem}
\newtheorem{cor}[equation]{Corollary}
\theoremstyle{definition}
\newtheorem{defi}[equation]{Definition}
\newtheorem{defn}[equation]{Definition}
\theoremstyle{remark}
\newtheorem{rem}[equation]{Remark}
\numberwithin{equation}{section}
\newcommand{\om}{\Omega}
\newcommand{\pom}{\partial\Omega}
\newcommand{\norm}[1]{\left\Vert#1\right\Vert}
\newcommand{\abs}[1]{\left\vert#1\right\vert}
\newcommand{\br}[1]{\left(#1\right)}
\newcommand{\set}[1]{\left\{#1\right\}}
\newcommand{\Real}{\mathbb R}
\newcommand{\eps}{\varepsilon}
\newcommand{\C}{\mathcal{C}}
\renewcommand{\H}{\mathcal{H}}
\newcommand{\vp}{\varphi}
\newcommand{\wt}{\widetilde}
\newcommand{\bdy}{\partial}
\newcommand{\stcomp}[1]{{#1}^{\mathsf{c}}}
\newcommand{\mZ}{\mathcal{Z}}
\newcommand{\1}{\mathbbm{1}}
\newcommand{\Tlz}{T_\lambda^z}
\newcommand{\NN}{{\mathbb{N}}}
\newcommand{\ZZ}{{\mathbb{Z}}}
\newcommand{\rd}{\mathbb{R}^d}
\newcommand{\reu}{\mathbb{R}^{d+1}_+}
\newcommand{\ree}{\mathbb{R}^{d+1}}
\newcommand{\R}{\mathbb{R}} 
\newcommand{\cN}{\mathfrak{N}} 
\newcommand{\cA}{\mathfrak{A}}
\newcommand{\sm}{\setminus} 
\renewcommand{\epsilon}{\varepsilon} 
\newcommand{\wh}{\widehat}
\renewcommand{\d}{\partial} 
\newcommand{\ms}{\medskip}
\renewcommand{\emptyset}{\o}
\def\div{\mathop{\operatorname{div}}\nolimits}
\newcommand\SM[1]{{\color{red}#1}}
\begin{document}

\title[Small $A_\infty$ results]
{Small $A_\infty$ results for Dahlberg-Kenig-Pipher operators in sets with uniformly rectifiable boundaries}
\thanks{
G. David was partially supported by the European Community H2020 grant GHAIA 777822,
and the Simons Foundation grant 601941, GD.
S. Mayboroda was partly supported by the NSF RAISE-TAQS grant DMS-1839077 and the Simons foundation grant 563916, SM}

\author{Guy David}
\author{Linhan Li}
\author{Svitlana Mayboroda}

\newcommand{\Addresses}{{
  \bigskip
  \footnotesize

 Guy David, \textsc{Universit\'e Paris-Saclay, CNRS, 
 Laboratoire de math\'ematiques d'Orsay, 91405 Orsay, France} 
 \par\nopagebreak
  \textit{E-mail address}: \texttt{guy.david@universite-paris-saclay.fr}

  \medskip

  Linhan Li, \textsc{School of Mathematics, University of Minnesota, Minneapolis, MN 55455, USA}\par\nopagebreak
  \textit{E-mail address}: \texttt{linhan\_li@alumni.brown.edu}

  \medskip

  Svitlana Mayboroda, \textsc{School of Mathematics, University of Minnesota, Minneapolis, MN 55455, USA}\par\nopagebreak
  \textit{E-mail address}: \texttt{svitlana@math.umn.edu}

}}

\begin{abstract}

In the present paper we consider elliptic operators $L=-\divg(A\nabla)$ in a domain
bounded by a chord-arc surface $\Gamma$ with small enough constant, and whose 
coefficients $A$ satisfy a weak form of the Dahlberg-Kenig-Pipher condition of
approximation by constant coefficient matrices, with a small enough Carleson norm, 
and show that the elliptic measure with pole at infinity associated to $L$ is
$A_\infty$-absolutely continuous with respect to the surface measure on $\Gamma$,
with a small $A_\infty$ constant. In other words, we show that for relatively flat uniformly rectifiable sets and for operators with slowly oscillating coefficients the elliptic measure satisfies the $A_\infty$ condition with a small constant and the logarithm of the Poisson kernel has small oscillations.
\end{abstract}

\maketitle

Key words: elliptic measure, $A_\infty$, weak Dahlberg-Kenig-Pipher condition,
uniform rectiability, small constants.

\tableofcontents

\section{Introduction and main results}

Recent developments have showed that for the class of Dahlberg-Kenig-Pipher operators the elliptic measure is absolutely continuous with respect to the Hausdorff measure on all uniformly rectifiable sets with some mild topological conditions (we will provide the relevant list of references below). The conditions on the operator and on the geometry are both essentially necessary and in fact, one can establish the equivalence of the aforementioned geometric features of the domain to the property of absolute continuity of harmonic measure (\cite{azzam2020harmonic}). 
The present paper focuses on the ``small constant" version of these results, showing, roughly speaking, that for relatively flat domains and for slowly oscillating coefficients, the elliptic measure is $A_\infty$ with respect to the Hausdorff measure with a small norm and that the oscillations of the logarithm of the Poissin kernel are small. This turned out to be surprisingly non-trivial primarily because the standard methods of work with Carleson measures, uniform rectifiability and related notions ``bootstrap" and hence, enlarge the constants.

To be precise, in this paper we prove small $A_\infty$ estimates for the elliptic measure, with pole at infinity,
of a divergence form elliptic operators $L=-\divg(A\nabla)$, with a matrix of coefficients $A$ 
that satisfies a weak Dahlberg-Kenig-Pipher condition (see the definitions below) 
with a small corresponding Carleson norm. 
We will do this both in the upper half-space $\R^{d+1}_+$  and in any of the two 
connected components $\Omega$ bounded by a Chord-arc surface with small constant 
$\Gamma \subset \R^{d+1}$.

Concerning the case of $\R^{d+1}_+$, this paper can be viewed as a continuation of a paper of Bortz, Toro, and Zhao
\cite{bortz2021elliptic}, where a similar result was proved in the vanishing case. 
That is, they assume in addition that the Carleson measure associated to $A$ has a 
vanishing trace, as in Definition \ref{d13bis}, and get that the elliptic measure with pole at 
infinity is $A_\infty$ with respect to the Lebesgue measure on $\R^d$, with a vanishing constant. We shall use their result, plus a compactness argument,
to say that if our result were not true on $\R^{d+1}_+$, we would be able to construct 
a counterexample to their result too. 

Let us say a bit more about the proof of \cite{bortz2021elliptic}. 
It is based on estimates for the Green function for $L$ (with a pole at infinity), 
which come from \cite{david2021carleson},
and, to go from the Green function to the elliptic measure, 
a lemma of Korey \cite{korey1998carleson}
on weights, which itself is a vanishing constant variant of a result of Fefferman, Kenig and Pipher \cite{fefferman1991theory}.
Surprisingly, the small constant variant of these results on weights is not as easy to deduce from 
 \cite{fefferman1991theory} 
and \cite{korey1998carleson} as what we do here at the level of operators (hence the proof here),
but as we were writing this paper, we found out that Bortz, Egert and Saari \cite{bortz2021theorem} proved 
the needed the small constant variant and thus obtained the same small $A_\infty$ result as here 
on $\R^{d+1}_+$. Nonetheless, we believe that the present proof by compactness is interesting in its own right, also
because the argument in \cite{bortz2021theorem} is not so short.

Now let us discuss the case of Chord-arc surfaces with small constant ($CASSC$). These were introduced by Semmes in \cite{semmes1989criterion, semmes1990differentiable, semmes1990analysis},
and they can also be seen as the correct notion of uniformly rectifiable sets with a small constant,
a small generalization of Lipschitz graphs with small constant. The most convenient definition for us will be by ``very big pieces of small Lipschitz graphs'' (see Definition \ref{d1b12}), 
but many other definitions are equivalent. We will say more about $CASSC$ in Section \ref{Scassc},
and in particular we will check that if $\Gamma$ is a $CASSC$, then $\R^{d+1} \sm \Gamma$ 
has exactly two connected components, which are both NTA domains. Our main result is 
that if $\Omega$ is one of these components, and the elliptic operator $L=-\divg(A\nabla)$ satisfies the weak DKP condition on $\Omega$ (see Definition~\ref{d1b10}), 
with a small enough norm, then the corresponding elliptic measure on $\Gamma$, with pole at infinity, is absolutely continuous with respect to the 
Hausdorff measure on $\Gamma$, with a small $A_\infty$ density
(see Definition \ref{d1b14}).

The main case will be when $\Gamma$ is a Lipschitz graph with small constant, and will be deduced from the case of $\R^{d+1}$ with a suitable change of variable from $\Omega$ to 
$\R^{d+1}_+$. Here the point is that the distortion coming from the change of variable is compatible
with the weak DKP condition. After this, we will go from small Lipschitz graphs to $CASSC$ with a 
comparison argument, where we put small Lipschitz domains 
{\it outside of} $\Omega$ 
with a large boundary intersection. 
See Corollary \ref{cor U} for the precise statement.  Typically, one might want to approach $\om$ from {\it inside} by a Lipschitz graph (see Lemma \ref{l5a7} ) as the comparison of elliptic measures in two domains would be easier. However, this approach would not work in our situation as the weak DKP condition does not cooperate well when restricting to subdomains. Therefore, 
we approximate $\Omega$ by small Lipschitz graphs from outside and need, somewhat unconventionally in this context, to extend the coefficients 
to the Lipschitz domains.

Let us now give more precise definitions and state the main results. We will be working on the domain
$\Omega \subset \R^{d+1}$, bounded by $\Gamma$, which will either be a $CASSC$ (defined below)
or $\R^d$. Points of $\Gamma$ will be denoted by lower-case letters (like $x$), and points of 
$\Omega$ by upper-case letters (like $X$). For $x\in\Gamma$ and $r>0$, we shall use the Carleson boxes
\begin{equation} \label{1b1}
T(x,r) = \Omega \cap B_r(x),
\end{equation}
where $B_r(x) = B(x,r)$ denotes the open ball in $\R^{d+1}$, the surface balls $\Delta(x,r)=B(x,r)\cap\Gamma$,
and 
the Whitney boxes 
\begin{equation} \label{1b2}
W_\Omega(x,r) = \big\{ X \in \Omega \cap B_r(x) \, ; \, \dist(X,\Gamma) \geq r/2\big\}
\subset T(x,r).
\end{equation}

We now introduce
Carleson measures on $\Gamma \times (0,+\infty)$.
There is a similar notion of Carleson measures on $\Omega$, which will not be 
needed here, but notice that in the special case of $\reu$, the two notions are the same.

\ms
\begin{defi}[Carleson measures on $\Gamma \times (0,+\infty)$]\label{d13bis}
A \underbar{Carleson measure} on $\Gamma \times (0,+\infty)$
is a nonnegative Borel  measure $\mu$ on $\Gamma \times (0,+\infty)$ whose Carleson norm 
\begin{equation} \label{1b4bis}
\norm{\mu}_{\mathcal{C}}
:=\underset{(x,r) \in \Gamma \times (0,+\infty)}{\sup} r^{-d}\mu(\Delta(x,r) \times (0,r))
\end{equation}
is finite. We use $\mathcal{C}$, or $\mathcal{C}_\Omega$, or
$\mathcal{C}(\Gamma \times (0,+\infty))$ to denote the set of 
Carleson measures on $\Gamma \times (0,+\infty)$. 

We say that $\mu$ is a \underbar{Carleson measure} on $\Gamma \times (0,+\infty)$ 
\underbar{with vanishing trace} when $\mu$ is a Carleson measure on $\Gamma \times (0,+\infty)$ and 
\begin{equation} \label{1b5bis}
\lim_{r_0 \to 0} \  \sup_{0 < r \leq r_0} \sup_{x\in \Gamma} \ r^{-d}\mu(\Delta(x,r) \times (0,r)) = 0.
\end{equation}

For any pair $(x_0,r_0) \in \Gamma \times (0,+\infty)$, we use $\mathcal{C}(x_0,r_0)$ 
or sometimes $\mathcal{C}(\Delta_0)$ when we work with $\Delta_0 = \Delta(x_0,r_0)$,
to denote the set of Borel measures satisfying the Carleson condition restricted to 
$\Delta_0 \times (0,r_0)$, i.e., 
such that
\begin{equation} \label{1b6bis}
\norm{\mu}_{\mathcal{C}(x_0,r_0)} 
:=\underset{(x,r) \in \Gamma \times (0,r_0), B(x,r) \subset B(x_0,r_0)}{\sup}
\, r^{-d} \mu(\Delta(x,r) \times (0,r)) < +\infty.
\end{equation}
\end{defi}

\ms
We shall consider
operators in divergence form $L=-\divg(A\nabla)$,
where $A=\begin{bmatrix} a_{ij}(X)\end{bmatrix}: \Omega\to M_{d+1}(\Real)$ 
is a matrix-valued function, which is elliptic, as follows. 

\ms
\begin{defi}[Elliptic operators]\label{d1b7}
Let $\mu_0 \geq 1$ be given. 
We say that $L=-\divg(A\nabla)$ (and by extension the matrix-valued function $A$)
is \underbar{$\mu_0$-elliptic} on $\Omega$ when  
\begin{equation}
\langle A(X)\xi,\zeta\rangle \le \mu_0\abs{\xi}\abs{\zeta} \mbox{ and } \langle A(X)\xi,\xi\rangle \ge \mu_0^{-1}\abs{\xi}^2
\text{ for  }X \in \Omega \text{ and  }\xi, \eta\in\Real^{d+1}.
\label{cond ellp}
\end{equation}
We say that $L$ (or $A$) is elliptic when it is $\mu_0$-elliptic for some $\mu_0 \geq 1$.
Finally we denote by $\cA_0(\mu_0)$ the collection of all {\it constant} $\mu_0$-elliptic matrices.
\end{defi} 

\ms
We are 
ready to define the weak DKP condition. Let $\mu_0 \geq 1$ be given. 
We use the following quantity to measure the closeness of a $\mu_0$-elliptic matrix 
$A=A(X)$ to constant coefficient matrices. For $x\in\Gamma$ and $r>0$, we define
\begin{equation} \label{1a5}
\alpha_A(x,r) = \inf_{A_0 \in \cA_0(\mu_0)}
\bigg\{\fiint_{X \in W(x,r)} |A(X)-A_0|^2 dX\bigg\}^{1/2}.
\end{equation}
Notice that we only integrate on the Whitney box, but it turns out that since $\Gamma$ is nice,
a Carleson measure condition on the $\alpha_A(x,r)$ also yields a similar control on the larger 
numbers $\gamma_A(x,r)$ where you integrate on the full box $T(x,r)$; see 
Lemma~\ref{lem gammabdalpha} below. Also, we required the constant matrix
$A_0$ to be $\mu_0$-elliptic, but if we did not, we would still get an equivalent number 
$\wt\alpha_A(x,r)$; use Chebyshev or see \cite{david2021carleson}.

\ms
\begin{defi}[Weak DKP condition, vanishing weak DKP condition] \label{d1b10} 
Let $A$ be a $\mu_0$-elliptic matrix-valued function on $\Omega$. 
\begin{itemize}
    \item We say that $A$ satisfies the weak DKP condition with
constant $M > 0$, when the measure  $\mu$ defined by 
$d\mu(x,r) = \alpha_A(x,r)^2 \frac{d\sigma(x) dr}{r}$
is a Carleson measure on $\Gamma \times (0,+\infty)$, with norm
\begin{equation}\label{1a7}
\cN(A) : = \norm{\alpha_A(x,r)^2 \frac{d\sigma(x) dr}{r}}_{\mathcal{C}} \le M,
\end{equation}
and where we called $\sigma = \H^d_{\vert \Gamma}$ the surface measure on $\Gamma$.
\item We say that $A$ satisfies the vanishing weak DKP condition if $\mu$ defined by 
$d\mu(x,r) = \alpha_A(x,r)^2 \frac{d\sigma(x) dr}{r}$
is a Carleson measure on $\Gamma \times (0,+\infty)$ with norm vanishing trace.
\end{itemize}
\end{defi}

\ms
The weak DKP condition is a weaker version of the celebrated Carleson condition popularized
by Dahlberg, Kenig, and Pipher (DKP), which
instead demands that $\wt\alpha_A(x,r)^2\frac{dxdr}{r}$ satisfies a Carleson measure estimate, where
\[
\wt\alpha_A(x,r) = r \sup_{(y,s) \in W(x,r)} |\nabla A(y,s)|.
\]
In 1984, Dahlberg first introduced this condition, and conjectured  that such a Carleson condition guarantees 
the absolute continuity of the elliptic measure
with respect to the Lebesgue measure in the upper half-space. In 2001, Kenig and Pipher \cite{kenig2001dirichlet} proved Dahlberg's conjecture. More precisely, they show that the DKP condition guarantees that the corresponding elliptic measure is $A_\infty$ with respect to the surface measure in Lipschitz domains. Let $\mu$ and $\nu$ be two non-negative Borel measures on $\Gamma$. We say that $\mu\in A_\infty(\nu)$ if there exists $C,\eta,\theta>0$ such that for any surface ball $\Delta\subset\Gamma$ and any $E\subset \Delta$, we have 
\begin{equation}\label{def Ainfty}
    \frac{\mu(E)}{\mu(\Delta)}\le C\br{\frac{\nu(E)}{\nu(\Delta)}}^\theta \quad \text{and}\quad \frac{\nu(E)}{\nu(\Delta)}\le C\br{\frac{\mu(E)}{\mu(\Delta)}}^\eta.
\end{equation}
We refer the readers to \cite{coifman1974weighted} for equivalent definitions of $A_\infty$.

The weak DKP condition was first introduced in \cite{david2021carleson},
in connection with the regularity of the Green function.
In the aforementioned recent work of Bortz, Toro and Zhao \cite{bortz2021elliptic},
the authors show that the weak DKP condition 
is sufficient for $A_\infty$ of the elliptic measure in the upper half-space (see Lemma \ref{lem DKPtoAinfty}).

Next we recall our main condition on $\Gamma$. We use the simplest definition of
$CASSC$ for our purposes, and will provide further comments later in Section \ref{Scassc}.

\ms
\begin{defi}[Chord-arc surfaces with small constant] \label{d1b12} 
Let $\varepsilon \in (0,10^{-1})$ be given.
Let $\Gamma$ be a closed, unbounded set in $\R^{d+1}$. We say that
$\Gamma$ is a chord-arc surface with constant $\varepsilon$, and we write
$\Gamma \in CASSC(\varepsilon)$, when for $x\in \Gamma$
and $r > 0$, we can find an $\varepsilon$-Lipschitz graph $G_{x,r}$
that meets $B(x,r/2)$ and such that
\begin{equation} \label{1b12}
\H^d(\Gamma \cap B(x,r) \sm G_{x,r}) 
+ \H^d(G_{x,r} \cap B(x,r) \sm \Gamma) \leq \varepsilon r^d.
\end{equation}

By $\varepsilon$-Lipschitz graph, we mean a set $G = \big\{ x + A(x) \, ; \, x \in P \big\}$,
where $P \subset \R^{d+1}$ is a vector hyperplane, $A : P \to P^\perp$ is a Lipschitz function
with norm at most $\varepsilon$, and $P^\perp$ is the vector line orthogonal to $P$.
\end{defi}

\ms
In this paper, $\Omega$ will always be one of the two connected component of $\Gamma$,
where either $\Gamma$ is the hyperplane $\R^d \subset \R^{d+1}$, or 
$\Gamma \in CASSC(\varepsilon)$ for a small enough $\varepsilon > 0$.
In both cases, $\Gamma$ is uniformly rectifiable, $\Omega$ satisfies the (two-sided) NTA condition,
and for $X \in \Omega$ there is a standard definition of the $L$-elliptic measure $\omega^X$
with pole at $X$ and the Green function $G_L(X,Y)$, $Y \in \Omega$, as long as $L$
is elliptic. As we mentioned before, since in addition $A$ satisfies the weak DKP condition, $\omega^X$ is even 
absolutely continuous with respect to the surface measure $\sigma = \H^d_{\vert \Gamma}$,
and the density $\frac{\d \omega}{d \sigma}$ is an $A_\infty$ weight when $\Gamma=\rd$  by \cite{bortz2021elliptic}\footnote{In fact, they prove the result for $C^1$-square domains.}. We will see that this is also true when $\Gamma\in CASSC(\eps)$.
With the usual DKP condition, this is known and is stated in \cite{hofmann2021uniform}, but factually is a straightforward combination of \cite{kenig2001dirichlet} with \cite{david1990lipschitz} as the domains in question are NTA.  However, the main point of the paper is not the weaker DKP condition, but the fact that the $A_\infty$ 
constant is as small as we want.

Our main statement will be easier to state with the the elliptic measure $\omega^\infty$
with a pole at infinity which we will define in Section \ref{SGrHa} 
as a normalized limit of measures $\omega^X$, with $|X|$ tending to infinity.
One advantage, compared to the usual collection of measures $\{ \omega^X \}$, $X\in \Omega$,
is that $A_\infty$ is easier to define. Appropriately modified statements are valid for the case of a finite pole as well, and will be stated below. Recall that $\Gamma \in CASSC(\varepsilon)$ 
for a small enough $\varepsilon > 0$, and that $\sigma = \H^d_{\vert \Gamma}$.

\ms
\begin{defi}[$A_\infty$ with small constant] \label{d1b14} 
Let $\delta \in (0,10^{-1})$ be given; we say that the positive Borel measure $\omega$ on 
$\Gamma$ lies in $A_\infty(\sigma,\delta)$ when
\begin{equation}\label{1b15} 
  \abs{\frac{\omega(E)}{\omega(\Delta)}-\frac{\sigma(E)}{\sigma(\Delta)}}<\delta
 \end{equation}
for every surface ball $\Delta = B(x,r) \cap \Gamma$ and every Borel subset $E\subset \Delta$.
\end{defi}

\ms
We will see later that this is also equivalent (modulo the precise value of $\delta$) 
to saying that $\omega$ is absolutely continuous with respect to $\sigma$, and the
corresponding density $k = \frac{d\omega}{d\sigma}$ is such that 
$||\log(k)||_{BMO} \leq \tau$, where $\tau$ tends to $0$ with $\delta$.
Notice that none of the two conditions change when we multiply $\sigma$ or $\omega$
by a constant. 

The main result of the present paper can be formulated as follows.

\begin{thm}\label{t1b1}
For every $\delta > 0$, we can find $\varepsilon > 0$, that depends only on
$\delta$, $d$, and $\mu_0$, such that if $\Gamma \in CASSC(\varepsilon)$,
$L=-\divg{A\nabla}$ is an elliptic operator with ellipticity $\mu_0$, which satisfies
the weak DKP condition with norm $\cN(A)<\varepsilon$, then the associated harmonic measure
$\omega^\infty$ lies in $A_\infty(\sigma,\delta)$.
\end{thm}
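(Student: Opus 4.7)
My plan is to prove the theorem by a sequence of three geometric reductions, each of which preserves the smallness of the relevant constants: first the half-space case $\Gamma=\mathbb{R}^d$, then the case of a small Lipschitz graph, and finally the general $CASSC(\varepsilon)$ case. As the introduction hints, I would handle the half-space case by a compactness-contradiction argument against the vanishing-trace theorem of Bortz-Toro-Zhao, the Lipschitz-graph case by a change of variables to the half-space, and the passage from Lipschitz graphs to $CASSC$ by an outside-approximation argument with a coefficient extension.

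For the half-space case I would argue by contradiction. Suppose that for some $\delta_0>0$ there is a sequence of $\mu_0$-elliptic matrices $A_n$ on $\mathbb{R}^{d+1}_+$ with $\cN(A_n)\to 0$ but whose pole-at-infinity elliptic measures $\omega_n^\infty$ do not lie in $A_\infty(\sigma,\delta_0)$. By translation and dilation invariance of the weak DKP norm, the ellipticity, and the defect in \eqref{1b15}, I may assume that the failure of \eqref{1b15} is witnessed on the unit surface ball at the origin. Uniform ellipticity delivers, along a subsequence, a weak $L^2_{\mathrm{loc}}$ limit $A_\infty$, and $\cN$ is lower semicontinuous under this convergence (by Fatou applied to the pointwise definition of $\alpha_A$), so $\cN(A_\infty)=0$ and the Carleson measure associated to $A_\infty$ has vanishing trace. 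De Giorgi-Nash-Moser compactness upgrades weak convergence of coefficients to local uniform convergence of solutions; once the Green functions with pole at infinity are normalized consistently at a fixed interior point, the normalized $G_n$ converge to the Green function of the limit operator $L_\infty$ with pole at infinity, and this yields weak convergence of $\omega_n^\infty$ to $\omega_\infty^\infty$ on compact subsets of $\mathbb{R}^d$. The defect \eqref{1b15} then passes to the limit, while the Bortz-Toro-Zhao theorem applied to $A_\infty$ gives $\omega_\infty^\infty\in A_\infty(\sigma,\delta_0/2)$, a contradiction.

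For Lipschitz graphs with small constant, I would pull back the PDE to $\mathbb{R}^{d+1}_+$ via a smooth change of variables $\Phi$ of Kenig-Pipher type. The pull-back operator is $(\mu_0+O(\varepsilon))$-elliptic, and its coefficient matrix $\widetilde{A}$ satisfies $\cN(\widetilde{A})\leq C\cN(A)+C\varepsilon$, because $\Phi$ is bi-Lipschitz close to a rigid motion with derivatives controlled by Carleson averages of $\nabla\varphi$. Since $\Phi|_{\mathbb{R}^d}$ maps onto $\Gamma$ with Jacobian $1+O(\varepsilon)$, the $A_\infty(\sigma,\delta)$ property transfers between the two elliptic measures up to an $O(\varepsilon)$ distortion, and choosing $\varepsilon$ smaller than the threshold from the half-space step concludes. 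For a general $\Gamma\in CASSC(\varepsilon)$ I would then use the outside-approximation strategy indicated in the introduction: on each surface ball $\Delta=\Gamma\cap B(x,r)$, take the $\varepsilon$-Lipschitz graph $G_{x,r}$ from Definition~\ref{d1b12}, form an auxiliary small-Lipschitz domain $\Omega^{\sharp}$ on the opposite side of $\Gamma$ from $\Omega$ whose boundary coincides with $G_{x,r}$ outside a set of $\H^d$-measure $\leq\varepsilon r^d$, and extend $A$ to $\Omega\cup\Omega^{\sharp}$ in such a way that $\cN$ grows by only $O(\varepsilon)$. Applying the Lipschitz-graph step to the enlarged domain and comparing the two elliptic measures via the NTA comparison encapsulated in Corollary~\ref{cor U}, while controlling the exceptional piece of $\Gamma\cap B(x,r)$ not in $G_{x,r}$, then delivers \eqref{1b15}.

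The main obstacle is the compactness step in the half-space case, specifically ensuring that the normalized pole-at-infinity Green functions converge to a bona fide Green function of the limit operator. Since these are unbounded positive solutions whose growth rate is tied to the operator itself, they must be normalized consistently and the limit shown to be nontrivial; this needs a boundary Harnack principle that is uniform along the sequence, which is available because the NTA structure of $\mathbb{R}^{d+1}_+$ is fixed and the weak DKP bounds are uniform in $n$. A second subtlety, flagged explicitly in the introduction, arises in the $CASSC$ step: the weak DKP condition does not cooperate with restriction, so extending the coefficient matrix across $\Gamma$ while only increasing the Carleson norm by $O(\varepsilon)$ is nontrivial. I expect the right recipe is to reflect the affine approximants minimizing \eqref{1a5} on each Whitney box and glue them via a partition of unity adapted to $G_{x,r}$, rather than trying to reflect $A$ itself.
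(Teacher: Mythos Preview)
Your three-step reduction (half-space, then small Lipschitz graphs, then $CASSC$) matches the paper's architecture, and your treatment of the second and third steps is essentially what the paper does: the Dahlberg--Kenig--Stein change of variables with $\cN(\widetilde A)\le C\cN(A)+C\varepsilon$ for Lipschitz graphs, and the outside approximation plus a Whitney-type coefficient extension (your ``reflect the affine approximants and glue'' is close to the paper's construction in Section~\ref{SExt}) for $CASSC$.

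The half-space step, however, contains a genuine gap. You propose to take a weak $L^2_{\mathrm{loc}}$ limit $A_\infty$ of the $A_n$, observe $\cN(A_\infty)=0$, and then pass both the elliptic measures and the defect \eqref{1b15} to the limit. Two things go wrong. First, weak-$\ast$ convergence of coefficients does not make the elliptic measures converge to those of $-\divg(A_\infty\nabla)$: by H-convergence the solutions (and Green functions) converge to those of the \emph{H-limit} $A^H$, which in general differs from $A_\infty$, and there is no reason $A^H$ should satisfy the vanishing weak DKP condition needed to invoke Bortz--Toro--Zhao. Second, and more seriously, even if you had weak convergence $\omega_n^\infty\rightharpoonup\omega^\infty$ to the right limit, the defect is witnessed by Borel sets $E_n\subset\Delta_0$ that vary with $n$; passing $\big|\omega_n^\infty(E_n)/\omega_n^\infty(\Delta_0)-\sigma(E_n)/\sigma(\Delta_0)\big|\ge\delta_0$ to the limit would require something like $L^1(\Delta_0)$-convergence of the Poisson kernels $k_n$, which does not follow from uniform $A_\infty$ (that only gives uniform $RH_p$, hence weak $L^p$-compactness). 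De~Giorgi--Nash--Moser compactness gives local uniform convergence of solutions, but that is far from $L^1$-convergence of boundary densities.

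The paper avoids both issues by a completely different device: rather than taking a limit, it \emph{constructs} a single operator $A^\ast$ with vanishing weak DKP trace that nonetheless fails $A_\infty(\sigma,\delta_0/2)$ at an explicit sequence of scales, thereby contradicting Bortz--Toro--Zhao directly. Concretely, each bad $A_j$ is first modified to equal the identity outside a strip $\rho_j<t<M_j$ without destroying its defect (this is the content of Lemma~\ref{lem strip}, which in turn relies on Lemma~\ref{lem em_limit} about convergence of elliptic measures on Borel sets when coefficients agree on expanding strips and satisfy a uniform DKP bound); then the neutralized $A_j^\ast$ are rescaled and stacked at disjoint dyadic scales $\lambda_j\to 0$ to form $A^\ast$. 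The vanishing-trace property of $A^\ast$ is checked by hand, and the defect at scale $\lambda_j$ is inherited from $A_j$ via the scaling Lemma~\ref{lem rsc_eminfty}. No limiting argument on the sequence $A_n$ is ever taken.
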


\ms
As we shall see in Corollary \ref{cvmo}, 
we can replace the conclusion with the essentially equivalent fact that $\omega$ is absolutely continuous with respect to $\sigma = \H^d_{\vert \Gamma}$, and its density 
$ \frac{d\omega}{d\sigma}$ satisfies
\begin{equation} \label{1b17}
\norm{\log \frac{d\omega}{d\sigma}}_{BMO(\Gamma)}\le \delta.
\end{equation}

We also have analogous local results, where we consider the elliptic measure 
$\omega^X$ with a finite pole, but then need to restrict to small enough balls.

\begin{thm}\label{t1b2}
For every choice of $\delta > 0$ and $\kappa > 1$, we can find 
$\varepsilon = \varepsilon(\delta, d, \mu_0) > 0$
and $\tau = \tau(\delta, d, \mu_0, \kappa) \in (0,1)$ with the following properties.
Suppose $\Gamma \in CASSC(\varepsilon)$, and $L=-\divg{A\nabla}$ is an elliptic operator with 
ellipticity $\mu_0$ which satisfies the weak DKP condition with norm $\cN(A)<\varepsilon$.
Let $X \in \Omega$ be given, and denote by $\omega^X$ the elliptic measure on 
$\Gamma$ associated to $L$ and with pole at $X$. Then 
\begin{equation}\label{1b19} 
  \abs{\frac{\omega^{X}(E)}{\omega^{X}(\Delta)}-\frac{\sigma(E)}{\sigma(\Delta)}}<\delta
 \end{equation}
for every surface ball $\Delta = B(x,r) \cap \Gamma$ such that 
$\Delta \subset B(X,\kappa \dist(X,\Gamma))$ and $0 < r \leq \tau \dist(X,\Gamma)$,
and every Borel subset $E\subset \Delta$.
\end{thm}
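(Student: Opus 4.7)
The plan is to reduce Theorem \ref{t1b2} to Theorem \ref{t1b1} by a quantitative change of pole. Given $\delta > 0$, I would first apply Theorem \ref{t1b1} with the smaller threshold $\delta/2$ to pick $\varepsilon = \varepsilon(\delta, d, \mu_0) > 0$ so that, under the hypotheses of Theorem \ref{t1b2}, the pole-at-infinity elliptic measure $\omega^\infty$ satisfies
\begin{equation*}
\left|\frac{\omega^\infty(E)}{\omega^\infty(\Delta)} - \frac{\sigma(E)}{\sigma(\Delta)}\right| < \frac{\delta}{2}
\end{equation*}
for \emph{every} surface ball $\Delta \subset \Gamma$ and every Borel $E \subset \Delta$. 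It then remains to pick $\tau = \tau(\delta, d, \mu_0, \kappa) \in (0,1)$ small enough that, for every $\Delta = B(x_0,r) \cap \Gamma$ with $\Delta \subset B(X,\kappa\dist(X,\Gamma))$ and $r \leq \tau \dist(X,\Gamma)$,
\begin{equation*}
\sup_{E \subset \Delta} \left|\frac{\omega^X(E)}{\omega^X(\Delta)} - \frac{\omega^\infty(E)}{\omega^\infty(\Delta)}\right| < \frac{\delta}{2}.
\end{equation*}
Adding the two inequalities then yields \eqref{1b19}.

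For the finite-to-infinite pole comparison, I would invoke the boundary Harnack / comparison principle in the two-sided NTA domain $\Omega$. Enforcing $\tau < 1/(4\kappa)$ guarantees $X \notin B(x_0, 2r)$, so that $u_1(Y) := G_L(X,Y)$ and the Green function $u_2$ with pole at infinity (constructed later in the paper) are both positive $L$-solutions in $\Omega \cap B(x_0, 2r)$ that vanish continuously on $\Gamma \cap B(x_0, 2r)$. The classical boundary Hölder estimate for ratios of nonnegative $L$-solutions vanishing along the boundary of an NTA domain, available for divergence-form $\mu_0$-elliptic operators in the generality we need (cf.\ \cite{kenig2001dirichlet} and \cite{david1990lipschitz}), then produces exponents $\alpha = \alpha(d,\mu_0) > 0$ and $C = C(d,\mu_0)$ with
\begin{equation*}
\osc_{Y \in \Omega \cap B(x_0, r)} \log \frac{u_1(Y)}{u_2(Y)} \leq C \left(\frac{r}{\dist(X,\Gamma)}\right)^\alpha \leq C\tau^\alpha.
\end{equation*}
Passing to nontangential limits on $\Delta$ and invoking the jump formula for elliptic measures, this oscillation bound transfers to the Radon--Nikodym densities $k^X = d\omega^X/d\sigma$ and $k^\infty = d\omega^\infty/d\sigma$, producing a constant $c = c(X,\Delta,L) > 0$ with $\bigl|\log\bigl(k^X/(c k^\infty)\bigr)\bigr| \leq C \tau^\alpha$ for $\sigma$-a.e.\ $y \in \Delta$.

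A routine calculation (write each normalised ratio as a $\sigma$-average of the corresponding density over $E$ relative to $\Delta$, factor out $c$, and apply $|e^t - 1| \lesssim |t|$ for small $t$) converts the pointwise oscillation bound into
\begin{equation*}
\left|\frac{\omega^X(E)}{\omega^X(\Delta)} - \frac{\omega^\infty(E)}{\omega^\infty(\Delta)}\right| \leq C' \tau^\alpha
\end{equation*}
uniformly in $E \subset \Delta$, and choosing $\tau$ with $C' \tau^\alpha < \delta/2$ closes the argument. The main technical point, and the only place where $\kappa$ enters in addition to $\delta, d, \mu_0$, is the boundary Hölder regularity of the ratio $u_1/u_2$: one needs $X$ to stay at a definite distance from $B(x_0, 2r)$ so that $u_1$ and $u_2$ are comparable at corkscrew points of $\Delta$ and the $(r/\dist(X,\Gamma))^\alpha$ decay is genuinely small. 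This comparison principle is classical in the NTA setting and does not require any small-constant ingredient beyond Theorem \ref{t1b1}.
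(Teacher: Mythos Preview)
Your proposal is correct and follows essentially the same approach as the paper: apply Theorem~\ref{t1b1} with threshold $\delta/2$, then use a quantitative change-of-pole estimate (the paper packages this as Lemma~\ref{lem polediff}, which in turn rests on Lemma~\ref{lemBTZ kernel} from \cite{bortz2021elliptic} giving H\"older continuity of the kernel $H_\infty(X,z)=d\omega^X/d\omega^\infty$) to get $\bigl|\omega^X(E)/\omega^X(\Delta)-\omega^\infty(E)/\omega^\infty(\Delta)\bigr|\le C_\kappa\tau^\gamma$, and conclude by the triangle inequality. Your boundary Harnack sketch is exactly what underlies those lemmas.
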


Theorem \ref{t1b2} will be deduced from Theorem \ref{t1b1} 
at the end of Section~\ref{SGrHa}, and we will see in Corollary \ref{cvmo} 
that \eqref{1b19} can be replaced by the fact that $k = \frac{d\omega\SM{^X}}{d\sigma}$
is defined on $\Delta$, and $||\log(k)||_{BMO(\Delta)} \leq \delta$.

The rest of this paper is organized as follows. In Section~\ref{SGrHa}, we recall  definitions and some well-known properties of the elliptic measures and the Green function. We also deduce the local version (Theorem \ref{t1b2}) of Theorem \ref{t1b1} using Lemma \ref{lem polediff} that allows us to compare elliptic measures with different poles. In Section~\ref{Sexample}, we prove Theorem \ref{t1b1} for the upper half-space. In Section~\ref{SLip}, we define a change of variables that allows us to prove Theorem \ref{t1b1} for small Lipschitz graphs. In Section~\ref{Scassc}, we treat the $CASSC$ case, using the result for small Lispchitz graphs. In Section~\ref{Sbmo}, we define $BMO$ and $VMO$, and show the relations between $BMO$ (with a small semi-norm) and $A_\infty$ (with a small constant). In Section~\ref{sec emlimit_pf}, we give the proof of a lemma that we require in Section~\ref{Sexample}, which allows us to take the limit of elliptic measures.

\section{Green functions and harmonic measure with a pole at infinity}
\label{SGrHa}

Let $\Gamma \subset \R^{d+1}$ be a $CASSC$, and $L = -\div A \nabla$ 
be a divergence form elliptic operator on $\Omega$, one of the components of 
$\R^{d+1} \sm \Gamma$. In this section we only use the ellipticity of
$L$, the Ahlfors regularity of $\Gamma$, and the NTA character of $\Omega$
(see Section \ref{Scassc}), and recall the definition and basic properties of the 
Green function and ellipitc measure with a pole at infinity. 

We start with the $L$-elliptic measure with a pole in $\Omega$. Recall that 
for $X \in \Omega$, there exists a unique Borel probability measure $\omega^X = \omega^X_L$
on $\bdy\Omega$, such that for $f \in C_c^\infty(\bdy\Omega)$ the function
\[u(X) = \int_{\bdy\Omega} f(y) \, d\omega_L^X(y)\]
is the unique weak solution to the Dirichlet problem
\[(D)_L \begin{cases}
Lu = 0 \quad\text{in } \Omega, \\
u|_{\bdy\Omega} = f
\end{cases}\]
satisfying $u \in C(\overline{\Omega})$ and $u(X) \to 0$ as $|X| \to \infty$ in $\Omega$. We call 
$\omega^X_L$ the $L$-elliptic measure (or the elliptic measure corresponding to $L$) with pole at $X$.

We can also associate to $L$ a unique Green function in $\Omega$, 
$G_L(X,Y): \Omega \times \Omega \setminus  \text{diag}(\Omega) \to \Real$ 
with the following properties (see e.g. \cite{HOFMANN20176147}): 
$G_L(\cdot,Y)\in W^{1,2}_{\rm loc}(\Omega\setminus \{Y\})\cap C(\overline{\Omega}\setminus\{Y\})$, 
$G_L(\cdot,Y)\big|_{\bdy\Omega}\equiv 0$ for any $Y\in\Omega$, 
and $L G_L(\cdot,Y)=\delta_Y$ in the weak sense in $\Omega$, that is,
\begin{equation}\label{Greendef}
    \iint_\Omega A(X)\,\nabla_X G_{L}(X,Y) \cdot\nabla\vp(X)\, dX=\vp(Y), \qquad\text{for any }\vp \in C_c^\infty(\Omega).
\end{equation}
In particular, $G_L(\cdot,Y)$ is a weak solution to $L G_L(\cdot,Y)=0$ in $\Omega\setminus\{Y\}$. Moreover,
\begin{equation}\label{eq2.green}
G_L(X,Y) \leq C\,|X-Y|^{1-d}\quad\text{for }X,Y\in\Omega\,,
\end{equation}
\begin{equation}\label{eq2.green2}
c_\theta\,|X-Y|^{1-d}\leq G_L(X,Y)\,,\quad {\rm if } \,\,\,|X-Y|\leq \theta\, \dist(X,\bdy\Omega)\,, \,\, \theta \in (0,1)\,,
\end{equation}
\begin{equation}
\label{eq2.green3}
G_L(X,Y)\geq 0\,,\quad G_L(X,Y)=G_{L^\top}(Y,X), \qquad \text{for all } X,Y\in\Omega\,,\, 
X\neq Y.
\end{equation}
Here, and in the sequel, $L^\top$ is the 
adjoint of $L$ defined by 
$L^\top=-\divg A^\top\nabla$, where $A^\top$ denotes the transpose matrix of $A$.

We have the following Caffarelli-Fabes-Mortola-Salsa estimates 
(cf. \cite{CFMS81}, and \cite{jerison1982boundary} for NTA domains). 

\begin{lem} \label{lem cfms}
Let $\Omega$ be an NTA domain in $\ree$ and $L=-\divg{A\nabla}$ be a $\mu_0$-elliptic operator.
Let $x\in \bdy\Omega$, $0<r<\diam(\bdy\Omega)$. Then for
$X\in \Omega\setminus B(x,2r)$ we have
\begin{equation}\label{eq.CFMS}
\frac1C\omega_L^X(\Delta(x,r))
\leq
r^{d-1}G_L(X,X_{x,r}) \leq C \omega_L^X(\Delta(x,r)),
\end{equation}
where $X_{x,r}$ is a corkscrew point for $x$ at scale $r$ (see Definition \ref{tscs.def}).
The constant in \eqref{eq.CFMS} depends only on $\mu_0$,
dimension and on the constants in the NTA character.
\end{lem}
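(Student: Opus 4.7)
The statement is the classical Caffarelli-Fabes-Mortola-Salsa estimate extended to NTA domains by Jerison and Kenig \cite{jerison1982boundary}, and nothing from the weak-DKP or $CASSC$ machinery enters it: only ellipticity of $L$, Ahlfors regularity of $\Gamma$, and the two-sided NTA character of $\Omega$ are used. My plan is to reproduce that standard argument. The heart of the proof is a pair of maximum-principle comparisons, carried out in the region $U := \Omega \setminus \overline{B(x, 2r)}$, between the two nonnegative $L$-solutions
\begin{equation*}
u(Z) := \omega^Z_L(\Delta(x, r)) \qquad \text{and} \qquad v(Z) := r^{d-1} G_L(Z, X_{x,r}).
\end{equation*}
Both are $L$-harmonic in $U$ (note that $X_{x,r} \notin \overline{U}$), both vanish on $\partial \Omega \setminus \overline{B(x, 2r)} \subset \partial U$, and both tend to $0$ at infinity along $\Omega$.

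First I would establish two auxiliary bounds on the inner curved part $\Omega \cap \partial B(x, 2r)$ of $\partial U$. (i) A nondegeneracy lower bound for the harmonic measure: $u(Z) \ge c_0$ for all $Z \in \Omega \cap \partial B(x, 2r)$. The base case $\omega^{X_{x,r}}_L(\Delta(x, r)) \ge c$ is obtained by a barrier/H\"older-decay argument at $x$ that uses the existence of an exterior corkscrew for $\Omega$ at $x$ (which holds because $\Omega$ is two-sided NTA); the Harnack chain property of $\Omega$ then propagates this lower bound from $X_{x,r}$ to any $Z \in \Omega \cap \partial B(x, 2r)$. (ii) Two-sided pointwise bounds on $v$: $c_1 \le v(Z) \le C_1$ for $Z \in \Omega \cap \partial B(x, 2r)$, with $c_1, C_1$ depending only on $\mu_0$, $d$, and the NTA constants. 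The upper bound is immediate from \eqref{eq2.green} since $|Z - X_{x,r}| \simeq r$; the lower bound comes from \eqref{eq2.green2} applied to a Harnack-chain perturbation of $X_{x,r}$, together with Harnack for $G_L(\cdot, X_{x,r})$.

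With (i) and (ii) in hand, both halves of \eqref{eq.CFMS} reduce to the maximum principle. Choosing $C \ge C_1/c_0$, we have $Cu \ge v$ on $\Omega \cap \partial B(x, 2r)$, while $Cu \ge 0 = v$ on $\partial \Omega \setminus \overline{B(x, 2r)}$, and $Cu - v \to 0$ at infinity; since $Cu - v$ is $L$-harmonic in $U$, the maximum principle yields $v(X) \le C u(X)$ for all $X \in U$, which is the upper half of \eqref{eq.CFMS}. For the lower half, note that $u \le 1$ on $\Omega \cap \partial B(x, 2r)$ so $v \ge c_1 \ge c_1 u$ there, and the maximum principle applied to $v - c_1 u$ gives $v(X) \ge c_1 u(X)$ throughout $U$.

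The only step that is not entirely routine is the nondegeneracy (i); but once the quantitative boundary H\"older estimate for $L$-solutions vanishing on $\partial \Omega$ is available (standard on NTA domains), the classical barrier argument of \cite{CFMS81, jerison1982boundary} applies verbatim, and no new obstacle arises. The resulting constants depend only on $\mu_0$, $d$, and the NTA constants, as stated.
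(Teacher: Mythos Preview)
The paper does not prove this lemma; it is simply cited from \cite{CFMS81} and \cite{jerison1982boundary}, so there is no in-paper argument to compare against. Your outline follows the right overall strategy (maximum principle on $U=\Omega\setminus\overline{B(x,2r)}$), but there is a genuine gap: the uniform lower bounds asserted in (i) and in the lower half of (ii) are false.

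The sphere $\partial B(x,2r)$ meets $\partial\Omega$, so $\Omega\cap\partial B(x,2r)$ contains points $Z$ with $\dist(Z,\partial\Omega)$ arbitrarily small. Both $u=\omega_L^{\,\cdot}(\Delta(x,r))$ and $v=r^{d-1}G_L(\cdot,X_{x,r})$ vanish continuously on $\partial\Omega\setminus\overline{\Delta(x,r)}$, hence $u(Z),v(Z)\to 0$ there; neither $u\ge c_0$ nor $v\ge c_1$ can hold on the whole curved boundary. Your Harnack-chain justification fails exactly at this point: a chain from $X_{x,r}$ to such a $Z$ has length of order $\log\bigl(r/\dist(Z,\partial\Omega)\bigr)$, which is unbounded, so the resulting constant degenerates. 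The standard fix is to control the \emph{ratio} $u/v$ on $\Omega\cap\partial B(x,2r)$ rather than each function separately: near any $z\in\partial\Omega\cap\partial B(x,2r)$ both $u$ and $v$ are nonnegative $L$-solutions vanishing on $\Delta(z,r/2)$, so the boundary Harnack principle (Lemma~\ref{lcomparison}) gives $u(Z)/v(Z)\approx u(X')/v(X')$ for a nearby corkscrew point $X'$, and bounded Harnack chains then reduce everything to the single ratio $u(X_{x,2r})/v(X_{x,2r})\approx 1$ (Bourgain's estimate for $u$, and \eqref{eq2.green}--\eqref{eq2.green2} plus Harnack for $v$). With $u\approx v$ established on all of $\partial U$, the maximum principle on $U$ gives \eqref{eq.CFMS}.
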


\begin{lem}[Bourgain's estimate \cite{bourgain1987hausdorff, kenig1994harmonic}]\label{lem Bourgain}
Let $\Omega$ and $L$ be as in Lemma \ref{lem cfms}. Let $x\in\pom$ and $0<2r<\diam(\pom)$. Then 
\[
\omega_L^{X_{x,r}}(\Delta(x,r))\ge\frac{1}{C},
\]
where $C>1$ depends on $d$, $\mu_0$ and the NTA constant of $\om$.
\end{lem}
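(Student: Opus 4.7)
My plan is the classical barrier argument, based on boundary H\"older regularity of $L$-solutions at the Ahlfors regular set $\partial\Omega$ together with the Harnack chain property of the NTA domain $\Omega$. With $X_0 = X_{x,r}$ and $\Delta = \Delta(x,r)$, it is convenient to work with
\[
u(Y) := \omega^Y_L(\partial\Omega \setminus \Delta) = 1 - \omega^Y_L(\Delta),
\]
which is $L$-harmonic in $\Omega$, lies in $[0,1]$, and vanishes on $\Delta$. The bound $\omega^{X_0}_L(\Delta) \geq 1/C$ is equivalent to $u(X_0) \leq 1 - c$ for some $c > 0$ depending only on $d$, $\mu_0$, and the NTA character.

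The key input is a boundary H\"older estimate for $L$-solutions. Since $\partial\Omega$ is Ahlfors regular and $\Omega$ is NTA (so in particular $\R^{d+1}\setminus\overline\Omega$ satisfies a corkscrew condition at every scale near $\partial\Omega$), standard De Giorgi--Nash--Moser/Wiener-criterion arguments produce constants $\alpha = \alpha(d,\mu_0,\Omega) \in (0,1)$ and $C_1 = C_1(d,\mu_0,\Omega)$ such that, because $u$ vanishes on $\Delta(x,r/2) \subset \Delta$ and $0 \le u \le 1$,
\[
u(Y) \leq C_1\bigl(|Y-x|/r\bigr)^{\alpha} \quad\text{for every } Y \in \Omega \cap B(x,r/4).
\]

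Next, pick a deeper corkscrew $X'_0 := X_{x,\varepsilon r}$ at a smaller scale, with $\varepsilon = \varepsilon(C_1,\alpha) > 0$ so small that $C_1 \varepsilon^{\alpha} \leq 1/2$; then $|X'_0 - x| \leq \varepsilon r$ gives $u(X'_0) \leq 1/2$. To transfer this bound to the original corkscrew $X_0$, I would invoke the Harnack chain property of $\Omega$: the two points $X'_0$ and $X_0$ are joined by a chain of bounded length $N = N(\varepsilon,\Omega)$ of balls, each at controlled distance from $\partial\Omega$. Iterating interior Harnack for the non-negative $L$-harmonic function $1 - u$ along this chain yields $1 - u(X_0) \geq K^{-1}(1 - u(X'_0)) \geq 1/(2K)$ with $K = K(d,\mu_0,\varepsilon,\Omega)$, which is precisely $\omega^{X_0}_L(\Delta) \geq 1/(2K)$, as required.

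The main obstacle, and the technical heart of the proof, is establishing the boundary H\"older estimate with constants depending only on $d$, $\mu_0$, and the NTA/Ahlfors constants. This reduces to a quantitative Wiener criterion at each point of $\partial\Omega$, which follows by combining Caccioppoli/Moser iteration inside $\Omega$ with the exterior corkscrew condition of the NTA domain, ensuring that $\R^{d+1}\setminus\Omega$ captures a uniformly positive amount of capacity near each boundary point at every scale. Once the H\"older decay of $u$ is available, the remainder is a clean combination of one scale reduction and a bounded number of Harnack steps.
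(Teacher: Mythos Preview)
The paper does not supply its own proof of this lemma; it is simply quoted from \cite{bourgain1987hausdorff, kenig1994harmonic}. Your argument is correct and is one of the standard routes: boundary H\"older decay of $u=1-\omega^{\cdot}_L(\Delta)$ near $x$ (driven by the exterior corkscrew, hence the capacity density condition), followed by a Harnack chain from a deep corkscrew $X_{x,\varepsilon r}$ back to $X_{x,r}$.

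One remark on logical order. In many presentations (including Bourgain's original and Kenig's book), the inequality $\omega^Y_L(\Delta(x,r))\ge c$ for $Y\in\Omega\cap B(x,r/2)$ is obtained \emph{directly} from the capacity density condition via a single barrier/comparison with a capacitary potential, and boundary H\"older continuity is then \emph{derived} from it by iteration. Your argument runs in the opposite direction, invoking boundary H\"older regularity as the black box. This is not circular, since the H\"older estimate can be proved straight from the CDC (De Giorgi--Nash--Moser oscillation decay with the exterior thickness), but it does take a slightly longer path than necessary: you only need the first step of that iteration, not the full power-law decay. Either way, the dependence of constants is exactly as you claim.
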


\begin{lem}\label{lcomparison}
Let $\Omega$ be an NTA domain. There is a constant $M>1$, that depends on the dimension and the NTA constants for $\Omega$,
such that when $0 < Mr < \diam(\partial\Omega)$, and $u, v$ are non-trivial functions which vanish continuously
on $\Delta(x, Mr)$ for some $x\in\bdy\Omega$, $u, v \ge 0$ and $Lu = Lv = 0$ in $B(x, Mr)\cap\om$, then 
\[
\frac{u(X)}{v(X)} \approx \frac{u(X_{x,r})}{v(X_{x,r})}, \quad \text{for all } X \in B(x,r) \cap \om,
\]
where the implicit constants depend on $n$, $\mu_0$ and the NTA constants for $\om$.
\end{lem}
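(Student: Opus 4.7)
Lemma~\ref{lcomparison} is the classical boundary comparison principle (often called the boundary Harnack inequality) for non-negative $L$-solutions in NTA domains, proved in the harmonic case by Jerison-Kenig \cite{jerison1982boundary} and extended to divergence form elliptic operators by analogous arguments based on \cite{CFMS81} (see also \cite{kenig1994harmonic}). The plan is to reduce the lemma to the single-function representation
\begin{equation}\label{plan:key}
w(X) \approx w(X_{x,r})\, \omega_L^X(\Delta(x, r)) \quad \text{for every } X \in B(x, r) \cap \Omega,
\end{equation}
valid whenever $w \ge 0$, $Lw = 0$ in $B(x, Mr) \cap \Omega$, and $w$ vanishes continuously on $\Delta(x, Mr)$, with $M$ large in terms of the NTA character. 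Applying \eqref{plan:key} to both $u$ and $v$ and dividing cancels the common factor $\omega_L^X(\Delta(x, r))$ and yields the lemma immediately.

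The upper half of \eqref{plan:key} follows from the Carleson estimate $\sup_{B(x, 4r) \cap \Omega} w \lesssim w(X_{x, r})$ (itself a consequence of Harnack's inequality along Harnack chains in $\Omega$ combined with the maximum principle near $\partial\Omega$), coupled with a maximum principle comparison of $w$ with $C\, w(X_{x, r})\, \omega_L^{(\cdot)}(\Delta(x, 4r))$ on $\partial(T(x, 4r))$: on $\Delta(x, 4r)$ the function $w$ vanishes, while on $\partial B(x, 4r) \cap \Omega$ the Carleson estimate bounds $w$ from above and Lemma~\ref{lem Bourgain} bounds $\omega_L^{(\cdot)}(\Delta(x, 4r))$ from below. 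The doubling property of elliptic measure on NTA domains (a standard consequence of Bourgain's estimate and Harnack chains) then replaces $\Delta(x, 4r)$ by $\Delta(x, r)$.

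The lower half is the main obstacle and is where CFMS (Lemma~\ref{lem cfms}) enters decisively. When $\dist(X, \partial\Omega) \gtrsim r$ the desired bound $w(X) \gtrsim w(X_{x,r})\, \omega_L^X(\Delta(x, r))$ is immediate from Harnack chains and CFMS applied at scale $r$. When $X$ is close to $\partial\Omega$, pick $\xi \in \partial\Omega$ with $|X - \xi| = \dist(X, \partial\Omega) = \rho$ and iterate through the dyadic chain of surface balls $\Delta(\xi, 2^j \rho)$, $j = 0, \dots, \lceil \log_2(r/\rho) \rceil$. At each scale, CFMS compares $w$ to $\omega_L^{(\cdot)}(\Delta(\xi, 2^j \rho))\, w(X_{\xi, 2^j \rho})$, the Harnack chain condition of $\Omega$ transfers corkscrew values between consecutive scales, and at the top scale one matches $X_{\xi, 2^{j_0} \rho}$ to $X_{x, r}$. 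The delicate point is to run this descent uniformly in NTA (rather than Lipschitz) geometry; the Harnack chain condition of the NTA definition is precisely what keeps the implicit constants controlled through the chain, and the choice of $M > 1$ large (depending only on the NTA data) ensures that every auxiliary ball and corkscrew point used along the way stays inside $B(x, Mr) \cap \Omega$, where the hypotheses on $u$ and $v$ remain in force.
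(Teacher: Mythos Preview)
The paper does not prove Lemma~\ref{lcomparison}; it records the boundary comparison principle as a classical fact from \cite{jerison1982boundary} (building on \cite{CFMS81}; see also \cite{kenig1994harmonic}) and uses it as a black box. So there is no proof in the paper to compare against, but your sketch has a genuine gap that is worth flagging.

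The two--sided estimate \eqref{plan:key} is false as written: its lower half cannot hold for $X\in B(x,r)\cap\Omega$. Send $X$ to a point $\xi$ in the interior of $\Delta(x,r/2)$. Since $w$ vanishes continuously on $\Delta(x,Mr)$ you get $w(X)\to 0$, whereas $\omega_L^X(\Delta(x,r))\to 1$ (elliptic measure has boundary value $\mathbf 1_{\Delta(x,r)}$ at regular points, and every boundary point of an NTA domain is regular). With $w(X_{x,r})>0$ by Harnack, the bound $w(X)\gtrsim w(X_{x,r})\,\omega_L^X(\Delta(x,r))$ is violated near the boundary. Your dyadic iteration for the lower half is therefore aimed at a false target; note also that the step ``CFMS compares $w$ to $\omega_L^{(\cdot)}(\Delta(\xi,2^j\rho))\,w(X_{\xi,2^j\rho})$'' is not what Lemma~\ref{lem cfms} asserts (that lemma relates elliptic measure to the \emph{Green function}, not to a generic solution $w$). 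The upper half of \eqref{plan:key} \emph{is} correct and standard (Lemma~4.10 in \cite{jerison1982boundary}), but from $u(X)\le C\,u(X_{x,r})\,\omega_L^X(\Delta)$ and the same for $v$ you cannot divide to conclude anything. The classical route instead pairs the upper half for $u$ with a lower bound $v(X)\gtrsim v(X_{x,r})\,r^{d-1}G_L(X_0,X)/G_L(X_0,X_{x,r})$ against the Green function with a far pole $X_0$ (which, unlike $\omega_L^{(\cdot)}(\Delta)$, \emph{does} vanish on $\Delta(x,Mr)$); establishing that lower bound is the real work and requires the CFMS identity together with an iteration/doubling argument as in \cite{jerison1982boundary}, \cite{kenig1994harmonic}.
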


We now consider poles at infinity. With minor modifications only, 
one can prove the following lemma as in \cite{kenig1999free}, 
Lemma 3.7 and Corollary 3.2.

\begin{lem}\label{lem emGreen_infinity}
Let $L=-\divg{A\nabla}$ be an elliptic operator on $\Omega$ with Green function $G_L(X,Y)$. 
Then for any fixed $Z_0\in\Omega$, there exists a unique function $U\in C(\overline{\om})$ such that 
\[
\begin{cases}
L^\top U = 0 \quad \text{in } \Omega\\
U>0 \quad \text{in } \Omega\\
U=0 \quad \text{on }\bdy\Omega,\\
\end{cases}
\]
and $U(Z_0)=1$. In addition, for any sequence $\set{X_k}_k$ of points in $\Omega$
such that $\abs{X_k}\to \infty$, there exists a subsequence (which we still denote by 
$\set{X_k}_k$) such that if we set 
\[u_k(Y) := \frac{G_L(X_k, Y)}{G_L(X_k, Z_0)},\]
then 
\begin{equation}\label{Greeninfty_lmt}
    \lim_{k\to\infty}u_k(Y)=U(Y) \qquad\text{for any }Y\in\Omega.
\end{equation}
Moreover, there exists a unique locally finite positive Borel 
measure $\omega^\infty_L$ on $\bdy\Omega$ such that the Riesz formula 
\begin{equation}\label{Rieszinfty}
\int_{\bdy\Omega} f(y) \, d\omega^\infty_L(y) = - \iint A^\top(Y)\nabla_{Y}U(Y) \cdot \nabla_{Y} F(Y) \, dY 
\end{equation}
holds whenever $f \in C_c^\infty(\bdy\Omega)$ and $F \in C_c^\infty(\Omega)$ are such that $F|_{\bdy\Omega}=f$.
\end{lem}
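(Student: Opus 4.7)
I would follow the Kenig--Toro construction of the Green function with pole at infinity in \cite{kenig1999free}, adapted to our NTA setting. Set $u_k(Y):=G_L(X_k,Y)/G_L(X_k,Z_0)$; by \eqref{eq2.green3} each $u_k$ is a nonnegative $L^\top$-solution on $\Omega\setminus\{X_k\}$ vanishing continuously on $\bdy\Omega$, with $u_k(Z_0)=1$. For any compact $K\Subset\Omega$, we have $X_k\notin K$ eventually, and Harnack chains in $\Omega\setminus\{X_k\}$ connecting $Z_0$ to any point of $K$ give uniform two-sided bounds on $u_k$ over $K$; interior De Giorgi--Nash--Moser estimates then give equicontinuity. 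Arzelà--Ascoli plus a diagonal extraction yield a subsequence $u_k\to U$ locally uniformly on $\Omega$; the limit is a nonnegative weak $L^\top$-solution with $U(Z_0)=1$, hence $U>0$ by Harnack. The standard boundary Hölder estimate for nonnegative solutions vanishing on a boundary piece of an NTA domain---derived from Bourgain's Lemma~\ref{lem Bourgain} and an oscillation iteration---gives a modulus of continuity at $\bdy\Omega$ uniform in $k$, so $U\in C(\overline\Omega)$ with $U\equiv 0$ on $\bdy\Omega$.

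The delicate step is uniqueness. Given a second function $V$ satisfying the stated conditions with $V(Z_0)=1$, the boundary comparison principle (Lemma~\ref{lcomparison}), applied to the pair $(U,V)$ in $B(x,Mr)$ for every $x\in\bdy\Omega$ and every scale $r>0$, yields
\begin{equation*}
\frac{U(Y)}{V(Y)}\approx\frac{U(X_{x,r})}{V(X_{x,r})} \qquad \text{for } Y\in B(x,r)\cap\Omega,
\end{equation*}
with constants depending only on dimension, $\mu_0$, and the NTA character. A standard iteration of this oscillation control across all boundary points and scales (cf.~\cite{kenig1999free}) shows $\sup(U/V)=\inf(U/V)=:c$, so $U\equiv cV$, and the normalization at $Z_0$ forces $c=1$. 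Uniqueness of $U$ also upgrades the subsequential convergence $u_k\to U$ to convergence of the entire sequence.

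Finally, define $\nu_k:=\omega_L^{X_k}/G_L(X_k,Z_0)$. The CFMS estimate (Lemma~\ref{lem cfms}) gives, for $\Delta(x,r)$ with $X_k\notin B(x,2r)$,
\begin{equation*}
\nu_k(\Delta(x,r))\approx r^{d-1}u_k(X_{x,r})\longrightarrow r^{d-1}U(X_{x,r}),
\end{equation*}
which provides a uniform local bound on the $\nu_k$; after a further diagonal extraction, $\nu_k$ converges weakly-$*$ to a locally finite positive Borel measure $\omega_L^\infty$ on $\bdy\Omega$. To obtain \eqref{Rieszinfty}, take admissible $f$ and $F$, and apply the Green-type identity
\begin{equation*}
\iint A^\top(Y)\nabla_Y G_L(X_k,Y)\cdot\nabla F(Y)\,dY = F(X_k)-\int f\,d\omega_L^{X_k}.
\end{equation*}
Dividing by $G_L(X_k,Z_0)$ and passing to the limit, $F(X_k)/G_L(X_k,Z_0)\to 0$ (since $F$ has compact support and $|X_k|\to\infty$), the boundary term tends to $-\int f\,d\omega_L^\infty$, and the left-hand side converges to $\iint A^\top\nabla U\cdot\nabla F\,dY$ by dominated convergence, using Caccioppoli together with the local $L^\infty$ bounds on $u_k$ to obtain uniform $L^2$ bounds on $\nabla u_k$ over $\supp(\nabla F)$. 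Uniqueness of $\omega_L^\infty$ from \eqref{Rieszinfty} is then immediate. The principal obstacle throughout is the iteration in the uniqueness step: the boundary comparison operates at one scale at a time, and one must glue these local ratios consistently across the unbounded domain, using the polynomial decay of Green functions to control behavior at infinity.
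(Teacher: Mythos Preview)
Your proposal is correct and follows precisely the Kenig--Toro argument the paper itself invokes: the paper gives no independent proof of this lemma, only the remark ``with minor modifications only, one can prove the following lemma as in \cite{kenig1999free}, Lemma~3.7 and Corollary~3.2,'' and your sketch faithfully expands those details (compactness via Harnack and De~Giorgi--Nash--Moser, boundary H\"older decay, uniqueness by iterated comparison, and the Riesz formula via CFMS and the Green identity).
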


\begin{defn}[Elliptic measure and Green function with pole at infinity]
Let $L$, $Z_0$, $U$ and $\omega_L^\infty$ be as in Lemma \ref{lem emGreen_infinity}. 
We call $\omega^\infty_L$ the elliptic measure with pole at infinity (normalized at $Z_0$), and $U$ the Green function with pole at infinity (normalized at $Z_0$). 
\end{defn}

We will systematically fix $Z_0$ at the beginning (and take $Z_0=(0,1)$ when $\Omega=\reu$),
so we drop the term ``normalized at $Z_0$" for the elliptic measure and the Green function with pole at infinity.  

Now we state estimates that allow to go from poles $X \in \Omega$ to poles at 
infinity. Here there is a single operator $L$, so we drop $L$ from the notation, and 
for instance write $\omega^X$ for $\omega_L^X$. 
Recall that the different measures $\omega^X$
are absolutely continuous with respect to each other, as a consequence of Harnack's 
inequality. Similarly, $\omega^X$ and $\omega^\infty$ are mutually absolutely continuous
too, as a result of the local H\^older continuity of solutions near $\Gamma$, and we even
have the estimates stated below on the density, which we take from \cite{bortz2021elliptic}.
For $X \in \Omega$, call 
\begin{equation}\label{def Hinfty}
    H_\infty(X,z):=\frac{d\omega^{X}}{d\omega^\infty}(z)
\end{equation}
the Radon-Nykodym density of $\omega^{X}$ with respect to $\omega^\infty$, 
evaluated at $z\in \Gamma$; the existence comes with the next lemma.

\begin{lem}[\cite{bortz2021elliptic}, Lemma 5.8]\label{lemBTZ kernel}
Let $\Gamma$, $\Omega$, and $L$ be as above; in particular $\Gamma \in CASSC(\varepsilon)$
for a small enough $\varepsilon > 0$, and $L-\divg{A\nabla}$ is an elliptic operator with ellipticity $\mu_0$. 
Given $X\in \Omega$, set $\delta(X) = \dist(X,\Gamma)$ and 
pick a point $X_1 \in \Omega \cap B(X, \dist(X,\Gamma))$
such that $\dist(X_1,\Gamma) = \delta(X)/4$. 
The density $H_\infty(X,z)$ of \eqref{def Hinfty} exists and is locally H\"older continuous  
of order $\gamma = \gamma(d,\mu_0)$. 
Moreover, for every $\kappa > 1$ there exists $C_\kappa = C_\kappa(\kappa, d, \mu_0)$ 
such that
\begin{equation}\label{HCinfkern.eq}
|H_\infty(X, z) - H_\infty(X, z')| 
\le C_\kappa \frac{G_L(X, X_1)}{U(X)} \left(\frac{|z - z'|}{\delta(X)}\right)^\gamma 
\end{equation}
for all $z,z' \in \Gamma \cap B(X,6\kappa \delta(X))$, $|z - z'| < \delta(X)/4$ and
\begin{equation}\label{compinfkern.eq}
(C_\kappa)^{-1} \frac{G_L(X, X_1)}{U(X)} \le H_\infty(X, z) 
\le C_\kappa \frac{G_L(X, X_1)}{U(X)}
\end{equation}
for all $z \in \Gamma \cap B(X,6\kappa \delta(X))$. 
Here $U$ is the Green function for $L$ with pole at infinity. 
\end{lem}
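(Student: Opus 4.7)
The plan is to realize the density $H_\infty(X,z)$ as the boundary trace of the ratio of two positive $L^\top$-solutions, namely $G_L(X,\cdot)$ and the Green function with pole at infinity $U(\cdot)$, and then deduce the three stated estimates by combining the Caffarelli--Fabes--Mortola--Salsa estimate (Lemma \ref{lem cfms}) with the boundary comparison principle (Lemma \ref{lcomparison}).

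First I would establish a pole-at-infinity analogue of CFMS, namely
\[
\omega^\infty(\Delta(z,r)) \approx r^{d-1}\, U(X_{z,r})
\]
for $z\in\Gamma$ and $r$ not too large, where $X_{z,r}$ is a corkscrew point at scale $r$. This is derived by taking $X=X_k$ in Lemma \ref{lem cfms}, dividing by $G_L(X_k,Z_0)$, and passing to the limit using Lemma \ref{lem emGreen_infinity}; the CFMS constants depend only on $d$, $\mu_0$ and the NTA character and so survive the limit. Combined with the finite-pole estimate $\omega^X(\Delta(z,r)) \approx r^{d-1} G_L(X,X_{z,r})$, this gives
\[
\frac{\omega^X(\Delta(z,r))}{\omega^\infty(\Delta(z,r))} \approx \frac{G_L(X,X_{z,r})}{U(X_{z,r})},
\]
identifying the candidate density as $H_\infty(X,z) = \lim_{r\to 0} G_L(X,X_{z,r})/U(X_{z,r})$, with existence of the limit coming from the next step.

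Next, for $z \in \Gamma \cap B(X, 6\kappa\delta(X))$ and $r \ll \delta(X)$, both $G_L(X,\cdot)$ and $U(\cdot)$ are positive $L^\top$-solutions vanishing on $\Gamma\cap B(z, Mr)$, so Lemma \ref{lcomparison} shows that $G_L(X,Y)/U(Y)$ is essentially constant on $B(z,r)\cap\Omega$ and therefore converges as $r\to 0$. A Harnack chain of length controlled by $\kappa$ from a corkscrew point near $X$ to $X_1$, and from there to $X_{z,r}$ via Lemma \ref{lcomparison}, then yields the two-sided bound \eqref{compinfkern.eq}. For the local H\"older continuity \eqref{HCinfkern.eq}, one iterates Lemma \ref{lcomparison} on a dyadic sequence of shrinking surface balls centered between $z$ and $z'$: each step contracts the oscillation of $G_L(X,\cdot)/U(\cdot)$ by a fixed factor $\theta<1$, producing H\"older regularity with exponent $\gamma=\gamma(d,\mu_0)$ determined only by this contraction rate; multiplying through by the upper bound from \eqref{compinfkern.eq} gives the claimed prefactor $G_L(X,X_1)/U(X)$.

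The main obstacle is the first step. The approximating functions $u_k = G_L(X_k,\cdot)/G_L(X_k,Z_0)$ are renormalized Green functions rather than elliptic measures, so the pole-at-infinity CFMS estimate is not literally inherited from Lemma \ref{lem cfms}. One must apply CFMS with pole $X_k$, divide by $G_L(X_k,Z_0)$, and carefully use the Riesz formula \eqref{Rieszinfty} together with local H\"older continuity to pass the resulting comparison through the weak-$\ast$ limit, keeping all constants uniform in $k$. Once this is in hand, the remainder reduces to the by-now standard boundary Harnack/H\"older machinery on NTA domains.
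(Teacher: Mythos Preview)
The paper does not actually prove this lemma: it is quoted from \cite{bortz2021elliptic}, Lemma 5.8, with only the remark that the proof given there for $\Gamma=\mathbb{R}^d$ goes through in the $CASSC$ setting. So there is no ``paper's own proof'' to compare against here.

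That said, your outline is the correct and standard one, and is essentially what is carried out in \cite{bortz2021elliptic} (and earlier in the NTA literature going back to Jerison--Kenig and Caffarelli--Fabes--Mortola--Salsa). The key ingredients you identify --- a CFMS estimate with pole at infinity obtained by passing Lemma~\ref{lem cfms} to the limit via Lemma~\ref{lem emGreen_infinity}, the identification of $H_\infty(X,z)$ as the boundary trace of the ratio $G_L(X,\cdot)/U(\cdot)$ of two positive $L^\top$-solutions, the two-sided bound from the comparison principle plus a Harnack chain of length $O_\kappa(1)$, and the H\"older continuity from iterating the boundary Harnack oscillation decay --- are exactly right. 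Your flagged ``obstacle'' (uniformity in $k$ when passing the CFMS comparison to the limit) is genuine but mild: the constants in Lemma~\ref{lem cfms} depend only on $d$, $\mu_0$, and the NTA character, not on the pole, so the inequality survives the limit directly.
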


In fact this lemma is proved in \cite{bortz2021elliptic} in the special case when $\Gamma = \R^d$,
but the proof goes through. The existence of $X_1 \in \Omega \cap B(X, \dist(X,\Gamma))$
such that $\dist(X_1,\Gamma) = \delta(X)/4$ is easy because $\Gamma$ is a $CASSC$,
but a nearby corkscrew point (not so close to $X$) would do the job in a more general setting.
Notice also that we will only use the case when $\Gamma \neq \R^d$ to deduce
Theorem \ref{t1b2} from Theorem \ref{t1b1} at the end of the section.

We now use the estimates \eqref{HCinfkern.eq} and \eqref{compinfkern.eq} to  
compare $\omega^{X_0}(E)$ and $\omega^\infty(E)$.  

\begin{lem}\label{lem polediff}
Ler $\Gamma$, $\Omega$, $L$, and $X \in \Omega$ be as in Lemma \ref{lemBTZ kernel}
Let $L=-\divg{A\nabla}$ be an elliptic operator with ellipticity $\mu_0$.  
Then for any $\kappa>1$, for $x\in \Gamma \cap B(X,5\kappa \delta(X))$ and 
$0 < r \leq \delta(X)/4$, and any Borel set $E\subset \Delta = \Gamma \cap B(x,r)$,
there holds
\begin{equation}\label{eq polediff0}
    \abs{\frac{\omega^{X}(E)}{\omega^{X}(\Delta)}-\frac{\omega^{\infty}(E)}{\omega^{\infty}(\Delta)}}\le C_\kappa \br{\frac{r}{\delta(X)}}^\gamma,
\end{equation}
where $C_\kappa=C_\kappa(\kappa, d, \mu_0)$ and $\gamma=\gamma(d,\mu_0)$.
\end{lem}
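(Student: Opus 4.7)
The plan is to reduce the lemma to the two statements already packaged in Lemma \ref{lemBTZ kernel}: the local H\"older continuity of the Radon--Nikodym density $H_\infty(X,\cdot) = d\omega^X/d\omega^\infty$ on $\Delta$, and its two-sided comparability to the constant $G_L(X,X_1)/U(X)$. On the small surface ball $\Delta$ the density will be essentially constant, so $\omega^X$ will be approximately a constant multiple of $\omega^\infty$ there, and the two ratios in \eqref{eq polediff0} will be close.

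First I would check that Lemma \ref{lemBTZ kernel} applies on all of $\Delta$. Since $x \in B(X,5\kappa\,\delta(X))$ and $r \le \delta(X)/4 \le \kappa\,\delta(X)$, every $z \in \Delta$ lies in $\Gamma \cap B(X, 6\kappa\,\delta(X))$, and any two points $z, z' \in \Delta$ satisfy $|z-z'| \le 2r \le \delta(X)/2$, which is within the range allowed by \eqref{HCinfkern.eq} up to a harmless absolute constant (one can always split the segment from $z$ to $z'$ into two pieces of length $<\delta(X)/4$ and apply \eqref{HCinfkern.eq} to each). Combining \eqref{HCinfkern.eq} with the lower bound in \eqref{compinfkern.eq}, I would then write, for every $z \in \Delta$,
\[
H_\infty(X,z) = H_\infty(X,x)\,(1 + \eta(z)), \qquad |\eta(z)| \le \widetilde C_\kappa \left(\frac{r}{\delta(X)}\right)^{\!\gamma} =: \eps_0,
\]
where $\widetilde C_\kappa$ depends only on $\kappa, d, \mu_0$.

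The next step is the ratio computation. Setting $a = \omega^\infty(E)$, $b = \omega^\infty(\Delta)$, and $h = H_\infty(X,x)$, and integrating the identity $\omega^X(F) = \int_F H_\infty(X,z)\,d\omega^\infty(z)$ over $F = E$ and $F = \Delta$, one obtains
\[
\omega^X(E) = h\,a\,(1+\rho_E), \qquad \omega^X(\Delta) = h\,b\,(1+\rho_\Delta),
\]
with $|\rho_E|, |\rho_\Delta| \le \eps_0$. When $\eps_0 \le 1/2$, an elementary manipulation gives
\[
\left|\frac{\omega^X(E)}{\omega^X(\Delta)} - \frac{a}{b}\right| = \frac{a}{b}\left|\frac{1+\rho_E}{1+\rho_\Delta} - 1\right| \le \frac{2\eps_0}{1-\eps_0} \le 4\eps_0,
\]
which is \eqref{eq polediff0}. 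In the complementary regime $\eps_0 > 1/2$, the bound is trivial because both ratios lie in $[0,1]$, so the left-hand side of \eqref{eq polediff0} is bounded by $1 \le 2\eps_0$, and one can absorb this into the final constant $C_\kappa$.

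I do not anticipate any serious obstacle: the entire analytic content is concentrated in Lemma \ref{lemBTZ kernel}, and what remains is a one-line identity for the ratio of two numbers of the form $h\,s\,(1+\rho)$. The only minor points to watch are (i) the bookkeeping with $\kappa$ to guarantee $\Delta \subset \Gamma \cap B(X,6\kappa\,\delta(X))$, and (ii) the formal threshold $\delta(X)/4$ in \eqref{HCinfkern.eq}, handled by chaining as noted above.
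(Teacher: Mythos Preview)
Your proposal is correct and follows essentially the same route as the paper: both arguments reduce everything to Lemma \ref{lemBTZ kernel}, combining the H\"older estimate \eqref{HCinfkern.eq} with the lower bound \eqref{compinfkern.eq} to conclude that $H_\infty(X,\cdot)$ is nearly constant on $\Delta$, and then do an elementary ratio computation. The paper phrases the latter via $\sup/\inf$ of $H_\infty$ over $\Delta$ to reach the bound $\sup_{z,z'\in\Delta}\big|\tfrac{H_\infty(X,z)}{H_\infty(X,z')}-1\big|$, while you parametrize $H_\infty(X,z)=H_\infty(X,x)(1+\eta(z))$; these are the same estimate in different clothing, and your explicit handling of the $\delta(X)/4$ threshold by chaining and of the regime $\eps_0>1/2$ is if anything slightly more careful than the paper's.
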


\begin{proof}
By the definition \eqref{def Hinfty} of $H_\infty$, 
\[
\frac{\omega^{X}(E)}{\omega^{X_0}(\Delta)}
=\frac{\int_EH_\infty(X,z) d\omega^\infty(z)}{\int_\Delta H_\infty(X,z)d\omega^\infty(z)}.
\] 
Next 
\begin{equation*}
 \frac{\underset{z\in E}{\inf}H_\infty(X,z)}{\underset{z\in \Delta}{\sup}H_\infty(X,z)} \,\frac{\omega^\infty(E)}{\omega^\infty(\Delta)}\le \frac{\omega^{X}(E)}{\omega^{X}(\Delta)}\le \frac{\underset{z\in E}{\sup}H_\infty(X,z)}{\underset{z\in \Delta}{\inf}H_\infty(X,z)} \,\frac{\omega^\infty(E)}{\omega^\infty(\Delta)},
\end{equation*}
which implies that 
\begin{multline}\label{eq polediff}
    \br{\frac{\underset{z\in E}{\inf}H_\infty(X,z)}{\underset{z\in \Delta}{\sup}H_\infty(X,z)}-1}\frac{\omega^\infty(E)}{\omega^\infty(\Delta)}
    \le \frac{\omega^{X}(E)}{\omega^{X}(\Delta)}-\frac{\omega^\infty(E)}
    {\omega^\infty(\Delta)}\\
    \le \br{\frac{\underset{z\in E}{\sup}H_\infty(X,z)}{\underset{z\in \Delta}{\inf}H_\infty(X,z)}-1}\frac{\omega^\infty(E)}{\omega^\infty(\Delta)}.
\end{multline}
Notice that
\[
 \br{\frac{\underset{z\in E}{\sup}H_\infty(X,z)}{\underset{z\in \Delta}{\inf}H_\infty(X,z)}-1}\frac{\omega_L^\infty(E)}{\omega_L^\infty(\Delta)}
 \le\frac{\underset{z\in \Delta}{\sup}H_\infty(X,z)}{\underset{z\in \Delta}{\inf}H_\infty(X,z)}-1,
 \]
 and that 
 \[
 \frac{\underset{z\in E}{\inf}H_\infty(X,z)}{\underset{z\in \Delta}{\sup}H_\infty(X,z)}-1\le\br{\frac{\underset{z\in E}{\inf}H_\infty(X,z)}{\underset{z\in \Delta}{\sup}H_\infty(X,z)}-1}\frac{\omega_L^\infty(E)}{\omega_L^\infty(\Delta)}\le0,
 \]
 so \eqref{eq polediff} gives that 
 \begin{equation}\label{eq polediffH}
     \abs{\frac{\omega_L^{X}(E)}{\omega^{X}(\Delta)}-\frac{\omega^\infty(E)}{\omega^\infty(\Delta)}}
     \le\sup_{z,z'\in \Delta}\abs{\frac{H_\infty(X,z)}{H_\infty(X,z')}-1}.
 \end{equation}
But Lemma \ref{lemBTZ kernel} says that
for all $z,z' \in \Gamma \cap B(X,6\kappa \delta(X))$ such that $|z - z'| < \frac{\delta(X)}{4}$,
\[
\abs{\frac{H_\infty(X,z)}{H_\infty(X,z')}-1}=\frac{\abs{H_\infty(X,z)-H_\infty(X,z')}}{H_\infty(X,z')}\le C_\kappa\br{\frac{\abs{z-z'}}{\delta(X)}}^\gamma.
\]
Then the desired estimate \eqref{eq polediff0} follows from \eqref{eq polediffH} and our assumptions on $\Delta$. 
\end{proof}
\ms

We now use Lemma \ref{lem polediff} to deduce Theorem \ref{t1b2} from Theorem \ref{t1b1}.

\begin{proof}[Proof of Theorem \ref{t1b2} given Theorem \ref{t1b1}]
Let $\Gamma$, $\Omega$, and $L$ be as in both theorems. By Theorem \ref{t1b1}
we can find $\varepsilon >0$, depending on $\delta$, $d$, and $\mu_0$,
such that if $\Gamma \in CASSC(\varepsilon)$ and $\cN(A)<\varepsilon$,
we have 
\[
 \abs{\frac{\omega^{\infty}(E)}{\omega^{\infty}(\Delta)}-\frac{\sigma(E)}{\sigma(\Delta)}}<\frac{\delta}{2}
\]
for any surface ball $\Delta \subset \Gamma$ and any Borel set $E \subset \Delta$.
We now have to replace $\omega^{\infty}$ with $\omega^{X}$. Let 
$\Delta = \Gamma \cap B(x,r)$ be such that $\Delta \leq B(X,\kappa \delta(X))$
and $0 < r < \tau \delta(X)$ be as in the statement. Then we can apply 
Lemma \ref{lem polediff}, and we get \eqref{eq polediff0}, i.e., 
\[
\abs{\frac{\omega^{X}(E)}{\omega^{X}(\Delta)}-\frac{\omega^{\infty}(E)}{\omega^{\infty}(\Delta)}}\le C_\kappa \br{\frac{r}{\delta(X)}}^\gamma \leq C_\kappa \tau^\gamma.
\]
We now choose $\tau$ so small, depending on $\delta$, $d$, $\mu_0$, and $\kappa$,
that $C_\kappa \tau^\gamma \leq \delta/2$, and \eqref{1b19} follows from triangle inequality.
\end{proof}

\section{Construction of an example in $\reu$}
\label{Sexample}

In this section we prove Theorem \ref{t1b1} in the special case when $\Omega = \reu$.
As was said in the introduction, we shall assume that the theorem fails, and get the desired contradiction by constructing an operator $L$ that satisfies the assumption of the main theorem
of \cite{bortz2021elliptic}, but not the conclusion.
We recall the main theorem of \cite{bortz2021elliptic}. 
\begin{lem}[\cite{bortz2021elliptic}, Theorem 1.2]\label{BTZthm}
Let $L = -\divg A\nabla$ be a divergence form elliptic operator on $\reu$, whose coefficient matrix $A$ satisfies the vanishing weak DKP condition. If $k^\infty_L$ is the elliptic kernel associated to $L$ (in $\reu$) with pole at infinity then $\log k_L^\infty \in VMO(\rd)$ (see \eqref{6a3} for the definiton of $VMO$).
\end{lem}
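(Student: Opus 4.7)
My plan follows the strategy sketched in the introduction: combine the Green function estimates of \cite{david2021carleson} with the vanishing-trace refinement, due to Korey \cite{korey1998carleson}, of the Fefferman-Kenig-Pipher theorem \cite{fefferman1991theory} relating Carleson control of $|\nabla v|^2\delta\,dX$ to $BMO/VMO$ boundary regularity.

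First I would reduce the problem to a statement about the Green function with pole at infinity. Let $U$ be the Green function for $L^\top$ on $\reu$ with pole at infinity, normalized so that $U(Z_0)=1$ with $Z_0=(0,1)$. Combining the Caffarelli-Fabes-Mortola-Salsa estimate \eqref{eq.CFMS} applied to $G_L(X,\cdot)$ (letting $|X|\to\infty$ and passing to the normalized limit) together with Lemma \ref{lemBTZ kernel}, one obtains the non-tangential comparison
\[
k^\infty_L(x) \simeq \frac{U(x,t)}{t},\qquad 0<t\le 1,\ x\in\rd,
\]
with constants depending only on $d$ and $\mu_0$. Hence it suffices to show that $\phi(Y):=\log\bigl(U(Y)/\delta(Y)\bigr)$, where $\delta(Y)=\dist(Y,\rd)$, admits a non-tangential boundary trace lying in $VMO(\rd)$, since that trace equals $\log k^\infty_L$ up to a bounded function.

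Next I would extract Carleson control on $\nabla\phi$ from \cite{david2021carleson}. The main quantitative output of that paper, applied to the normalized Green function $U$, states that under the weak DKP condition the density $|\nabla\phi(Y)|^2\,\delta(Y)\,dY$ is a Carleson measure on $\reu$, with Carleson norm dominated by $\cN(A)$ plus lower-order terms. Under the \emph{vanishing} weak DKP hypothesis one must upgrade this to a vanishing trace, i.e.\
\[
\lim_{r_0\to 0}\ \sup_{0<r\le r_0}\ \sup_{x\in\rd}\ r^{-d}\iint_{T(x,r)}|\nabla\phi(Y)|^2\,\delta(Y)\,dY \ =\ 0.
\]
This is plausible because the estimate in \cite{david2021carleson} is produced by an approximation argument comparing $L$ on each Whitney region to a constant-coefficient operator, so the mass of $|\nabla\phi|^2\delta$ on a box $T(x,r)$ is controlled by the Carleson mass of $\alpha_A(y,s)^2\,\tfrac{d\sigma(y)\,ds}{s}$ on boxes of comparable size, modulo harmless error terms.

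Finally I would invoke Korey's vanishing Carleson lemma \cite{korey1998carleson}: if $\phi\in W^{1,2}_{\mathrm{loc}}(\reu)$ and $|\nabla\phi|^2\delta\,dY$ is a Carleson measure with vanishing trace, then the non-tangential limit of $\phi$ on $\rd$ exists a.e.\ and lies in $VMO(\rd)$. Applied to our $\phi$, this gives $\log k^\infty_L\in VMO(\rd)$ by Step 1. The main obstacle is the vanishing-trace upgrade in Step 2: the DFM proof is an absorption argument that couples all scales, so one needs careful bookkeeping to verify that the local estimate on $T(x,r)$ truly uses only the Carleson mass of $\alpha_A^2\tfrac{d\sigma\,dr}{r}$ on boxes of comparable or slightly larger size, plus errors that vanish as $r\to 0$. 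As noted in the introduction, an alternative route avoiding Korey is to deduce $\log k^\infty_L\in VMO$ via the more recent weight-theoretic results of \cite{bortz2021theorem}, but the DFM$+$Korey route is the one originally used in \cite{bortz2021elliptic}.
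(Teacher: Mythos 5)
This statement is not proved in the paper at all: it is imported verbatim as \cite{bortz2021elliptic}, Theorem 1.2, so there is no internal argument to compare yours against. Your outline does match the strategy that the introduction attributes to \cite{bortz2021elliptic} (Green function estimates from \cite{david2021carleson}, then a vanishing-constant weight lemma of Korey \cite{korey1998carleson} refining \cite{fefferman1991theory}), but as a proof it has genuine gaps, and they sit exactly where the real work is. Your Step 2 (upgrading the Carleson estimate for $|\nabla\phi|^2\delta\,dY$ to a vanishing trace under the vanishing weak DKP hypothesis) is only asserted to be ``plausible''; this is a quantitative, all-scales absorption argument in \cite{david2021carleson} and the localization you invoke has to be checked, not presumed. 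Note also that the comparison $k^\infty_L(x)\simeq U(x,t)/t$ with constants depending only on $d,\mu_0$ is not a consequence of Lemma \ref{lemBTZ kernel} and \eqref{eq.CFMS} alone; it already uses the doubling/$A_\infty$ information for $\omega^\infty_L$ (Lemma \ref{lem DKPtoAinfty}), which must be put in place first.

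The more serious problem is Step 3. What you call ``Korey's vanishing Carleson lemma'' --- that any $\phi\in W^{1,2}_{\rm loc}(\reu)$ with $|\nabla\phi|^2\delta\,dY$ a vanishing-trace Carleson measure has an a.e.\ non-tangential trace in $VMO(\rd)$ --- is not Korey's theorem, and as stated it is false. Take $\phi(x,t)=\sin\bigl(\log\log(1/t)\bigr)$ for small $t$ (extended smoothly); then
\begin{equation*}
|\nabla\phi(x,t)|^2\,t \le \frac{1}{t\,(\log(1/t))^{2}},\qquad
\int_0^r \frac{dt}{t(\log(1/t))^{2}} = \frac{1}{\log(1/r)}\longrightarrow 0,
\end{equation*}
so the Carleson condition holds with vanishing trace, yet $\phi(x,t)$ has no limit as $t\to0$ at any $x$. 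Korey's result is a statement about doubling weights on $\rd$ (a vanishing-constant variant of the Fefferman--Kenig--Pipher criterion), and the passage from the Green-function Carleson estimate to $\log k^\infty_L\in VMO$ must exploit that $U$ is a positive solution and that $k^\infty_L$ is an $A_\infty$ weight; this transfer is precisely the delicate step, as the present paper itself signals when it remarks that the small-constant analogue ``is not as easy to deduce'' from \cite{fefferman1991theory} and \cite{korey1998carleson} (compare \cite{bortz2021theorem}). So your sketch is the right road map, but Steps 2 and 3 are the theorem, not corollaries of cited results.
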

The authors of \cite{bortz2021elliptic} also show the big constant version of the result above.
\begin{lem}[\cite{bortz2021elliptic}, Theorem 5.2]\label{lem DKPtoAinfty}
Let $L = -\divg A\nabla$ be a divergence form elliptic operator on $\reu$, whose coefficient matrix $A$ satisfies the weak DKP condition. Let $\omega^\infty_L$ be the elliptic measure with pole at infinity. Then $\omega^\infty_L \ll \mathcal{L}^d$, where $\mathcal{L}^d$ is the Lebesgue measure on $\rd$, $\omega_L^\infty \in A_\infty(dx)$, and $k^\infty_L(y) := \tfrac{d\omega^\infty_L}{dx}(y)$ has the property that $\log k^\infty_L \in BMO(\rd)$. The implicit constants in the statements $\omega_L^\infty \in  A_\infty(dx)$ and $\log k \in BMO(\rd)$ are each bounded by a constant depending on $d$, 
the ellipticity constant $\mu_0$, 
and 
$\left\|\alpha_A(x,r)^2 \, \frac{dx\, dr}{r}\right\|_{\mathcal{C}}$.
\end{lem}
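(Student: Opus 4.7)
The plan is to reduce the statement, via Green function estimates, to a Fefferman--Kenig--Pipher style weight theorem. Fix the normalization $Z_0 = (0,1)$ and let $U$ denote the Green function for $L$ on $\reu$ with pole at infinity normalized at $Z_0$. The first and central task is to show that $U$ is comparable to the coordinate function $t = x_{d+1}$, in a very strong quantitative sense. Specifically, I would invoke the Green function estimates of \cite{david2021carleson} which are tailored precisely to the weak DKP condition: under the hypothesis $\mathcal{N}(A) < \infty$ one obtains
\begin{equation*}
c_1 \, t \le U(x,t) \le c_2 \, t \qquad \text{for all } (x,t) \in \reu,
\end{equation*}
together with the stronger \emph{Carleson oscillation bound}
\begin{equation*}
d\nu(x,t) := \left| \nabla \log\frac{U(x,t)}{t} \right|^2 t\, dx\, dt \in \mathcal{C}(\rd \times (0,\infty)),
\end{equation*}
with $\|\nu\|_{\mathcal{C}}$ controlled by $d$, $\mu_0$, and $\|\alpha_A^2 \tfrac{dx\,dr}{r}\|_{\mathcal{C}}$. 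This is the main technical input; the rest is more or less standard.

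Step two is to translate this information on $U$ into information on the Poisson kernel $k^\infty_L$. Since $\reu$ is as nice as can be, the CFMS estimate (Lemma~\ref{lem cfms}), together with the fact that $U$ vanishes linearly at the boundary (a consequence of the comparability $U \approx t$ and boundary H\"older continuity), gives
\begin{equation*}
k_L^\infty(x) \approx \frac{U(x,t)}{t}\bigg|_{t = t_0} \quad \text{in an averaged sense over Whitney cubes,}
\end{equation*}
so that, up to multiplicative constants, $\log k_L^\infty(x)$ can be recovered as a nontangential trace of $\log(U/t)$.

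The third step is the weight theorem: the Carleson oscillation bound on $\log(U/t)$ implies $\log k^\infty_L \in \mathrm{BMO}(\rd)$ with norm controlled by $\|\nu\|_{\mathcal{C}}$. This is the statement at the level of weights proved by Fefferman--Kenig--Pipher \cite{fefferman1991theory}; concretely, one writes the oscillation of $\log k^\infty_L$ over a surface ball $\Delta(x_0,r_0)$ as a boundary integral of $\log(U/t)$, dualizes against an $H^1$-atom, and absorbs the $t$-derivative contributions via a square-function/Carleson measure estimate using $\nu$. Once $\log k^\infty_L \in \mathrm{BMO}$, the John--Nirenberg theorem and the classical Muckenhoupt/Coifman--Fefferman characterizations (see \cite{coifman1974weighted}) yield $\omega^\infty_L \ll \mathcal{L}^d$ and $\omega^\infty_L \in A_\infty(dx)$, with quantitative constants bounded in terms of $d$, $\mu_0$, and $\|\alpha_A^2 \tfrac{dx\,dr}{r}\|_{\mathcal{C}}$, as required.

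The hard part is clearly step one: extracting the Carleson bound on $|\nabla \log(U/t)|^2 t\, dx\, dt$ from the weak DKP hypothesis. Unlike the classical DKP setting where $t|\nabla A|$ is pointwise controlled, here we only have an $L^2$-averaged closeness of $A$ to constant matrices on Whitney boxes, so we cannot differentiate the equation directly. The idea is to freeze coefficients on each Whitney box, write $U$ as a perturbation of the constant-coefficient Green function with pole at infinity (which is an affine function of $t$), and use Caccioppoli/Cacciopoli-with-weights plus the $A_\infty$-style self-improvement of the Carleson norm $\mathcal{N}(A)$ to close the estimate. This is exactly the content of the main Green function results of \cite{david2021carleson}, so in practice I would cite those rather than re-prove them, and focus the write-up on the passage from the Green function bound to the BMO bound on $\log k^\infty_L$ via the Fefferman--Kenig--Pipher mechanism.
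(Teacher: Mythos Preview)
The paper does not prove this lemma; it is quoted verbatim as Theorem~5.2 of \cite{bortz2021elliptic} and used as a black box. So there is no ``paper's own proof'' to compare against. That said, your sketch is exactly the strategy the introduction attributes to \cite{bortz2021elliptic}: Green function estimates from \cite{david2021carleson} (your step one) feed into a Fefferman--Kenig--Pipher weight result \cite{fefferman1991theory} (your step three) via the CFMS comparison (your step two). For the vanishing case (Lemma~\ref{BTZthm}) the weight step is replaced by Korey's variant \cite{korey1998carleson}; for the big-constant case here the original FKP theorem is the right tool, as you say. Your outline is correct in spirit and matches the cited source, though the precise form of the Carleson estimate on $U$ coming out of \cite{david2021carleson} may differ in detail from the $|\nabla\log(U/t)|^2 t\,dx\,dt$ formulation you wrote down.
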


In this section, $\Gamma = \R^d \subset \R^{d+1}$, $\Omega = \reu$, and we study operators
$L=-\divg(A\nabla)$ that are $\mu_0$-elliptic. Points of $\reu$ will be written
$X = (x,t)$, with $x\in \R^d$ and $t > 0$, and surface balls will be denoted
by $\Delta(x,r) = B_r(x) \cap \set{t=0} \subset \R^d$, with $x\in \R^d$ and $r > 0$.

If $\Delta = \Delta(x,r)$ is a surface ball, we may also denote by $T_\Delta$
the Carleson box $T(x,r)= B_r(x) \cap \reu$ over $\Delta(x,r)$, but our notation 
will be slightly simpler if we switch to
\begin{equation} \label{TT}
W(x,r):=\Delta(x,r)\times\Bigl(\frac{r}{2},r\Bigr] \subset \Real^{d+1}_+
\end{equation}
for the Whitney cube. This makes no real difference (compared to \eqref{1b2}),
and in particular the definition of Carleson measures and the weak DKP condition do not
change in any significant way.

So we start the proof of Theorem \ref{t1b1} in $\reu$ by contradiction: we assume 
the theorem to be false, so we can find an ellipticity constant $\mu_0$, a small
constant $\delta_0>0$, and for each integer $j \geq 0$, a $\mu_0$-elliptic matrix $A_j$
such that
\begin{equation}\label{AjDKP_epsj}
    \cN(A_j)\le \epsilon_j^2, \quad\text{where }\epsilon_j=2^{-j},
\end{equation}
but for which the conclusion of the theorem fails for $L_j=-\divg(A_j\nabla)$. That is,
$\omega^\infty_{L_j} \notin A_\infty(\sigma,\delta_0)$, where by habit we still denote 
by $\sigma$ the Lebesgue measure on $\R^d$. Notice that changing the normalization
point for $\omega^\infty_{L_j}$ would only multiply $\omega^\infty_{L_j}$ by a constant, and not affect 
the fact that $\omega^\infty_{L_j} \notin A_\infty(\sigma,\delta_0)$.

This means that for each $j$ we can find a surface ball $\Delta_j \subset \R^d$ and a Borel set 
$E_j \subset \Delta_j$ such that
\begin{equation}\label{bigAinfty}
    \abs{\frac{\omega^{\infty}_{L_j}(E_j)}{\omega_{L_j}^{\infty}(\Delta_j)}-\frac{\sigma(E_j)}{\sigma(\Delta_j)}}\ge\delta_0.
\end{equation}
We want to use this to derive a contradiction, 
but before we start for good, it will be good to record how our various objects
behave under dilations that preserve the boundary.

\subsection{Some scaling properties}
In this subsection we are only interested in the changes of variable induced by 
the linear transformations
\begin{equation}\label{def Tlz}
    \Tlz: \reu \to \reu,  \quad X\mapsto \lambda X+(z,0) = \lambda X+Z,
\end{equation}
where $\lambda > 0$, $z\in \R^d$, and we set $Z = (z,0) \in \R^{d+1}$ to prevent
confusion between $\R^d$ and $\R^{d+1}$.
In the following computations, $\lambda$ and $z$ are fixed, we are given an elliptic matrix
valued function $A$, and we consider 
\begin{equation} \label{3b5}
\wt A(X)=A(\Tlz(X)).
\end{equation}

\begin{lem}\label{lem rsc_DKP}
With the notation above,
\begin{equation}\label{rsc_alpha}
    \alpha_{\wt A}(x,r)=\alpha_A(\Tlz(x,r)) \qquad\text{for any }x\in\rd, r>0,
\end{equation}
and 
\begin{equation}\label{rsc_dkp}
    \cN(\wt A)=\cN(A).
\end{equation}
\end{lem}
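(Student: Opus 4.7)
The plan is a direct change-of-variables computation, taking advantage of the fact that $\Tlz$ is the composition of an isotropic dilation and a horizontal translation, both of which respect the product structure of Whitney boxes and the scale-invariant measure $\frac{dr}{r}$.

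First I would establish the key geometric fact that $\Tlz$ sends Whitney boxes to Whitney boxes, namely
\[
\Tlz\bigl(W(x,r)\bigr) = W\bigl(\Tlz(x,r)\bigr) = W(\lambda x + z, \lambda r),
\]
which is immediate from the definition $W(x,r) = \Delta(x,r)\times(r/2,r]$ since $\Tlz$ multiplies both horizontal and vertical coordinates by $\lambda$ before adding $Z$. For \eqref{rsc_alpha}, I would then fix a constant matrix $A_0\in\cA_0(\mu_0)$ and apply the substitution $Y = \Tlz(X)$ in the definition \eqref{1a5}; since the Jacobian $\lambda^{d+1}$ of $\Tlz$ cancels between numerator and denominator of the average, one obtains
\[
\fiint_{W(x,r)} |\wt A(X)-A_0|^2\,dX = \fiint_{W(\lambda x+z,\lambda r)} |A(Y)-A_0|^2\,dY.
\]
Taking the infimum over $A_0\in\cA_0(\mu_0)$, which is the same class on both sides since $\cA_0(\mu_0)$ does not depend on $x$ or $r$, yields \eqref{rsc_alpha}.

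For \eqref{rsc_dkp}, I would plug \eqref{rsc_alpha} into \eqref{1b4bis}. Given $(x_0,r_0) \in \R^d\times(0,+\infty)$, the integral
\[
r_0^{-d}\iint_{\Delta(x_0,r_0)\times(0,r_0)} \alpha_{\wt A}(x,r)^2\,\frac{d\sigma(x)\,dr}{r}
\]
transforms, under $y=\lambda x + z$ and $s=\lambda r$, into
\[
r_0^{-d}\,\lambda^{-d}\iint_{\Delta(\lambda x_0+z,\lambda r_0)\times(0,\lambda r_0)} \alpha_{A}(y,s)^2\,\frac{d\sigma(y)\,ds}{s} = (\lambda r_0)^{-d}\iint_{\Delta(\Tlz(x_0,r_0))\times(0,\lambda r_0)} \alpha_A(y,s)^2\,\frac{d\sigma(y)\,ds}{s},
\]
where I used $d\sigma(x)=\lambda^{-d}\,d\sigma(y)$ (the measure $\sigma$ on $\R^d$ is Lebesgue) and the scale invariance $dr/r = ds/s$. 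As $(x_0,r_0)$ ranges over $\R^d\times(0,+\infty)$, the pair $(\Tlz(x_0,r_0),\lambda r_0) = (\lambda x_0 + z,\lambda r_0)$ also ranges over all of $\R^d\times(0,+\infty)$, so taking the supremum of both sides gives $\cN(\wt A) = \cN(A)$.

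There is no real obstacle here: the lemma is a bookkeeping statement expressing that the two ingredients of the weak DKP condition, namely the normalized $L^2$ deviation from constant matrices over Whitney boxes and the Carleson measure $\frac{d\sigma(x)\,dr}{r}$, are tailored to be invariant under the boundary-preserving similarities $\Tlz$. The only item to double-check is that the dyadic-style interval $(r/2,r]$ in \eqref{TT} is preserved (rather than, e.g., the ball formulation \eqref{1b2}), but this is clear since $\Tlz$ acts by scalar multiplication on the vertical coordinate. Accordingly, I expect the proof in the paper to be essentially the one above, occupying only a few lines.
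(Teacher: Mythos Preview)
Your proposal is correct and is exactly the direct change-of-variables computation the paper has in mind; in fact the paper simply writes ``Both \eqref{rsc_alpha} and \eqref{rsc_dkp} follow from direct computations; we omit the proof,'' so you have supplied precisely the omitted details.
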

\begin{proof}
Both \eqref{rsc_alpha} and \eqref{rsc_dkp} follow from direct computations; we omit the proof.
\end{proof}

\begin{lem}\label{lem rsc_eminfty}
Now let $\wt L=-\divg\wt A\nabla$. Then for any Borel set $E\in\rd$, 
\begin{equation}
    \omega_L^\infty(E)
    =\lambda^{d-1}U(z,\lambda) \ \omega^\infty_{\wt L}\br{\frac{E-(z,0)}{\lambda}}
    = \lambda^{d-1}U(z,\lambda) \ \omega^\infty_{\wt L}((\Tlz)^{-1}(E)),
\end{equation}
where $U$ is the Green function with pole at infinity associated to $L$ (and normalized at the point $Z_0=(0,1)$).
\end{lem}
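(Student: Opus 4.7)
The plan is to use the uniqueness statement in Lemma~\ref{lem emGreen_infinity} to identify the Green function with pole at infinity for $\wt L$ in terms of $U\circ\Tlz$, and then derive the stated formula for $\omega_{\wt L}^\infty$ from the Riesz representation \eqref{Rieszinfty} by a change of variables. Note that by Lemma~\ref{lem rsc_DKP} (or just the definition of $\wt A$), the matrix $\wt A$ is $\mu_0$-elliptic, so Lemma~\ref{lem emGreen_infinity} applies to $\wt L$.

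First I would verify that $V(X) := U(\Tlz(X))$ is a positive weak solution of $\wt L^\top V = 0$ in $\reu$, continuous up to the boundary and vanishing on $\rd$. This is a direct chain-rule computation: since $\nabla V(X) = \lambda\,(\nabla U)(\Tlz X)$ and $\wt A(X) = A(\Tlz X)$, one finds $\wt L^\top V(X) = \lambda^{2}\,(L^\top U)(\Tlz X)=0$. Evaluating at $Z_0 = (0,1)$ gives $V(Z_0) = U(\Tlz(0,1)) = U(z,\lambda) > 0$, so by the uniqueness part of Lemma~\ref{lem emGreen_infinity} the Green function with pole at infinity for $\wt L$ normalized at $Z_0$ is
\[ \wt U(X) = \frac{U(\Tlz(X))}{U(z,\lambda)}. \]

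Next I would test the Riesz formula \eqref{Rieszinfty} against an arbitrary $f \in C_c^\infty(\rd)$ with extension $F \in C_c^\infty(\reu)$, $F|_{\rd}=f$. Set $\wt f(x) := f(\lambda x + z)$ and $\wt F(X) := F(\Tlz X)$, so $\wt F|_{\rd} = \wt f$. Starting from
\[ \int f\, d\omega_L^\infty = -\iint A^\top(Y)\, \nabla U(Y)\cdot \nabla F(Y)\, dY, \]
I would substitute $Y = \Tlz X$ (so $dY = \lambda^{d+1}\, dX$), together with $\nabla F(\Tlz X) = \lambda^{-1}\nabla \wt F(X)$ and $\nabla U(\Tlz X) = \lambda^{-1} U(z,\lambda)\,\nabla \wt U(X)$. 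The powers of $\lambda$ collect to $\lambda^{-1}\cdot\lambda^{-1}\cdot\lambda^{d+1} = \lambda^{d-1}$, and the Riesz formula for $\wt L$ applied to the pair $(\wt f,\wt F)$ identifies the remaining integral as $\int \wt f\, d\omega_{\wt L}^\infty$, yielding
\[ \int f(y)\, d\omega_L^\infty(y) = \lambda^{d-1} U(z,\lambda)\int f(\lambda x + z)\, d\omega_{\wt L}^\infty(x). \]

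Finally, recognizing the right-hand side as $\lambda^{d-1}U(z,\lambda)\int f\, d(\Tlz)_* \omega_{\wt L}^\infty$ and using that $f$ is arbitrary, I conclude $\omega_L^\infty = \lambda^{d-1} U(z,\lambda)\,(\Tlz)_* \omega_{\wt L}^\infty$, which on a Borel set $E$ reads exactly the stated identity. There is no serious obstacle; the only care needed is the bookkeeping of the $\lambda$-factors and the verification that the identification of $\wt U$ via uniqueness is legitimate, both of which are routine.
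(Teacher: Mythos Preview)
Your proof is correct and follows essentially the same strategy as the paper: identify $\wt U$ with $U\circ\Tlz$ up to the normalizing constant $U(z,\lambda)$, then apply the Riesz formula \eqref{Rieszinfty} with a change of variables. The one notable difference is in how you establish $\wt U(X)=U(\Tlz X)/U(z,\lambda)$: the paper derives this from the scaling relation $G_{\wt L}(X,Y)=\lambda^{1-d}G_L(\Tlz X,\Tlz Y)$ for the finite-pole Green function together with the limit characterization \eqref{Greeninfty_lmt}, whereas you verify directly that $V=U\circ\Tlz$ solves $\wt L^\top V=0$ with the correct boundary behavior and invoke the uniqueness clause of Lemma~\ref{lem emGreen_infinity}. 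Your route is slightly more direct and avoids the Green-function scaling step entirely; the paper's route has the minor advantage that the scaling of $G_L$ is a one-line consequence of \eqref{Greendef} and may feel more concrete. Two small points of care: your identity ``$\wt L^\top V = \lambda^2 (L^\top U)\circ\Tlz$'' should be read in the weak sense (test against $\varphi\in C_c^\infty(\reu)$ and change variables), and the passage from the identity on $C_c^\infty$ test functions to Borel sets uses that both sides are locally finite Borel measures; both are routine, as you say.
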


\begin{proof}
By \eqref{Greendef}, one can show that 
\begin{equation}\label{rsc_Green}
    G_{\wt L}(X,Y)=\lambda^{1-d}G_L(\Tlz(X),\Tlz(Y)) \qquad \text{ for } X,Y\in\reu.
\end{equation}
Denote by $\wt U$ the Green function with pole at infinity for $\wt L$. 
Then by \eqref{Greeninfty_lmt} and \eqref{rsc_Green},
\[
\wt U(Y)=\lim_{k\to\infty}\frac{G_{\wt L}(X_k,Y)}{G_{\wt L}(X_k,Z_0)}=\lim_{k\to\infty}\frac{G_{L}(\Tlz(X_k),\Tlz(Y))}{G_{ L}(\Tlz(X_k),\Tlz(Z_0))},
\]
where $\set{X_k}$ is some sequence of points in $\reu$ such that $\abs{X_k}\to\infty$ as $k\to\infty$. On the other hand, we can write 
\begin{multline*}
    U(Y)=\lim_{k\to\infty}\frac{G_L(\Tlz(X_k),Y)}{G_L(\Tlz(X_k),Z_0)}
    =\lim_{k\to\infty}\frac{G_L(\Tlz(X_k),Y)}{G_L(\Tlz(X_k),\Tlz(Z_0))}\frac{G_L(\Tlz(X_k),\Tlz(Z_0))}{G_L(\Tlz(X_k),Z_0)}\\
    =\lim_{k\to\infty}\frac{G_L(\Tlz(X_k),Y)}{G_L(\Tlz(X_k),\Tlz(Z_0))}U(\Tlz(Z_0)).
\end{multline*}
Comparing it to $\wt U(Y)$, we obtain that
\begin{equation}\label{rsc_U}
    U(\Tlz(Y))=\wt U(Y)U(\Tlz(Z_0)) \qquad\text{for any }Y\in\reu.
\end{equation}
Recall that the Riesz formula \eqref{Rieszinfty} for $\wt L$ asserts that 
\begin{equation}\label{eq Riesz_wtL}
    \int_{\bdy\Omega} \wt f(y) \, d\omega^\infty_{\wt L}(y) = - \iint \wt A^\top(Y)\nabla_{Y}\wt U(Y) \cdot \nabla_{Y} \wt F(Y) \, dY
\end{equation}
for $\wt f \in C_c^\infty(\rd)$ and $\wt F \in C_c^\infty(\ree)$ such that $\wt F(y,0)=\wt f(y)$. Set $F(\Tlz(Y))=\wt F(Y)$, $f(y)=F(y,0)$. Then $f \in C_c^\infty(\rd)$ and $F \in C_c^\infty(\ree)$. 
Using \eqref{rsc_U}, the change of variable $Y' = \Tlz(Y)$, and then the Riesz formula for $L$, the right-hand side 
of \eqref{eq Riesz_wtL} is equal to
\begin{multline*}
   -\lambda^{1-d}U(\Tlz(Z_0))^{-1}\iint A^\top(Y')\nabla U(Y') \cdot \nabla F(Y') \, dY'\\  
   =\lambda^{1-d}U(z,\lambda)^{-1}\int_{\rd}f(y)d\omega_L^\infty(y),
\end{multline*}
where we have also used the fact that 
$U(\Tlz(Z_0))=U(z,\lambda)$. 
Since $\wt f(y)=\wt F(y,0)=F(\Tlz(y,0))=f(\lambda y+z)$,
   \eqref{eq Riesz_wtL} asserts that 
   \[
   \int_{\rd}f(\lambda y+z)d\omega^\infty_{\wt L}(y)
   =\lambda^{1-d}U(z,\lambda)^{-1}\int_{\rd}f(y)d\omega_L^\infty(y)
   \]
   for $f\in C_c^\infty(\rd)$. By a limiting argument, this implies that for any Borel set $E\subset\rd$, and if we set $E' = \lambda^{-1}(E-(z,0)) = (\Tlz)^{-1}(E)$, 
   \begin{multline*}
  \omega_{\wt L}^\infty(E)
  =\int_{\rd}\1_{E}(\lambda y+z)d\omega_{\wt L}^\infty(y)
  =\lambda^{1-d}U(z,\lambda)^{-1}\int_{\rd}\1_{E'}(y)d\omega_L^\infty(y) 
  \\
      =\lambda^{1-d}U(z,\lambda)^{-1}\omega_L^\infty(E'),
   \end{multline*}
   as desired.
\end{proof}

\subsection{We can modify $A_j$ away from a band}
Return to the bad operator $L_j=-\divg(A_j\nabla)$ and its bad set $E_j \subset \Delta_j$.
We claim that we can assume that for all $j$, $\Delta_j = \Delta_0$, the unit ball in 
$\R^d$. Write $\Delta_j = \Delta(z,\lambda)$, and notice that $\Tlz(\Delta_0) = \Delta_j$. 
Then replace $A_j$ with the function $\wt A_j = A \circ \Tlz$ given by \eqref{3b5}.
By Lemma \ref{lem rsc_DKP}, $\cN(\wt A_j)=\cN(A_j)$. In addition, set
$\wt E_j = (\Tlz)_{-1}(E_j) \subset (\Tlz)_{-1}(\Delta_j) = \Delta_0$; 
then by Lemma~\ref{lem rsc_eminfty},
\[
\frac{\omega_{L_j}^\infty(E_j)}{\omega_{L_j}^\infty(\Delta_j)}
=\frac{\omega_{\wt L_j}^\infty(\wt E_j)}{\omega_{\wt L_j}^\infty(\Delta_0)},
\]
where the extra factors $\lambda^{d-1}U(z,\lambda)$ are the same on the numerator and denominator and cancel. 
So let us assume that $\Delta_j = \Delta_0$ (and that \eqref{AjDKP_epsj} and \eqref{bigAinfty} hold).

Our goal is to use the $A_j$ to construct a matrix-valued function $A^*$
that satisfies the vanishing weak DKP condition and keeps the bad properties of each $A_j$.
The next lemma will allow us to modify $A_j$ outside of some strip, but essentially maintain
\eqref{bigAinfty}.

\begin{lem}\label{lem strip}
There exist $0<\rho_j<1$ and $M_j>1$ such that if $A_j^*$ is any $\mu_0$-elliptic matrix that satisfies $A_j^*(x,t)=A_j(x,t)$ for $t\in (\rho_j,M_j)$, and $\cN(A_j^*)\le C_0$ for some $C_0<\infty$, then 
\begin{equation}\label{bigAinfty2}
    \abs{\frac{\omega^{\infty}_{L_j*}(E_j)}{\omega_{L_j*}^{\infty}(\Delta_0)}-\frac{\sigma(E_j)}{\sigma(\Delta_0)}}\ge\frac{\delta_0}{2},
\end{equation}
where $\omega_{L_j*}^{\infty}$ corresponds to the operator $L_j*=-\divg{A_j^*\nabla}$.
\end{lem}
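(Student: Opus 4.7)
The plan is to argue by contradiction, reducing to a stability statement for elliptic measures under local convergence of coefficients --- precisely the limit lemma announced for Section~\ref{sec emlimit_pf}. Fix $j$ and $C_0 < \infty$ and suppose the conclusion fails. Choosing $\rho_k = 1/k$ and $M_k = k$, I can find for each $k \geq 1$ a $\mu_0$-elliptic matrix $A_k^{*}$ with $\cN(A_k^{*}) \leq C_0$ that agrees with $A_j$ on the slab $\{1/k < t < k\}$ yet violates \eqref{bigAinfty2}, namely
\[
\abs{\frac{\omega^{\infty}_{L_k^*}(E_j)}{\omega_{L_k^*}^{\infty}(\Delta_0)} - \frac{\sigma(E_j)}{\sigma(\Delta_0)}} < \frac{\delta_0}{2}.
\]
Combining this with \eqref{bigAinfty} via the triangle inequality yields
\[
\abs{\frac{\omega^{\infty}_{L_k^*}(E_j)}{\omega_{L_k^*}^{\infty}(\Delta_0)} - \frac{\omega^{\infty}_{L_j}(E_j)}{\omega_{L_j}^{\infty}(\Delta_0)}} \geq \frac{\delta_0}{2}
\]
for every $k$, and my goal becomes to show that in fact this left-hand side tends to $0$ as $k\to\infty$.

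First I would normalize the Green functions with pole at infinity $U_k^{*}$ for $L_k^{*}$ at $Z_0 = (0,1)$, so $U_k^{*}(Z_0)=1$. Uniform ellipticity and Harnack's inequality give local uniform bounds on $U_k^{*}$; interior De Giorgi--Nash--Moser together with boundary Hölder estimates for the half-space give equicontinuity on compact subsets of $\overline{\reu}$. Extracting a subsequence, I obtain a locally uniform (and weakly $W^{1,2}_{\mathrm{loc}}$) limit $U^{*}$. Since $A_k^{*} = A_j$ on $\{1/k < t < k\}$ and these slabs exhaust $\reu$, passing to the limit in the weak formulation yields $L_j^\top U^{*} = 0$ in $\reu$; boundary Hölder continuity forces $U^{*} \equiv 0$ on $\{t=0\}$, while $U^{*}(Z_0) = 1$ together with Harnack gives $U^{*} > 0$. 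By the uniqueness statement in Lemma~\ref{lem emGreen_infinity}, $U^{*} = U_j$.

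The final step is to transfer this convergence $U_k^{*} \to U_j$ into convergence of the normalized elliptic measures evaluated at $E_j$ and $\Delta_0$. The Riesz formula \eqref{Rieszinfty}, together with the weak $W^{1,2}_{\mathrm{loc}}$-convergence $\nabla U_k^{*} \rightharpoonup \nabla U_j$, gives $\omega^{\infty}_{L_k^{*}} \to \omega^{\infty}_{L_j}$ in the weak sense of Radon measures. The uniform bound $\cN(A_k^{*}) \leq C_0$ lets me invoke Lemma~\ref{lem DKPtoAinfty} with constants independent of $k$, so the densities $k^{\infty}_{L_k^{*}} = d\omega^{\infty}_{L_k^{*}}/d\sigma$ belong to $A_\infty(\sigma)$ uniformly and in particular satisfy a uniform reverse Hölder inequality. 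Combined with the weak convergence, this uniform integrability upgrades to $L^1_{\mathrm{loc}}(\sigma)$-convergence of the densities, whence
\[
\frac{\omega^{\infty}_{L_k^{*}}(E_j)}{\omega^{\infty}_{L_k^{*}}(\Delta_0)} \longrightarrow \frac{\omega^{\infty}_{L_j}(E_j)}{\omega^{\infty}_{L_j}(\Delta_0)},
\]
contradicting the displayed inequality for large $k$.

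The main obstacle is this last transfer step: weak convergence of Radon measures alone does not force convergence on arbitrary Borel sets like $E_j$. The uniform $A_\infty$ bound coming from Lemma~\ref{lem DKPtoAinfty} (applied with the uniform Carleson norm $C_0$) is precisely the ingredient that rules out concentration of mass and enables the upgrade, and carrying this out carefully is the content of the limit lemma deferred to Section~\ref{sec emlimit_pf}.
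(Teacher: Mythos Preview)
Your proposal is correct but follows a genuinely different route from the paper. The paper first uses Lemma~\ref{lem polediff} to replace the pole at infinity by a fixed finite pole $X=(0,t_0)$ with $t_0$ large enough that the change in the ratio is at most $\delta_0/8$, uniformly over all the operators involved; it then invokes Lemma~\ref{lem em_limit} (stated and proved for \emph{finite} poles) to get $\omega^{X}_{L_{j,k}}(E_j)\to\omega^{X}_{L_j}(E_j)$ and $\omega^{X}_{L_{j,k}}(\Delta_0)\to\omega^{X}_{L_j}(\Delta_0)$, which yields the contradiction. You instead stay at the pole at infinity throughout: you prove compactness of the normalized Green functions $U_k^\ast$ and identify the limit as $U_j$ via the uniqueness in Lemma~\ref{lem emGreen_infinity}, pass to weak convergence of $\omega^\infty_{L_k^\ast}$ through the Riesz formula, and then exploit the uniform $A_\infty$/reverse H\"older bound from Lemma~\ref{lem DKPtoAinfty} to upgrade this to convergence on the Borel set $E_j$.

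Both arguments hinge on the same nontrivial point: weak convergence alone does not give convergence on an arbitrary Borel set, and it is the uniform Carleson bound $\cN\le C_0$ (hence uniform $A_\infty$) that saves the day. The paper's route is a bit more economical, since Lemma~\ref{lem polediff} is already in hand and the finite-pole limit lemma avoids reproving compactness for the Green function at infinity; your route has the virtue of being self-contained at the pole at infinity. Two small quibbles: what you actually obtain (and all you need) is weak $L^p$-convergence of the densities on bounded sets, which suffices since $\mathbf{1}_{E_j}\in L^{p'}$, rather than strong $L^1_{\mathrm{loc}}$-convergence as you write; and your closing sentence slightly misdescribes Lemma~\ref{lem em_limit}, which in the paper is formulated for finite poles, not for the pole at infinity.
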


We could also have allowed modifications of $A_j$ for $|x|$ very large, but since this is not necessary for our construction, we will not do that.

\begin{proof}
Notice that we allow $\rho_j$ and $M_j$ to depend wildly on $j$. 
We prove the lemma by contradiction. Fix $j$ and suppose that the statement is false; then 
there exists a sequence of $\mu_0$-elliptic matrices $\set{A_{j,k}}_{k\in\NN}$, such that $A_{j,k}(x,t)=A_j(x,t)$ for $t\in(2^{-k},2^k)$, $\cN(A_{j,k})\le C_0$, and 
\begin{equation}\label{eq contradict1}
    \abs{\frac{\omega^{\infty}_{L_{j,k}}(E_j)}{\omega_{L_{j,k}}^{\infty}(\Delta_0)}-\frac{\sigma(E_j)}{\sigma(\Delta_0)}}<\frac{\delta_0}{2}, \qquad k\in\NN.
\end{equation}
Let $X=(0,t_0)$, with $t_0>4$ to be chosen soon. Then by Lemma \ref{lem polediff}
(applied with $\kappa = 2$, $x=0$, and $r = 1$), 
\begin{equation*}
    \abs{\frac{\omega_{L_j}^{X}(E_j)}{\omega_{L_j}^{X}(\Delta_0)}-\frac{\omega_{L_j}^{\infty}(E_j)}{\omega_{L_j}^{\infty}(\Delta_0)}}\le C\br{\frac{1}{t_0}}^\gamma
    = C {t_0}^{-\gamma},
\end{equation*}
where $C$ and $\gamma$ are positive constants that depend only on $d$ and $\mu_0$.
Choose $t_0$ so large (depending on $d$ and $\mu_0$) that 
$C {t_0}^{-\gamma} \leq \frac{\delta_0}{8}$; then 
\begin{equation}\label{eq poles_eps0}
    \abs{\frac{\omega_{L_j}^{X}(E_j)}{\omega_{L_j}^{X}(\Delta_0)}-\frac{\omega_{L_j}^{\infty}(E_j)}{\omega_{L_j}^{\infty}(\Delta_0)}}\le \frac{\delta_0}{8}.
\end{equation}
By the triangle inequality, \eqref{bigAinfty} (recall that $\Delta_j = \Delta_0$) 
and \eqref{eq poles_eps0} yield
\begin{equation}\label{eq X0_gteps0}
  \abs{\frac{\omega^{X}_{L_j}(E_j)}{\omega_{L_j}^{X}(\Delta_0)}-\frac{\sigma(E_j)}{\sigma(\Delta_0)}}\ge\frac{7}{8}\delta_0,  
\end{equation}
while \eqref{eq contradict1} and the analogue of \eqref{eq poles_eps0} 
for $L_{j,k}$ (which is valid with uniform bounds) give that 
\begin{equation}\label{eq contradict2}
    \abs{\frac{\omega^{X}_{L_{j,k}}(E_j)}{\omega_{L_{j,k}}^{X}(\Delta_0)}-\frac{\sigma(E_j)}{\sigma(\Delta_0)}}<\frac38\delta_0 \qquad \text{for } k\in\NN.
\end{equation}
Now we want to let $k$ tend to $+\infty$. We claim that for any bounded Borel set $E\subset\rd$, 
\begin{equation}\label{eq em_limitj}
    \lim_{k\to\infty}\omega^{X}_{L_{j,k}}(E)=\omega^{X}_{L_j}(E).
\end{equation}
Once we prove this, we let $k$ tend to $+\infty$ in \eqref{eq contradict2}
and get a contradiction with \eqref{eq X0_gteps0}. 
So Lemma \ref{lem strip} follows from the following lemma (proved later).
\end{proof}

\begin{lem}\label{lem em_limit}
Let $L=-\divg{A\nabla}$ be a $\mu_0$-elliptic operator, let  $L_k=-\divg{A_k\nabla}$ be 
a $\mu_0$-elliptic operator that satisfies $A_k(x,t)=A(x,t)$ for $t\in (2^{-k}, 2^k)$, 
and $\cN(A_k)\le C_0$ for some $C_0<\infty$ for all $k\in\ZZ_+$. 
Then for any bounded Borel set $E\subset\rd$, and any $X_0\in\reu$, there holds
\begin{equation}\label{eq em_limit}
    \lim_{k\to\infty}\omega^{X_0}_{L_k}(E)=\omega_L^{X_0}(E).
\end{equation}
\end{lem}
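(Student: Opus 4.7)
The plan is to first prove the result when the indicator $\1_E$ is replaced by $f \in C_c(\rd)$, via a compactness argument on the corresponding solutions, and then extend to bounded Borel sets using uniform absolute continuity. The key structural fact is that $A_k = A$ on the strip $\rd \times (2^{-k}, 2^k)$, so any compact subset of $\reu$ lies in this strip for all $k$ large.

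For $f \in C_c(\rd)$, set $u_k(X) := \int f \, d\omega^X_{L_k}$ and $u(X) := \int f \, d\omega^X_L$, the unique bounded solutions of $L_k u_k = 0$ and $Lu = 0$ in $\reu$ with boundary data $f$ and decay at infinity. The required uniform estimates are: $\|u_k\|_{L^\infty} \le \|f\|_\infty$ from the maximum principle; equicontinuity on compact subsets of $\overline{\reu}$ with constants independent of $k$, from interior De Giorgi--Nash--Moser and standard boundary H\"older estimates for uniformly elliptic operators in $\reu$ with continuous data; and uniform decay at infinity, via $|u_k(X)| \le \|f\|_\infty \omega^X_{L_k}(\supp f) \le C|X|^{1-d}$ for $|X|$ large, where the last bound uses Lemma \ref{lem cfms} and \eqref{eq2.green}. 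By Arzel\`a--Ascoli, a subsequence $u_{k_n}$ converges locally uniformly on $\overline{\reu}$ to a continuous $v$ with boundary data $f$ and decay at infinity. Caccioppoli's inequality gives uniform $L^2_{\rm loc}(\reu)$ bounds on $\nabla u_{k_n}$, so along a further subsequence $\nabla u_{k_n} \rightharpoonup \nabla v$ in $L^2_{\rm loc}$. For $\phi \in C_c^\infty(\reu)$ and $n$ large, $A_{k_n} = A$ on $\supp\phi$, so $\iint A \nabla u_{k_n} \cdot \nabla \phi = 0$ passes to the limit to give $Lv = 0$ weakly. Uniqueness of the Dirichlet problem with decay at infinity (a maximum principle on large balls) forces $v = u$, and since every subsequence has a further subsequence converging to $u$, the full sequence does, yielding $u_k(X_0) \to u(X_0)$, i.e., $\int f \, d\omega^{X_0}_{L_k} \to \int f \, d\omega^{X_0}_L$.

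To extend this to a bounded Borel $E \subset B_R \cap \rd$, I would invoke the big-constant $A_\infty$ result Lemma \ref{lem DKPtoAinfty} (applied to each $L_k$, with constants depending only on $d$, $\mu_0$, and $\cN(A_k) \le C_0$), together with Lemma \ref{lemBTZ kernel} to transfer the $A_\infty$ estimate from $\omega^\infty_{L_k}$ to $\omega^{X_0}_{L_k}$ on $B_{2R}$, yielding a uniform bound $\omega^{X_0}_{L_k}(F) \le C_R \sigma(F)^\theta$ for all Borel $F \subset B_{2R} \cap \rd$, with $C_R, \theta$ independent of $k$. Given $\epsilon > 0$, choose compact $K \subset E \subset U$ open with $U \subset B_{2R}$ and $\sigma(U \setminus K)$ small enough that the bound is $< \epsilon$, and pick $\phi \in C_c(\rd)$ with $\1_K \le \phi \le \1_U$. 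Then $|\omega^{X_0}_{L_k}(E) - \int \phi \, d\omega^{X_0}_{L_k}| < \epsilon$ uniformly in $k$, and similarly for $L$; combining with the continuous case gives $\limsup_k |\omega^{X_0}_{L_k}(E) - \omega^{X_0}_L(E)| \le 2\epsilon$, and $\epsilon \to 0$ finishes the proof.

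The main obstacle I anticipate is this final extension step: weak convergence of measures alone would only give convergence on $\omega^{X_0}_L$-continuity sets, so it is essential that the uniform DKP norm bound $\cN(A_k) \le C_0$ propagates (through Lemma \ref{lem DKPtoAinfty}) to a uniform absolute continuity estimate for $\{\omega^{X_0}_{L_k}\}_k$ holding simultaneously across all $k$. The passage to the limit in the PDE in the continuous case is routine once the uniform estimates are in hand.
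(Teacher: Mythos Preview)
Your proposal is correct and follows essentially the same two-step strategy as the paper: first establish convergence for continuous boundary data via compactness and uniqueness for the Dirichlet problem, then upgrade to bounded Borel sets using the uniform $A_\infty$ bound coming from $\cN(A_k)\le C_0$ and Lemma~\ref{lem DKPtoAinfty}.

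There are two minor streamlinings worth noting. First, in the continuous-data step you exploit the structural hypothesis $A_k=A$ on $\rd\times(2^{-k},2^k)$ directly: for any test function $\phi\in C_c^\infty(\reu)$ and $k$ large, $A_k=A$ on $\supp\phi$, so weak $L^2_{\rm loc}$ convergence of $\nabla u_{k_n}$ suffices to pass to the limit in the equation. The paper instead proves \emph{strong} $L^2_{\rm loc}$ convergence of the gradients via an energy estimate and then uses $A_k\to A$ a.e.; this is more work than necessary here. Second, your extension to Borel sets is a single sandwich argument (compact $K\subset E\subset U$ open, with $\sigma(U\setminus K)$ small and the uniform bound $\omega^{X_0}_{L_k}(U\setminus K)\le C_R\sigma(U\setminus K)^\theta$), whereas the paper first proves the open-set case and then a second approximation to reach general Borel sets. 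Both routes are valid; yours is slightly more direct. Your explicit invocation of Lemma~\ref{lemBTZ kernel} to transfer the $A_\infty$ control from $\omega^\infty_{L_k}$ to $\omega^{X_0}_{L_k}$ on a fixed ball, with constants uniform in $k$, is exactly the point the paper leaves implicit.
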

This lemma should not surprise the reader, but 
something like the uniform bound $\cN(A_k)\le C_0$  that guarantees $A_\infty$ is needed. 
The weak convergence of the elliptic measures, on the other hand, does not require anything for the operator apart from ellipticity. In fact, it is well known that assuming $A_k\to A$ a.e. as $k\to\infty$, $A_k$ and $A$ elliptic, then for any fixed $X_0\in\reu$, $\int f\,d\omega_{L_k}^{X_0}$ tends to $\int f\,d\omega_{L}^{X_0}$ as $k\to\infty$ for any continuous function $f$ with compact support on $\rd$; see e.g. \cite{kenig1993neumann}. However, we do not seem to find a proof written for the convergence \eqref{eq em_limit}, and so we give the proof in Section~\ref{sec emlimit_pf}.

\subsection{We neutralize the coefficients of $A_j$ away from a band} \label{S33}
Now we continue with a given $j \geq 0$ and construct a matrix $A_j^*$ that coincides with $A_j$ on the large band $\big\{ (x,t) \, ; \, \frac{\rho_j}{2}<t\le 2M_j \big\}$
and with the identity matrix outside of a much larger band (so that further gluing will be easier). The main point of the construction is to do it without increasing the norm $\cN(A_j)$ too much.

\begin{defn}[The matrix $A_j^*$]\label{def Aj*}
Let $A_j$ be the matrix that we fixed earlier. Let $\rho_j$ and $M_j$ be as in 
Lemma \ref{lem strip}. Set $N_j=\epsilon_j^{-1}=2^j$ and denote by $I$ is the identity 
matrix of size $d+1$. We define $A_j^*$ by
\[
A_j^*(x,t):=\begin{cases}
I, \qquad t>2^{N_j}M_j  \,\text{ or  } t\le 2^{-N_j}\rho_j,\\
\frac{l}{N_j}I+\br{1-\frac{l}{N_j}}A_j(x,t), \qquad 2^l M_j<t\le 2^{l+1}M_j, \text{ for } 1\le l\le N_j-1,\\
A_j(x,t), \qquad \frac{\rho_j}{2}<t\le 2M_j,\\
\frac{l}{N_j}I+\br{1-\frac{l}{N_j}}A_j(x,t), \qquad 2^{-l-1} \rho_j<t\le 2^{-l}\rho_j, \text{  for } 1\le l\le N_j-1.
\end{cases}
\]
\end{defn}

Notice that we only care about the values of $t$ here, and we do not modify $A_j(x,t)$ 
for $|x|$ large, because this is not needed for the construction. 
We need a very large interval around $[\frac{\rho_j}{2}, 2M_j]$ because we want the coefficients
to vary slowly, so that some $\ell^2$-norm is small. We now evaluate the $\alpha$-numbers for $A_j^*$.

\begin{lem}\label{lem alphaAj*}
For any $x\in\rd$, 
\[
\alpha_{A_j^*}(x,r)
\begin{cases}
=0, \qquad r\le 2^{-N_j}\rho_j \text{ or } r\ge 2^{N_j+1}M_j,\\
\le C\epsilon_j, \qquad 2^{-N_j}\rho_j\le r\le \rho_j\text{ or } 2M_j\le r\le 2^{N_j+1}M_j, \\
=\alpha_{A_j}(x,r), \qquad \rho_j<r\le 2M_j.
\end{cases}\]
Here, the constant $C$ depends only on $d$ and $\mu_0$.
\end{lem}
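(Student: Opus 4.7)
The plan is to examine $\alpha_{A_j^*}(x,r)$ case by case, according to where the Whitney $t$-slab $(r/2, r]$ of $W(x,r) = \Delta(x,r) \times (r/2, r]$ sits inside the piecewise definition of $A_j^*$ from Definition~\ref{def Aj*}. This naturally produces three regimes, one per line of the statement.

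In the extreme regime $r \le 2^{-N_j}\rho_j$ or $r \ge 2^{N_j+1}M_j$, the slab lies entirely in a region where $A_j^* \equiv I$; the trial matrix $A_0 = I$ in the infimum defining $\alpha_{A_j^*}(x,r)$ then gives $0$. In the central regime $\rho_j < r \le 2M_j$ the inequalities $r/2 > \rho_j/2$ and $r \le 2M_j$ place the slab inside $(\rho_j/2, 2M_j]$, where $A_j^* = A_j$, so the two infima coincide and $\alpha_{A_j^*}(x,r) = \alpha_{A_j}(x,r)$.

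The substantive work is the transition regime. By the up/down symmetry in Definition~\ref{def Aj*}, it suffices to treat $2M_j \le r \le 2^{N_j+1}M_j$. Choose $l_0 \in \{1, \dots, N_j\}$ with $2^{l_0}M_j \le r < 2^{l_0+1}M_j$; then $r/2 \in [2^{l_0-1}M_j, 2^{l_0}M_j)$, so the slab intersects at most the two consecutive layers of indices $l_0-1$ and $l_0$ (with index $0$ interpreted as the central $A_j$-band when $l_0 = 1$). On each such layer Definition~\ref{def Aj*} gives $A_j^*(X) = c_l I + (1-c_l)A_j(X)$ with $c_l = l/N_j$, and consecutive values of $c_l$ differ by exactly $\epsilon_j$. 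For any constant matrix $\bar A$, the pointwise identity
\[
A_j^*(X) - \bigl(c_{l_0} I + (1-c_{l_0})\bar A\bigr) = (c_l - c_{l_0})(I - \bar A) + (1-c_l)(A_j(X) - \bar A)
\]
holds on each layer. Using $|c_l - c_{l_0}| \le \epsilon_j$, $|I - \bar A| \le 1 + \mu_0$, and $1 - c_l \le 1$, this yields the pointwise bound $|A_j^*(X) - A_0| \le C\epsilon_j + (1-c_l)|A_j(X) - \bar A|$ for the constant trial matrix $A_0 := c_{l_0}I + (1-c_{l_0})\bar A$.

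The main obstacle I foresee is refining this estimate into the claimed pointwise $C\epsilon_j$ rather than the naive $C\epsilon_j + \alpha_{A_j}(x,r)$ that comes out after averaging and optimizing $\bar A$. The expected route is to exploit the damping factor $(1-c_l)$: when $l_0$ is close to $N_j$ this factor is itself of size $\epsilon_j$, so choosing $\bar A = 0$ and invoking ellipticity of $A_j$ kills the second term outright; when $l_0$ is close to $1$, only the thin sublayer $(2^{l_0}M_j, r]$ differs from the central $A_j$-band, giving $|A_j^*(X) - A_j(X)| \le \epsilon_j|I - A_j(X)| \le C\epsilon_j$ pointwise across the entire slab and reducing matters to the central case. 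Interpolating between these extremes uniformly in $l_0$, and tracking that no residual $\alpha_{A_j}$ term leaks into the final constant, is the bookkeeping step where the technical work concentrates.
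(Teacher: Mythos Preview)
Your treatment of the extreme regime and the central regime is correct and matches the paper's Cases~1 and~4 verbatim. Your setup for the transition regime is also correct: the Whitney slab $(r/2,r]$ meets at most two consecutive layers, and the algebraic identity you write down with the trial matrix $A_0 = c_{l_0}I + (1-c_{l_0})\bar A$ is exactly the decomposition the paper uses (with the concrete choice $\bar A = \fint_{T(x,2r)} A_j$).

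The gap is in your perception of the ``main obstacle.'' You arrive at the bound $\alpha_{A_j^*}(x,r) \le C\epsilon_j + C\bigl(\fint_{W(x,r)}|A_j-\bar A|^2\bigr)^{1/2}$ and worry that the second term is merely $\alpha_{A_j}(x,r)$ rather than $C\epsilon_j$. But that term \emph{is} already $\le C\epsilon_j$: the weak DKP hypothesis $\cN(A_j)\le\epsilon_j^2$ together with Lemma~\ref{lem gammabdalpha} (specifically the pointwise estimate \eqref{eq gammabdd}) gives $\gamma_{A_j}(x,2r)^2 \le C\cN(A_j) \le C\epsilon_j^2$, and since $W(x,r)\subset T(x,2r)$ with comparable volumes, the oscillation integral you have is controlled by $C\gamma_{A_j}(x,2r)$. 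This is precisely what the paper does in its Cases~2 and~3. The crucial point---that a Carleson condition on $\alpha_A$ automatically yields a uniform pointwise bound on $\gamma_A$---is the content of Lemma~\ref{lem gammabdalpha}, and once you invoke it your ``naive'' estimate is already the final answer.

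Your proposed workaround via the damping factor $(1-c_l)$ and an interpolation in $l_0$ is therefore unnecessary, and as written it does not actually work: $(1-c_l)$ is only small for $l_0$ near $N_j$, and there is no mechanism that kills the $A_j$-oscillation for intermediate $l_0$ without appealing to the DKP smallness of $A_j$ anyway. (Also, $\bar A=0$ is not in $\cA_0(\mu_0)$, so that heuristic step is ill-posed.) Drop the interpolation paragraph and simply invoke Lemma~\ref{lem gammabdalpha}; then your argument and the paper's coincide.
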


\begin{proof}
Let us discuss four cases. 

\noindent \textbf{Case 1.} $r\le 2^{-N_j}\rho_j \text{ or } r\ge 2^{N_j+1}M_j$. 

In this case, observe that for any $x\in\rd$, 
\[
W(x,r)=\Delta(x,r)\times(r/2,r]\subset \rd\times \set{(0, 2^{-N_j\rho_j}] \cup ( 2^{N_j}M_j,\infty)}.
\]
By the definition of $A_j^*$, $A_j^*(x,t)=I$ for $(x,t)\in \rd\times \set{(0, 2^{-N_j\rho_j}] \cup (2^{N_j}M_j,\infty)}$. So we can just take $A_0=I$ in the definition of $\alpha_{A_j^*}$ and get that
\[
\alpha_{A_j^*}(x,r)=0 \qquad\text{for }x\in\rd, \, r\le 2^{-N_j}\rho_j \text{ or } r\ge 2^{N_j+1}M_j.
\]

\noindent\textbf{Case 2.} $2^{-N_j}\rho_j\le r\le \rho_j$. 

Let $k$ be such that $2^{-k-1}\rho_j<r\le 2^{-k}\rho_j$; then $0\le k\le N_j-1$ and for 
$x\in\rd$,
\begin{multline*}
    \alpha_{A_j^*}(x,r)^2\\
    \le \frac{1}{\abs{W(x,r)}}\int_{\Delta(x,r)}\int_{r/2}^{2^{-k-1}\rho_j}\abs{\frac{k+1}{N_j}I+\br{1-\frac{k+1}{N_j}}A_j(y,t)-A_0}^2dtdy\\
    +\frac{1}{\abs{W(x,r)}}\int_{\Delta(x,r)}\int_{2^{-k-1}\rho_j}^r\abs{\frac{k}{N_j}I+\br{1-\frac{k}{N_j}}A_j(y,t)-A_0}^2dtdy\\
    =: I_1+I_2,
\end{multline*}
where we use the constant matrix
\begin{equation}\label{eq A0}
   A_0=\frac{k}{N_j}I+\br{1-\frac{k}{N_j}}\fiint_{T(x,2r)}A_j(Z)dZ.
\end{equation}
Inserting the definition of $A_0$ into the integrals, we obtain that
\begin{multline*}
    I_1\le \frac{1}{\abs{W(x,r)}}\int_{\Delta(x,r)}\int_{r/2}^{2^{-k-1}\rho_j}
    \br{\frac{1}{N_j}}^2\br{\abs{I}+\abs{A_j(y,t)}^2}dtdy\\
    +\frac{1}{\abs{W(x,r)}}\int_{\Delta(x,r)}\int_{r/2}^{2^{-k-1}\rho_j}\br{1-\frac{k}{N_j}}^2\Big|A_j(y,t)-\fiint_{T(x,2r)}A_j(Z)dZ\Big|^2dtdy\\
    \le CN_j^{-2}+\frac{1}{\abs{W(x,r)}}\int_{\Delta(x,r)}\int_{r/2}^{2^{-k-1}\rho_j}\Big|A_j(y,t)-\fiint_{T(x,2r)}A_j(Z)dZ\Big|^2dtdy\\
   \leq  C\epsilon_j^{2}+C \fiint_{Y \in T(x,2r)} \Big| A_j(Y)-\fiint_{T(x,2r)}A_j(Z)dZ\Big|^2dtdy.
\end{multline*} 
At this point we find it convenient to use an observation of \cite{david2021carleson}.
Set, for any elliptic matrix-valued function $A$ and $(x,r) \in \reu$,
\begin{equation} \label{def gamma}
\gamma_A(x,r) 
= \inf_{A_0 \in \cA_0(\mu_0)}\bigg\{\fiint_{(y,s) \in T(x,r)} |A(y,s) - A_0|^2 \bigg\}^{1/2}.
\end{equation}
Notice that we integrate on $T(x,r)$, while in the definition \eqref{1a5}
of $\alpha_A(x,r)$, we only integrated on the Whitney box $W(x,r)$, so it seems that 
$\gamma_A(x,r)$ should be harder to control. Yet we have the following.

\begin{lemma}[\cite{david2021carleson}, Remark 4.22]\label{lem gammabdalpha}
Suppose that $A$ is a $\mu_0$-elliptic matrix-valued function defined on $\mathbb{R}^{n+1}_+$. Then for every surface ball $\Delta_0$,
\[\left\|\gamma_A(x,r)^2 \, \frac{dx \, dr}{r} \right\|_{\mathcal{C}(\Delta_0)} \le C \left\|\alpha_A(x,r)^2 \, \frac{dx \, dr}{r} \right\|_{\mathcal{C}(3\Delta_0)},\]
and
\begin{equation} \label{eq gammabdd}
    \gamma_A(x,r)^2 \le C \left\|\alpha_A(x,r)^2 \, \frac{dx \, dr}{r} \right\|_{\mathcal{C}(3\Delta_0)} \text{for all } (x,r) \in T_{\Delta_0},
\end{equation}
where $C$ only depends on dimension.
\end{lemma}

\ms
Return to $I_1$; the estimate above yields 
$I_1 \leq  C\epsilon_j^{2}+C  \gamma_{A_j}(x,2r)^2$, and now 
\eqref{eq gammabdd} implies that $I_1 \leq C\epsilon_j^{2}+C \cN(A_j) \leq C\epsilon_j^2$
(by \eqref{AjDKP_epsj}).

The estimate for $I_2$ is easier, as most of the terms in the integral are cancelled. Again by \eqref{eq gammabdd}, \begin{multline*}
    I_2\le \frac{1}{\abs{W(x,r)}}\int_{\Delta(x,r)}\int_{2^{-k-1}\rho_j}^r 
  \bigg|  A_j(y,t)-\fiint_{T(x,2r)}A_j(Z)dZ \bigg|^2dtdy\\
    \le C\gamma_{A_j}(x,2r)^2\le C\epsilon_j^2.
\end{multline*}
So we have proved that $\alpha_{A_j^*}(x,r)^2\le C\epsilon_j^2$ for any 
$x\in\rd$ when 
$2^{-N_j}\rho_j\le r\le \rho_j$.

\medskip

\noindent\textbf{Case 3.} $2M_j\le r\le 2^{N_j+1}M_j$.

In this case, $\alpha_{A_j^*}(x,r)$ can be estimated as in Case 2. 
More precisely, write $2^kM_j<r\le 2^{k+1}M_j$ for some $1\le k\le N_j$. Then for any $x\in\rd$,
\begin{multline*}
    \alpha_{A_j^*}(x,r)^2\\
    \le \frac{1}{\abs{W(x,r)}}\int_{\Delta(x,r)}\int_{r/2}^{2^kM_j}\abs{\frac{k-1}{N_j}I+\br{1-\frac{k-1}{N_j}}A_j(y,t)-A_0}^2dtdy\\
    +\frac{1}{\abs{W(x,r)}}\int_{\Delta(x,r)}\int_{2^{k}M_j}^r\abs{\frac{k}{N_j}I+\br{1-\frac{k}{N_j}}A_j(y,t)-A_0}^2dtdy,
\end{multline*}
where $A_0$ is the same constant matrix as in \eqref{eq A0}.
A straightforward computation shows that the first term on the right-hand side is bounded by 
\[
\frac{C}{N_j^2}+C\gamma_{A_j}(x,2r)^2\le C\epsilon_j^2.
\]
The second term is bounded by $C\gamma_{A_j}(x,2r)^2$, and thus also bounded by $C\epsilon_j^2$.

\medskip 

\noindent \textbf{Case 4.} $\rho_j<r\le 2M_j$.

One only needs to observe that in this case, 
\[
W(x,r)\subset\rd\times\Big[\frac{\rho_j}{2},2M_j\Big] \qquad\text{for any }x\in\rd.
\]
But $A_j^*(x,t)=A_j(x,t)$ for $t\in\Big[\frac{\rho_j}{2},2M_j\Big]$. So $\alpha_{A_j^*}(x,r)=\alpha_{A_j}(x,r)$. 
\end{proof}

\subsection{We glue the $A_j^\ast$ on top of each other}

Now we are ready to a function $A^*$ that contains a dilated copy of each $A_j^*$
and contradicts Lemma \ref{BTZthm}.

\begin{defn}[The matrix $A^*$]\label{def A*} 
Let $\rho_j$ and $M_j$ be as in Lemma \ref{lem strip}. 
Let $A_j^*$ be as in Definition \ref{def Aj*}, $j\in\ZZ_+$. 
Choose the $\lambda_j$, $j \geq 1$, by induction, so that $\lambda_1=1$ and 
 \begin{equation}\label{def lambdaj}
     2\lambda_{j+1}2^{N_{j+1}}M_{j+1}< \lambda_j 2^{-N_j}\rho_j,
 \end{equation}
 and define $A^*$ by
 \begin{equation} \label{3b31}
 A^*(x,t)=\begin{cases}
 A_j^*\br{\frac{x}{\lambda_j},\frac{t}{\lambda_j}}, \qquad \lambda_j 2^{-N_j}\rho_j\le t\le \lambda_j 2^{N_j}M_j,\quad j\ge 1,\\
 I\qquad \text{elsewhere}.
 \end{cases}
\end{equation}
 Finally let $L_*:=-\divg A^*\nabla$.
\end{defn}

\begin{rem}
The different regions $\big\{ \lambda_j 2^{-N_j}\rho_j\le t\le\lambda_j 2^{N_j}M_j \big\}$ 
in \eqref{3b31} are disjoint by \eqref{def lambdaj};
furthermore, the gluing is smooth, because since $A_j^*(x,t)=I$ for $t>2^{N_j}M_j$ 
or $t\le 2^{-N_j}\rho_j$, we actually have that
\begin{equation}\label{eq A*equal}
    A^*(x,t)=A_j^*\br{\frac{x}{\lambda_j},\frac{t}{\lambda_j}} \qquad\text{for }t\in \Big[\lambda_j2^{-N_j-1}\rho_j, \lambda_j2^{N_j+1}M_j\Big], \quad j\ge1.
\end{equation}
Also, by tracking the region where $A^*=I$, one sees that for any $x\in\rd$,
\begin{multline}\label{eq alpha=0}
   \alpha_{A^*}(x,r)=\alpha_I(x,r)=0\\
   \text{when } \lambda_{j+1}2^{N_{j+1}+1}M_{j+1}\le r\le\lambda_j2^{-N_j}\rho_j 
   \quad \text{for some }j\ge1\quad
   \text{or } r\ge 2^{N_1+1}M_1.
\end{multline}
\end{rem}

\begin{lem}
The matrix $A^*$ satisfies the vanishing weak DKP condition. 
\end{lem}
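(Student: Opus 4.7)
The goal is to verify both parts of the vanishing weak DKP condition: that $d\mu^*(x,r) := \alpha_{A^*}(x,r)^2\,\frac{dx\,dr}{r}$ is a Carleson measure on $\rd \times (0,\infty)$ and that its Carleson norm on shrinking boxes tends to zero uniformly. The plan is to exploit two structural features built into the construction of $A^*$. First, by the scaling invariance of Lemma~\ref{lem rsc_DKP} combined with \eqref{eq A*equal}, on the $j$th band I have the pointwise identity $\alpha_{A^*}(x,r) = \alpha_{A_j^*}(x/\lambda_j,\, r/\lambda_j)$. Second, the spacing condition \eqref{def lambdaj} is exactly what makes \eqref{eq alpha=0} hold, so that between consecutive bands $\alpha_{A^*}$ vanishes identically. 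Together these let me reduce the computation to a sum of the individual Carleson norms $\cN(A_j^*)$.

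First I would bound $\cN(A_j^*)$ by splitting the defining integral into the three regimes singled out by Lemma~\ref{lem alphaAj*}. The middle regime contributes at most $\cN(A_j) \le \epsilon_j^2$ by \eqref{AjDKP_epsj}; each of the two transition regimes is controlled by $C\epsilon_j^2$ times the logarithmic length $N_j \log 2$ of the relevant $t$-interval, since $\alpha_{A_j^*} \le C\epsilon_j$ there. Summing gives $\cN(A_j^*) \le C\,\epsilon_j^2 N_j = C\,2^{-j}$, and this quantitative smallness is what drives the whole argument.

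Next I would perform the band decomposition on a generic Carleson box $\Delta(x_0,r_0) \times (0,r_0)$. The integral of $\alpha_{A^*}^2\,\frac{ds\,dy}{s}$ splits as a sum over the bands that meet $(0,r_0)$, the portions in the gaps contributing nothing by \eqref{eq alpha=0}. On the $j$th band the change of variables $(u,\tau)=(y/\lambda_j,\,s/\lambda_j)$ produces a Jacobian $\lambda_j^d$ and a new box $\Delta(x_0/\lambda_j,\,r_0/\lambda_j) \times (0,\,r_0/\lambda_j)$ for $\alpha_{A_j^*}$, yielding a contribution of at most $\lambda_j^d (r_0/\lambda_j)^d\,\cN(A_j^*) = r_0^d\,\cN(A_j^*)$. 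Only bands whose lower endpoint $s_j^-$ satisfies $s_j^- < r_0$ can contribute, so I obtain the master inequality
$$r_0^{-d}\,\mu^*(\Delta(x_0,r_0) \times (0,r_0)) \;\le\; \sum_{j:\,s_j^- < r_0} \cN(A_j^*) \;\le\; C\sum_{j:\,s_j^- < r_0} 2^{-j}.$$

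Summing over all $j\ge 1$ gives $\cN(A^*) \le C\sum_{j\ge 1} 2^{-j} < \infty$, so $\mu^*$ is a Carleson measure. For the vanishing trace, the sequence $\{s_j^-\}$ is strictly decreasing by \eqref{def lambdaj}, so given $\eta>0$ I can choose $j_0$ with $C\sum_{j\ge j_0} 2^{-j} < \eta$ and then take $r_0 < s_{j_0-1}^-$, which forces the sum above to run only over $j\ge j_0$. The main place where I expect to have to be careful is the scaling identity at the two edges of each band, where the Whitney box $W(y,s)$ may spill into the adjacent region where $A^* = I$; this causes no trouble because $A_j^*$ itself equals $I$ outside its own band, so the pointwise equality $A^*(x,t) = A_j^*(x/\lambda_j,\,t/\lambda_j)$ in fact holds throughout the slightly enlarged interval recorded in \eqref{eq A*equal}, and \eqref{eq alpha=0} rules out any leakage across the gaps.
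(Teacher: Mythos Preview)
Your proof is correct and follows essentially the same approach as the paper's: both split the Carleson integral into bands, use the scaling identity $\alpha_{A^*}(y,s)=\alpha_{A_j^*}(y/\lambda_j,s/\lambda_j)$ on each band together with Lemma~\ref{lem alphaAj*}, obtain a per-band contribution of order $C\,2^{-j}r_0^d$, and sum the geometric tail. The only difference is organizational: you first package the three-regime estimate of Lemma~\ref{lem alphaAj*} into the single bound $\cN(A_j^*)\le C\epsilon_j^2 N_j = C\,2^{-j}$ and then invoke scaling, whereas the paper re-derives the three pieces $J_1,J_2,J_3$ inline for each $j$ inside the sum; the content is the same.
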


\begin{proof}
Set
\begin{equation}\label{3b36}
\widehat\alpha(r_0) = \sup_{x_0\in\rd}\frac{1}{\abs{\Delta(x_0,r_0)}}
\iint_{T(x_0,r_0)}\alpha_{A^*}(x,r)^2\frac{dxdr}{r}
\end{equation}
for $r_0 > 0$. 
It suffices to show that 
\begin{equation}\label{eq A*vanishing}
\widehat\alpha \text{ is bounded and }
  \lim_{r_0\to 0^+} \widehat\alpha(r_0) = 0.
\end{equation}
Fix any $x_0\in\rd$ and $r_0>0$. Since $\alpha_A^*(x,r)=0$ for $r\ge 2^{N_1+1}M_1$ (see \eqref{eq alpha=0}), 
we can assume that $r_0\le 2^{N_1+1}M_1$. Let us first assume that 
\begin{equation}\label{r0case1}
    \lambda_k 2^{N_k+1}M_k\le r_0\le\lambda_{k-1}2^{-N_{k-1}}\rho_{k-1} \qquad\text{for some }k\in\ZZ, \,k\ge 2.
\end{equation}
We claim that 
\begin{equation}\label{eq A*DKP}
    \iint_{T(x_0,r_0)}\alpha_{A^*}(x,r)^2\frac{dxdr}{r}\le C 2^{-k}\abs{\Delta(x_0,r_0)},
\end{equation}
where the constant $C$ depends only on $d$.
To see this, we write
\begin{multline}\label{eq A*DKP split}
    \iint_{T(x_0,r_0)}\alpha_{A^*}(x,r)^2\frac{dxdr}{r}\le \sum_{j=k}^\infty\bigg\{\int_{\Delta(x_0,r_0)}\int_{\lambda_j2^{-N_j}\rho_j}^{\lambda_j\rho_j}\alpha_{A^*}(x,r)^2\frac{drdx}{r}\\
    +\int_{\Delta(x_0,r_0)}\int_{\lambda_j\rho_j}^{2\lambda_jM_j}\alpha_{A^*}(x,r)^2\frac{drdx}{r}+\int_{\Delta(x_0,r_0)}\int_{2\lambda_jM_j}^{\lambda_j2^{N_j+1}M_j}\alpha_{A^*}(x,r)^2\frac{drdx}{r}\bigg\}\\
    =:\sum_{j=k}^\infty\{J_1+J_2+J_3\},
\end{multline}
noticing \eqref{eq alpha=0}. 
Set \[\wt A_j(x,t):=A_j^*\br{\frac{x}{\lambda_j},\frac{t}{\lambda_j}}.\]
Then by \eqref{eq A*equal}, 
\begin{equation}\label{eq A*=wtAj}
    A^*(x,t)=\wt A_j(x,t) \qquad\text{for }t\in \Big[\lambda_j2^{-N_j-1}\rho_j, \lambda_j2^{N_j+1}M_j\Big], \quad j\ge1.
\end{equation}
Notice that \eqref{rsc_alpha} asserts that 
\begin{equation}\label{eq wtAj alpha}
    \alpha_{\wt A_j}(x,r)=\alpha_{A_j^*}\br{\frac{x}{\lambda_j},\frac{r}{\lambda_j}} \qquad\text{for any }x\in\rd,\, r>0.
\end{equation}
For any $x\in\rd$ and $\lambda_j2^{-N_j}\rho_j\le r\le \lambda_j2^{N_j+1}M_j$, the Whitney cube $W(x,r)\subset\rd\times\Big[\lambda_j2^{-N_j-1}\rho_j, \lambda_j2^{N_j+1}M_j\Big]$. So \eqref{eq A*=wtAj} and \eqref{eq wtAj alpha} imply that
\[
\alpha_{A^*}(x,r)=\alpha_{A_j^*}\br{\frac{x}{\lambda_j},\frac{r}{\lambda_j}} \qquad\text{for }x\in\rd,\, \lambda_j2^{-N_j}\rho_j\le r\le \lambda_j2^{N_j+1}M_j,\quad j\ge1.
\]
By Lemma \ref{lem alphaAj*}, for any $x\in\rd$,
\begin{equation}\label{eq alphaA*}
    \alpha_{A^*}(x,r)\begin{cases}
    \le C \epsilon_j, 
    \qquad \lambda_j2^{-N_j}\rho_j\le r\le \lambda_j\rho_j\text{ or } 2\lambda_jM_j\le r\le \lambda_j2^{N_j+1}M_j, \\
=\alpha_{A_j}\br{\frac{x}{\lambda_j},\frac{r}{\lambda_j}}, \qquad \rho_j<r\le 2M_j.
    \end{cases}
\end{equation}
Now we are ready to estimate \eqref{eq A*DKP split}. First, $J_1$ and $J_3$ can be estimated similarly. By \eqref{eq alphaA*}, 
\[
J_1\le C 
\int_{\Delta(x_0,r_0)}\epsilon_j^2N_jdx\le C\epsilon_j\abs{\Delta(x_0,r_0)},
\]
and $J_3\le C\epsilon_j\abs{\Delta(x_0,r_0)}$. For $J_2$, by a change of variables, 
\[
    J_2=\int_{\Delta(x_0,r_0)}\int_{\lambda_j\rho_j}^{2\lambda_jM_j}\alpha_{A_j}\br{\frac{x}{\lambda_j},\frac{r}{\lambda_j}}^2\frac{drdx}{r}
    =\lambda_j^d\int_{\Delta\br{\frac{x_0}{\lambda_j},\frac{r_0}{\lambda_j}}}\int_{\rho_j}^{2M_j}\alpha_{A_j}(y,s)^2\frac{dsdy}{s}.
\]
Recall that $j\ge k$. Our definition of $\lambda_j$'s and our assumption \eqref{r0case1} on $r_0$ imply that  $\lambda_j2^{N_j}M_j\le\lambda_k2^{N_k}M_k\le r_0$ for all $j\ge k$. Hence, $M_j\le 2^{-N_j}\frac{r_0}{\lambda_j}\le \frac{r_0}{\lambda_j}$ for all $j\ge k$, which implies that 
\[
J_2\le \lambda_j^d\int_{\Delta\br{\frac{x_0}{\lambda_j},\frac{r_0}{\lambda_j}}}\int_{0}^{\frac{2r_0}{\lambda_j}}\alpha_{A_j}(y,s)^2\frac{dsdy}{s}.
\]
Recall that $\cN(A_j)\le \epsilon^2_j$, so 
\[
J_2\le\lambda_j^d\epsilon_j^2\abs{\Delta\br{\frac{x_0}{\lambda_j},\frac{4r_0}{\lambda_j}}}\le C\epsilon_j^2\abs{\Delta(x_0,r_0)}.
\]
Combining these estimates with \eqref{eq A*DKP split}, we obtain that 
\begin{multline*}
    \iint_{T(x_0,r_0)}\alpha_{A^*}(x,r)^2\frac{dxdr}{r}\le C\sum_{j\ge k}\br{\epsilon_j+\epsilon_j^2}\abs{\Delta(x_0,r_0)}\le C\abs{\Delta(x_0,r_0)}\sum_{j\ge k}2^{-j}\\
    =C2^{-k}\abs{\Delta(x_0,r_0)},
\end{multline*}
as desired.

Next consider the case when    
\begin{equation}\label{r0case2}
    \lambda_k 2^{-N_k}\rho_k\le r_0\le\lambda_{k}2^{N_k+1}M_k 
    \qquad\text{for some } k > 0. 
\end{equation}
We claim that \eqref{eq A*DKP} also holds in this case. Again, splitting the left-hand side of \eqref{eq A*DKP}, we get that
\begin{multline*}
    \iint_{T(x_0,r_0)}\alpha_{A^*}(x,r)^2\frac{dxdr}{r}\le \sum_{j={k+1}}^\infty\int_{\Delta(x_0,r_0)}\int_{\lambda_j2^{-N_j}\rho_j}^{\lambda_j2^{N_j+1}M_j}\alpha_{A^*}(x,r)^2\frac{drdx}{r}\\
    +\int_{\Delta(x_0,r_0)}\int_{\lambda_k2^{-N_k}\rho_k}^{r_0}\alpha_{A^*}(x,r)^2\frac{drdx}{r}=:\br{\sum_{j=k+1}^\infty J_1'}+J_2'.
\end{multline*}
Observe that
\[
\sum_{j=k+1}^\infty J_1'=\int_{\Delta(x_0,r_0)}\int_0^{\lambda_{k+1}2^{N_{k+1}+1}M_{k+1}}\alpha_{A^*}(x,r)^2\frac{drdx}{r}.
\]
So the previous case applies, and \[\sum_{j=k+1}^\infty J_1'\le C2^{-k-1}\abs{\Delta(x_0,r_0)}.\] 
Turning to $J_2'$, one can discuss 3 situations:
\begin{enumerate}
    \item $\lambda_k 2^{-N_k}\rho_k\le r_0\le\lambda_k\rho_k$;
    \item $\lambda_k \rho_k< r_0\le2\lambda_kM_k$;
    \item $2\lambda_k M_k< r_0\le \lambda_k 2^{N_k+1}M_k$.
\end{enumerate}
Using our estimates \eqref{eq alphaA*} for $\alpha_{A^*}(x,r)$, one can show as in the previous case that in all situations, we have that $J_2\le C2^{-k}\abs{\Delta(x_0,r_0)}$. Therefore, \eqref{eq A*DKP} holds for any $r_0$ that satisfies \eqref{r0case1} or \eqref{r0case2}. Since $x_0\in\rd$ is arbitrary, this implies that $\cN(A^*)\le C$. Moreover, since 
\(
\lambda_{k-1}2^{-N_{k-1}}\rho_{k-1}\to 0\) and $\lambda_{k}2^{N_k+1}M_k\to 0
$ as $k\to\infty$, \eqref{eq A*DKP} implies \eqref{eq A*vanishing}, as desired.
\end{proof}

\begin{lem}
Let $A_j$, $L_j=-\divg{A_j\nabla}$, and $E_j\subset Q_0$ be as fixed before, such that \eqref{AjDKP_epsj} and \eqref{bigAinfty} hold. 
Let the $\lambda_j$,  $A^*$, and $L_*$ be 
as in Definition \ref{def A*}. Then
\begin{equation}\label{eq bigAinftyL*}
 \abs{\frac{\omega^{\infty}_{L_*}(\lambda_jE_j)}{\omega_{L_*}^{\infty}(\lambda_j\Delta_0)}
 -\frac{\sigma(\lambda_jE_j)}{\sigma(\lambda_j\Delta_0)}}\ge\frac{\delta_0}{2}, 
 \qquad j \geq 1. 
\end{equation}
\end{lem}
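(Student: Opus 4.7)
The plan is to reduce to Lemma \ref{lem strip} by undoing the dilation by $\lambda_j$ that appears in the definition of $A^*$. Fix $j \geq 1$ and set $\widetilde{A}(X) := A^*(\lambda_j X)$, with the associated operator $\widetilde{L} = -\divg(\widetilde{A}\nabla)$. The key observation is that on the band $\{2^{-N_j}\rho_j \le t \le 2^{N_j}M_j\}$, the definition \eqref{3b31} of $A^*$ and the smooth-gluing identity \eqref{eq A*equal} give
\[
\widetilde{A}(x,t) \;=\; A^*(\lambda_j x,\lambda_j t) \;=\; A_j^*(x,t),
\]
and in particular $\widetilde{A}(x,t) = A_j(x,t)$ for $t \in (\rho_j/2,2M_j]$, which contains the interval $(\rho_j,M_j)$ required by Lemma \ref{lem strip}.

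I would next verify the two remaining hypotheses of Lemma \ref{lem strip}. Ellipticity is immediate: $A^*$ is $\mu_0$-elliptic by construction (each $A_j^*$ is a convex combination of $A_j$ and $I$, hence $\mu_0$-elliptic), and so is any dilate $\widetilde{A}$. For the Carleson norm, Lemma \ref{lem rsc_DKP} (applied with $\lambda = \lambda_j$, $z=0$) yields $\cN(\widetilde{A}) = \cN(A^*)$, and the preceding lemma has just shown that $A^*$ satisfies the (vanishing) weak DKP condition, so $\cN(A^*) \leq C_0$ for a universal constant $C_0$. Thus Lemma \ref{lem strip} applies to $\widetilde{A}$ and gives
\begin{equation}\label{prop-eq1}
\abs{\frac{\omega^{\infty}_{\widetilde{L}}(E_j)}{\omega^{\infty}_{\widetilde{L}}(\Delta_0)} - \frac{\sigma(E_j)}{\sigma(\Delta_0)}} \;\geq\; \frac{\delta_0}{2}.
\end{equation}

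Finally I would transfer \eqref{prop-eq1} back to $L_*$ using the scaling Lemma \ref{lem rsc_eminfty} with $\lambda = \lambda_j$ and $z = 0$. For any Borel set $F \subset \R^d$ we have
\[
\omega^{\infty}_{L_*}(\lambda_j F) \;=\; \lambda_j^{d-1}\, U_*(0,\lambda_j)\, \omega^{\infty}_{\widetilde{L}}(F),
\]
where $U_*$ is the Green function with pole at infinity for $L_*$. The common factor $\lambda_j^{d-1}U_*(0,\lambda_j)$ cancels in the ratio, so
\[
\frac{\omega^{\infty}_{L_*}(\lambda_j E_j)}{\omega^{\infty}_{L_*}(\lambda_j \Delta_0)}
\;=\;
\frac{\omega^{\infty}_{\widetilde{L}}(E_j)}{\omega^{\infty}_{\widetilde{L}}(\Delta_0)}.
\]
Since Lebesgue measure also scales homogeneously, $\sigma(\lambda_j E_j)/\sigma(\lambda_j \Delta_0) = \sigma(E_j)/\sigma(\Delta_0)$, and combining these two equalities with \eqref{prop-eq1} yields \eqref{eq bigAinftyL*}.

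The only non-routine step is the verification that $\widetilde{A}$ satisfies the hypotheses of Lemma \ref{lem strip}, and this is precisely why the construction of $A^*$ was designed so that $A^*$ restricted to the $j$-th band is an exact rescaling of $A_j^*$, and why $A_j^*$ was built to coincide with $A_j$ on a strip strictly larger than $(\rho_j,M_j)$. With those design choices in place, the argument is a bookkeeping exercise in scaling.
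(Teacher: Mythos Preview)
Your proof is correct and follows essentially the same route as the paper: define $\wt A(X)=A^*(\lambda_j X)$, observe from the construction of $A^*$ and $A_j^*$ that $\wt A=A_j$ on the strip $(\rho_j,M_j)$, apply Lemma~\ref{lem strip}, and then undo the dilation via Lemma~\ref{lem rsc_eminfty}. Your explicit verification of the ellipticity and Carleson-norm hypotheses of Lemma~\ref{lem strip} is slightly more detailed than the paper's version, but the argument is the same.
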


\begin{proof}
Recall that $\sigma$ is the Lebesgue measure on $\R^d$.
This lemma will be an immediate consequence of Lemma \ref{lem strip} and the 
rescaling property of the elliptic measure with pole at infinity. 
In fact, for any fixed $j \geq 1$, set 
\[
\wt A(x,t):=A^*(\lambda_jx,\lambda_jt)=A^* \big(T_{\lambda_j}^0(X)\big)
\]
where we use the notation \ref{def Tlz} for the linear transformation, and $X=(x,t)$.
Then for this fixed $j$, the definitions of $A^*$ and $A_j^*$ assert that
\[
\wt A(x,t)=A_j(x,t) \qquad\text{for }x\in\rd, \, \rho_j\le t\le M_j.
\]
Recall that our first move was to take $\Delta_j = \Delta_0$.
Applying Lemma \ref{lem strip} to $\wt A^*$, 
one gets that
\[
\abs{\frac{\omega^{\infty}_{\wt L}(E_j)}{\omega_{\wt L}^{\infty}(\Delta_0)}
-\frac{\sigma(E_j)}{\sigma(\Delta_0)}}\ge\frac{\delta_0}{2},
\]
where $\wt L=-\divg \wt A\nabla$. By Lemma \ref{lem rsc_eminfty}, 
\[
\frac{\omega_{\wt L}^\infty(E_j)}{\omega_{\wt L}^\infty(\Delta_0)}=\frac{\omega_{L_*}^\infty(\lambda_jE_j)}{\omega_{L_*}^\infty(\lambda_j \Delta_0)}.
\]
So \eqref{eq bigAinftyL*} follows. 
\end{proof}
\ms
To summarize, we showed that if Theorem \ref{t1b1} 
is not true in $\reu$, then we can construct an elliptic operator $L_*$ that satisfies the vanishing weak DKP condition, 
a sequence of numbers $\set{\lambda_j}_{j\in\ZZ_+}$ that approach 0 as $j\to\infty$, and a sequence of Borel sets 
$\set{E_j}_j$ contained in the unit ball $\Delta_0 \subset \R^d$, 
such that \eqref{eq bigAinftyL*} holds.

On the other hand, Lemma \ref{BTZthm} says that $\omega _{\wt L_*}^\infty$
is absolutely continuous with respect to $\sigma$, with a density $k_{L_*}^\infty$
such that $\log k_{L_*}^\infty\in VMO(\rd)$.
The desired contradiction follows at once from the following lemma, 
which will be proved in Section \ref{Sbmo}.

\begin{lem}\label{lem VMOtosmAinfty}
Let $\omega$ be a positive locally finite Borel measure on $\R^d$, suppose that it is absolutely
continuous with  respect to the Lebesgue measure $\sigma$, and denote by 
$k = \frac{d \omega}{d \sigma}$ its Radon-Nikodym density. 
Assume $\log k\in VMO(\rd)$. Then for any $\delta>0$, there exists $\gamma>0$ 
such that for any ball $\Delta = \Delta(x,r) \subset\rd$ with $0< r \le \gamma$, and any 
Borel set $E\subset \Delta$,
\begin{equation} \label{3b48}
\abs{\frac{\omega(E)}{\omega(\Delta)}-\frac{\sigma(E)}{\sigma(\Delta)}}<\delta.
\end{equation}
\end{lem}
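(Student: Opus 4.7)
The plan is to deduce \eqref{3b48} from the fact that the $VMO$ hypothesis forces $\log k$ to have small $BMO$ norm on any sufficiently small ball, and then to convert this smallness into closeness of the two normalized measures by a direct Jensen/John--Nirenberg computation.

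\smallskip

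First I would unpack the $VMO$ hypothesis. Given $\delta>0$, I pick $\eta=\eta(\delta)>0$ to be chosen at the end, and use the definition of $VMO$ (in its standard localized form) to find $\gamma>0$ such that for every ball $\Delta\subset\R^d$ with $r(\Delta)\le\gamma$ and every sub-ball $\Delta'\subseteq\Delta$,
\[
\frac{1}{|\Delta'|}\int_{\Delta'}|\log k-(\log k)_{\Delta'}|\,d\sigma \le \eta,
\]
so that in particular $\|\log k\|_{BMO(\Delta)}\le\eta$. Fix such a $\Delta$, a Borel set $E\subset\Delta$, set $f=\log k$, $f_\Delta=\fint_\Delta f\,d\sigma$, and $g=f-f_\Delta$. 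Then $g_\Delta=0$ and $\|g\|_{BMO(\Delta)}\le\eta$. Since the constant $e^{f_\Delta}$ cancels,
\[
\frac{\omega(E)}{\omega(\Delta)}=\frac{\int_E e^f\,d\sigma}{\int_\Delta e^f\,d\sigma}=\frac{\int_E e^g\,d\sigma}{\int_\Delta e^g\,d\sigma}.
\]

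\smallskip

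Next I would reduce \eqref{3b48} to an integral comparison of $e^g$ with the constant $1$. Writing $\overline{e^g}:=\fint_\Delta e^g\,d\sigma$, a direct computation gives
\[
\left|\frac{\omega(E)}{\omega(\Delta)}-\frac{\sigma(E)}{\sigma(\Delta)}\right|
=\left|\frac{1}{|\Delta|}\int_E\left(\frac{e^g}{\overline{e^g}}-1\right)d\sigma\right|
\le \frac{1}{\overline{e^g}}\cdot\frac{1}{|\Delta|}\int_\Delta|e^g-\overline{e^g}|\,d\sigma,
\]
and since $g_\Delta=0$, Jensen's inequality yields $\overline{e^g}\ge 1$. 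Using $|e^g-\overline{e^g}|\le |e^g-1|+|\overline{e^g}-1|$, the problem reduces to showing
\[
\frac{1}{|\Delta|}\int_\Delta|e^g-1|\,d\sigma \le C\eta
\]
whenever $\|g\|_{BMO(\Delta)}\le\eta$ and $\eta$ is small.

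\smallskip

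This last estimate is where the technical work lies, and is the main obstacle. I would obtain it from the John--Nirenberg inequality in its small-norm form: there is an absolute constant $c>0$ so that $\eta<c$ implies
\[
\frac{1}{|\Delta|}\int_\Delta e^{2|g|}\,d\sigma \le C_0,
\]
a dimensional constant. Combined with the elementary pointwise bound $|e^g-1|\le |g|+\tfrac12 g^2 e^{|g|}$, Cauchy--Schwarz gives
\[
\frac{1}{|\Delta|}\int_\Delta |e^g-1|\,d\sigma
\le \fint_\Delta|g|\,d\sigma + \tfrac12\Bigl(\fint_\Delta g^4\,d\sigma\Bigr)^{1/2}\Bigl(\fint_\Delta e^{2|g|}\,d\sigma\Bigr)^{1/2}
\le C\eta+C\eta^2\le C\eta,
\]
where the $L^4$ bound on $g$ is another standard consequence of small-norm John--Nirenberg. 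In particular $|\overline{e^g}-1|\le C\eta$, so $\overline{e^g}\le 1+C\eta$, and plugging back gives $|\omega(E)/\omega(\Delta)-\sigma(E)/\sigma(\Delta)|\le C'\eta$. Choosing $\eta=\delta/C'$ (and the corresponding $\gamma$ from Step~1) yields \eqref{3b48} and completes the proof.
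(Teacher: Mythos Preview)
Your proof is correct and rests on the same underlying tool as the paper's, namely the John--Nirenberg inequality applied to $g=\log k-(\log k)_\Delta$, but the route you take is genuinely different and somewhat more streamlined. The paper works on the smallest cube $Q_0\supset\Delta$, normalizes so that $\fint_{Q_0}\log k=0$, and then performs a three-region level-set decomposition $Q_0=Q_1\cup Q_2\cup Q_3$ according to whether $\log k$ is below $-A\varepsilon$, in $[-A\varepsilon,A\varepsilon]$, or above $A\varepsilon$; Chebyshev together with John--Nirenberg controls $\sigma(Q_1\cup Q_3)$, and a dyadic sum controls $\omega(Q_3)$, after which one optimizes over the threshold $A$. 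You instead bypass the level-set splitting entirely by using the pointwise Taylor bound $|e^g-1|\le|g|+\tfrac12 g^2e^{|g|}$ and a single application of Cauchy--Schwarz, feeding in the $L^4$ and exponential-integrability consequences of John--Nirenberg directly. Your approach is shorter and avoids the auxiliary parameter $A$; the paper's approach is more explicit about where the smallness is spent and would adapt more transparently to settings where only weak-type John--Nirenberg information is available. Either way, the conclusion and the dependence of $\gamma$ on $\delta$ are the same.
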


This completes our proof of Theorem \ref{t1b1} in $\reu$, modulo 
Lemmas \ref{lem em_limit} and \ref{lem VMOtosmAinfty}.

\section{Small Lipschitz graphs}
\label{SLip}
In this section we use a change of variable to prove Theorem \ref{t1b1} in when 
$\Gamma$ is the graph of a Lipschitz function $\varphi : \R^d \to \R$, with small Lipschitz constant.
More precisely, we assume that 
\begin{equation} \label{4b1}
||\nabla \varphi ||_{\infty} \leq \varepsilon,
\end{equation}
set
\begin{equation} \label{5b2}
\Gamma = \big\{ (x,\varphi(x)) \in \R^{d+1}\, ; \, x\in \R^d \big\},
\end{equation}
and denote by $\Omega$ the  connected component of $\R^{d+1}\sm \Gamma$
that lies above $\Gamma$. We then let $A$ and $L$ be as in Theorem \ref{t1b1},
and we need to show that if $\varepsilon$ is small enough, depending  on $\delta$, $d$, 
and the ellipticity constant $\mu_0$, the associated elliptic measure $\omega_L^\infty$ lies in
$A_\infty(\sigma,\delta)$.

 We now define a mapping from $\reu$ to $\om$ which is the same as the one that is used in \cite{kenig2001dirichlet}, originally due to Dahlberg, Kenig, and Stein.
For $(x,t)\in\reu$, let $\eta_t(x)=t^{-d}\eta(x/t)$, where $\eta$ is a nonnegative radial 
$C^\infty$ function supported in $\set{\abs{x}\le 1/2}$ and such that $\int_{\R^d} \eta = 1$. 
Define a mapping 
\begin{equation}\label{def rho}
    \rho: \reu \to \Omega, \quad \rho(x,t)=(x,c_0t+F(x,t))
\end{equation}
with $F(x,t)=\eta_t*\vp(x)
= \int \eta(z) \vp(x-tz) dz$,
and  $c_0=1+c_d\norm{\nabla\vp}_\infty$, where \begin{equation}\label{def.cd}
    c_d=\int_{\rd}\abs{\eta(y)}\abs{y}dy.
\end{equation}
This choice of $c_0$ guarantees that $\rho$ is a one-to-one  
bi-Lipschitz mapping of $\reu$ onto $\Omega$. In fact, we have
\begin{equation} \label{4a4}
\nabla\rho(x,t)=\begin{pmatrix}
I & \nabla_x F(x,t)\\
0 & c_0+\partial_t F(x,t)
\end{pmatrix},
\end{equation}
and 
 \(   \det\nabla\rho(x,t)= c_0+\partial_t F(x,t)\).
Since 
\begin{equation} \label{4a5}
\abs{\partial_tF(x,t)}=\abs{-\int_{\rd}\eta(z)z\cdot(\nabla\vp)(x-tz)dz}
\le c_d\norm{\nabla\vp}_{L^\infty(\rd)},
\end{equation}
we get that 
\begin{equation}\label{eq detdrho}
    1\le\det\nabla\rho(x,t)\le 1+2c_d\norm{\nabla\vp}_\infty.
\end{equation}
Incidentally, we have that 
\begin{equation}\label{eq gradxF}
    \abs{\nabla_xF(x,t)}=\abs{\int_{\rd}\eta(z)(\nabla\vp)(x-zt)dz}\le c_d'\norm{\nabla\vp}_{L^\infty(\rd)},
\end{equation}
where $c_d'=\int\abs{\eta(y)}dy$.

Another important property of the mapping $\rho$ is that $\abs{\nabla^2\rho(x,t)}^2t\,dxdt$ 
is a Carleson measure on $\reu$. This property has been mentioned in \cite{kenig2001dirichlet}, but no proof 
nor an estimate for the Carleson norm has been given. 
We give the control of the Carleson norm and sketch the easy proof.

\begin{lem}\label{lem 2ndrho.Carl}
For $(x,t)\in\rd\times\Real_+$, the measure $\mu$ defined by the density 
\[
d\mu(x,t)=\abs{\nabla_{x,t}^2F(x,t)}^2t\,dxdt
\]
is a Carleson measure on $\reu$, and 
\[
\norm{\mu}_{\mathcal C}\le C\norm{\nabla\vp}_{BMO(\rd)}^2\le C\norm{\nabla\vp}_{L^\infty(\rd)}^2,
\]
where the constant $C$ depends only $\eta$ and $d$. 
\end{lem}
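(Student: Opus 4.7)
The plan is to exhibit the second derivatives of $F$ as Littlewood--Paley type convolutions of $\nabla\varphi$, and then invoke the classical Fefferman--Stein characterization of $BMO$ via Carleson measures.

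First, using that convolution commutes with differentiation and that $(\eta_t * g)(x) = \int \eta(z) g(x-tz)\, dz$, I would write each entry of $\nabla^2_{x,t}F$ in the form $t^{-1}(\theta_t * \partial_l\varphi)(x)$, where $\theta \in C_c^\infty(\mathbb R^d)$ has mean zero. For the purely spatial derivatives, integration by parts in $z$ gives
\[
\partial_{x_i}\partial_{x_j}F(x,t) = \frac{1}{t} \bigl( (\partial_j\eta)_t * \partial_i\varphi \bigr)(x),
\]
and $\int \partial_j\eta = 0$ is automatic. For the mixed and pure $t$-derivatives, a direct computation using $\eta_t(y) = t^{-d}\eta(y/t)$ yields $\partial_t \eta_t = t^{-1}\tilde\eta_t$ with $\tilde\eta(u) = -d\eta(u) - u\cdot\nabla\eta(u)$, and $\int \tilde\eta = 0$ by integration by parts. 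Combining this with the identities $\partial_{x_i} F = \eta_t * \partial_i\varphi$ and $\partial_t F = \sum_i (\psi_i)_t * \partial_i\varphi$, where $\psi_i(z) = -z_i\eta(z)$ (radial $\eta$ gives $\int\psi_i = 0$), produces analogous representations
\[
\partial_t\partial_{x_i}F = \frac{1}{t}\,\tilde\eta_t * \partial_i\varphi,
\qquad
\partial_t^2 F = \frac{1}{t}\sum_i \tilde\psi_{i,t} * \partial_i\varphi,
\]
with $\int\tilde\psi_i = 0$ by the same identity. All bump functions are smooth, compactly supported (inside $\{|z|\le 1/2\}$), and mean-zero, with bounds depending only on $\eta$ and $d$.

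Substituting these representations gives a pointwise estimate
\[
|\nabla^2_{x,t} F(x,t)|^2 \, t \;\le\; \frac{C}{t}\sum_{l,\theta}\, |(\theta_t * \partial_l\varphi)(x)|^2,
\]
where the sum runs over the finitely many bump functions $\theta \in \{\partial_j\eta, \tilde\eta, \tilde\psi_i\}$. For each such $\theta$, the classical Fefferman--Stein theorem (see e.g.\ Stein, \emph{Harmonic Analysis}, Ch.~IV) states that if $\theta$ is smooth, compactly supported with $\int\theta = 0$, then for any $g \in BMO(\mathbb R^d)$ the measure $|\theta_t * g|^2 \tfrac{dx\,dt}{t}$ is a Carleson measure on $\mathbb R^{d+1}_+$ with norm at most $C\|g\|_{BMO(\mathbb R^d)}^2$, where $C$ depends only on $d$ and $\theta$. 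Applying this with $g = \partial_l\varphi$ and summing yields
\[
\iint_{T(x_0,r_0)} |\nabla^2_{x,t} F(x,t)|^2\, t\, dx\, dt
\;\le\; C \|\nabla\varphi\|_{BMO(\mathbb R^d)}^2 \,|\Delta(x_0,r_0)|,
\]
which is the desired Carleson bound. The final bound $\|\nabla\varphi\|_{BMO}\le 2\|\nabla\varphi\|_\infty$ is immediate.

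There is no serious obstacle: the only subtlety is the bookkeeping to check that every second derivative of $F$ can be written with the factor $t^{-1}$ in front and a mean-zero bump, which is what creates the correct Littlewood--Paley scaling so that the $BMO$ characterization applies. Once that is in place, the estimate is a direct invocation of a classical theorem.
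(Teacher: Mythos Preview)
Your proof is correct and follows essentially the same approach as the paper: both express every second derivative of $F$ as $t^{-1}(\theta_t * \partial_l\varphi)$ with a smooth, compactly supported, mean-zero bump $\theta$, and then invoke the classical Carleson-measure characterization of $BMO$. You supply more of the bookkeeping (the explicit formulas for $\tilde\eta$, $\psi_i$, $\tilde\psi_i$ and the verification of their vanishing means), but the idea and the key lemma invoked are the same.
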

\begin{proof}[Sketch of Proof] The lemma follows from a Carleson measure characterization of $BMO$ functions 
(see e.g. \cite{grafakos2014modern} Theorem 3.3.8). In fact, all the second derivatives of $F(x,t)$ can be expressed 
in the form of $t^{-1}\theta_t*\nabla\vp(x)$, where $\theta_t(x)=t^{-d}\theta(x/t)$ satisfies $\int_\rd\theta(x)dx=0$, 
$\abs{\theta(x)}\le C(1+\abs{x})^{-n-\delta}$ for some $C$, $\delta\in(0,\infty)$, and 
\[
\sup_{\xi\in\rd}\int_0^\infty\abs{\hat\theta(s\xi)\frac{ds}{s}}<\infty.
\]
Therefore, 
\[
\norm{\abs{\theta_t*\nabla\vp(x)}^2\frac{dxdt}{t}}_{\mathcal{C}}\le C\norm{\nabla\vp}_{BMO(\rd)}^2,
\]
which gives the desired estimate.
\end{proof}

One can check 
(see for instance, in a slightly different context, (1.42) and its proof in Lemma 6.17 in \cite{david2019dahlberg})
that if $u$ is a solution of $Lu=-\divg{A\nabla u}=0$ in $\Omega$, then $v=u\circ\rho$ is a solution of $\wt Lv=-\divg{\wt A\nabla v}=0$ in $\reu$, where 
\begin{equation}\label{def wtA}
  \wt A(x,t) = \det(\nabla\rho(x,t)) (\nabla\rho(x,t)^{-1})^T A(\rho(x,t)) \nabla\rho(x,t)^{-1}.  
\end{equation}
It will be useful to write out the matrix $\wt A(x,t)$. A simple computation shows that
\begin{multline}\label{eq wtA}
    \wt A(x,t)=\begin{pmatrix}
    I & \mathbf{0}\\
    -\nabla_xF(x,t) & 1
    \end{pmatrix}
    A(\rho(x,t))
    \begin{pmatrix}
    I & -\frac{\nabla_xF(x,t)}{c_0+\d_tF(x,t)}\\
    0 & (c_0+\d_t F(x,t))^{-1}
    \end{pmatrix}\\
    =: P(x,t)A(\rho(x,t)) Q(x,t).
\end{multline}

\begin{lem}\label{lem cNwtA} If $\norm{\nabla\vp}_\infty<\epsilon<\frac{1}{100(1+c_d)}$, where $c_d$ is as in \eqref{def.cd}, then for $\wt A$ defined in \eqref{def wtA},
\[
\cN(\wt A)\le C\norm{\nabla\vp}_\infty^2 + C\cN(A),
\]
where $C$ depends on $d$ and the ellipticity constant of $A$. 
\end{lem}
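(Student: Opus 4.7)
The plan is to estimate $\alpha_{\wt A}(x,r)$ pointwise using the factorization $\wt A = P\cdot A(\rho)\cdot Q$ from \eqref{eq wtA}, then integrate. Fix $x_0 \in \rd$ and $r_0>0$; we must bound $r_0^{-d}\iint_{T(x_0,r_0)}\alpha_{\wt A}(x,r)^2\frac{dx\,dr}{r}$. For each scale $(x,r)$, take $P_0$ and $Q_0$ to be the averages of $P$ and $Q$ over $W(x,r)$, and pick a constant matrix $A_0$ that nearly realizes the infimum defining $\alpha_A(\hat x, Cr)^2$, where $\hat x = (x,\vp(x)) \in \Gamma$. The triangle inequality together with the uniform bounds $|P|, |Q|, |A(\rho)| \le C$ (using $\norm{\nabla\vp}_\infty < \eps$ and the ellipticity of $A$) gives
\begin{equation*}
|\wt A(y,s) - P_0 A_0 Q_0|^2 \le C\bigl(|P(y,s)-P_0|^2 + |A(\rho(y,s))-A_0|^2 + |Q(y,s)-Q_0|^2\bigr).
\end{equation*}
This is legitimate since the variant $\wt\alpha_{\wt A}$ that drops the $\mu_0$-ellipticity requirement on the test matrix is equivalent to $\alpha_{\wt A}$, even though $P_0 A_0 Q_0$ need not itself be $\mu_0$-elliptic.

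For the $P$ and $Q$ oscillation terms, the entries of $P$ and $Q$ are smooth bounded functions of $\nabla_x F$ and $\partial_t F$ (the denominator $c_0+\partial_t F \ge 1$ keeps the reciprocal in $Q$ harmless), so that $|\nabla P| + |\nabla Q| \le C|\nabla^2 F|$. Poincar\'e's inequality on the cube $W(x,r)$ then yields
\begin{equation*}
\fiint_{W(x,r)}\bigl(|P-P_0|^2 + |Q-Q_0|^2\bigr) \le C r^2 \fiint_{W(x,r)}|\nabla^2 F|^2.
\end{equation*}
Plugging this into the Carleson integral, writing $r^2 \fiint_{W(x,r)} \cdot \frac{1}{r} = \frac{1}{r^d}\int_{W(x,r)}$, and swapping the order of integration (for fixed $(y,s)$ the admissible radii satisfy $r \in [s, 2s)$ and $x \in \Delta(y,r) \cap \Delta(x_0, r_0)$, yielding a factor $\int_s^{2s}\frac{|\Delta(y,r)|}{r^d}\,dr \sim s$),
\begin{equation*}
\iint_{T(x_0,r_0)} r^2 \fiint_{W(x,r)}|\nabla^2 F|^2 \frac{dx\,dr}{r} \le C\iint_{T(x_0,2r_0)}|\nabla^2 F(y,s)|^2 s\,dy\,ds \le C r_0^d \norm{\nabla\vp}_\infty^2,
\end{equation*}
where the last step is Lemma \ref{lem 2ndrho.Carl}.

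For the middle term, the bi-Lipschitz map $\rho$, whose Jacobian is bounded above and below by \eqref{eq detdrho}, sends each $W(x,r)$ into a region of volume $\sim r^{d+1}$ sitting inside a Whitney box $W_\Omega(\hat x, Cr) \subset \Omega$: indeed the vertical offset $c_0 s + F(y,s) - \vp(y)$ stays in the range $[cs, Cs]$ uniformly on $W(x,r)$ by \eqref{4a5} and $\norm{\nabla\vp}_\infty < \eps$. The change of variable $Y = \rho(y,s)$ therefore gives
\begin{equation*}
\fiint_{W(x,r)}|A(\rho(y,s)) - A_0|^2\,dy\,ds \le C\fiint_{W_\Omega(\hat x, Cr)}|A(Y)-A_0|^2\,dY \le C\alpha_A(\hat x, Cr)^2
\end{equation*}
by the choice of $A_0$. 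Integrating in $(x,r)$ and changing variable $x \mapsto \hat x$ along the small Lipschitz graph $\Gamma$ (again a bi-Lipschitz map with bounded Jacobian) bounds this contribution by $Cr_0^d\cN(A)$. Summing the three pieces yields $\cN(\wt A) \le C\norm{\nabla\vp}_\infty^2 + C\cN(A)$.

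The main bookkeeping obstacle is the two change-of-variable arguments: verifying that $\rho(W(x,r))$ really sits inside a Whitney box $W_\Omega(\hat x, Cr)$ of comparable scale (so that the $\alpha_A$ numbers on $\Omega$ actually control the middle term), and carrying out the Fubini swap in the Carleson integral for the second-derivative term with the correct power of $s$ emerging so that Lemma \ref{lem 2ndrho.Carl} can be applied. Both become transparent once one notes that the only relevant smallness is $\norm{\nabla\vp}_\infty < \eps$ with $\eps < 1/(100(1+c_d))$, which is exactly the hypothesis of the lemma.
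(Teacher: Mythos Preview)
Your proof is correct and follows essentially the same route as the paper: factor $\wt A = P\,A(\rho)\,Q$, choose a constant comparison matrix $P_0 A_0 Q_0$, split via the triangle inequality, control the $P$ and $Q$ oscillations by Poincar\'e and Lemma~\ref{lem 2ndrho.Carl}, and handle the $A(\rho)$ term by showing $\rho(W(x,r))\subset W_\Omega(\hat x,Cr)$ and changing variables. The only cosmetic difference is that the paper builds $P_0,Q_0$ from the averages of $\nabla_x F$ and $\partial_t F$ plugged into the explicit formulas for $P,Q$, whereas you take $P_0,Q_0$ to be the averages of $P,Q$ themselves; both choices lead to the same Poincar\'e estimate since the entries of $P,Q$ are smooth functions of $\nabla F$ with bounded derivatives.
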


\begin{proof}
For $x\in\rd$ and $r>0$, let us use the notation 
$W(x,r)=\Delta_{\rd}(x,r)\times(5r/6,r)$ for Whitney regions in $\reu$, where $\Delta_{\rd}(x,r)=\set{y\in\rd: \abs{y-x}<r}$. Notice that they are slightly smaller than the Whitney regions defined in \eqref{TT}, 
but as we said earlier, this change is insignificant in terms of Theorems \ref{t1b1} and \ref{t1b2} in $\reu$. 
We denote by $W_\Omega(x_\rho,r)$ the Whitney  
region $\set{Y\in \om\cap B(x_\rho,r):\dist(Y,\Gamma)>3r/10}$ in $\Omega$, where $x_\rho=(x,\vp(x))\in\Gamma$ for $x\in\rd$. Notice that we replace the constant $1/2$ in \eqref{1b2} with $3/10$, but we know that this does not matter. 
 It will be convenient to work with the following different Whitney regions in $\Omega$ : for $x\in\rd$ and $r>0$, set
\[
W_\rho(x,r):=\rho(W(x,r)).
\]
Since $1\le \d_t(c_0 t+F(y,t))\le 1+2c_d\norm{\nabla\vp}_\infty$ for all $(y,t)\in\reu$, 
\begin{equation}\label{eq Lipdist}
    t\le c_0t+F(y,t)-\vp(x)\le (1+2c_d\eps)t \quad \text{for }(y,t)\in\reu.
\end{equation}
Let $(y,s) = W_\rho(x,r)$ be given, and write $(y,s) = \rho(y,t) = (y, c_0 t + F(y,t))$ for some 
$(y,t) \in W(x,r)$. Thus $|y-x| < r$ and $5r/6 < t < r$. By \eqref{eq Lipdist}, $5r/6<s-\vp(y)\le (1+2c_d\eps)r$. This means that $\abs{(y,s)-y_\rho}>5r/6$, and hence $\dist((y,s),\Gamma)\ge 3r/4$ if we choose $\eps$ sufficiently small.
In addition, \[|(y,s)-x_\rho| \le |y-x|+|s-\varphi(y)|+\abs{\vp(y)-\vp(x)}\le \br{2+(2c_d+1)\eps}r
\leq 5r/2,\]
if we choose $(2c_d+1)\eps<1/2$. So 
\begin{equation} \label{eq Wrho_ss}
 W_\rho(x,r)\subset \set{Y\in\Omega\cap B(x_\rho,5r/2): \dist(Y,\Gamma)>3r/4)}
 = W_\Omega(x_\rho,5r/2).
\end{equation}

Now we fix $x\in\rd$ and $r>0$, and estimate $\alpha_{\wt A}(x,r)$. 
Let $A_0$ be a constant coefficient matrix which achieves the infimum for $\alpha_A(x_\rho,\frac{3r}{2})$,
 set 
 $$a = a(x,r) = \fiint_{W(x,r)}\nabla_y F(y,s)dyds$$ and 
$$b = b(x,r) = \fiint_{W(x,r)}\d_sF(y,s)dyds,$$ and define 
$\wt A_0$ to be the constant coefficient matrix
\begin{equation*}
    \wt A_0:=
    \begin{pmatrix}
 I & \mathbf{0}\\
    -a & 1
\end{pmatrix}
    A_0\begin{pmatrix}
 I & -a \br{c_0+b}^{-1}\\
    0 & \br{c_0+b}^{-1}
\end{pmatrix}
    =:P_0 A_0 Q_0, 
\end{equation*}
which should be compared to 
the expression of $\wt A(x,t)$ in
 \eqref{eq wtA}. Then 
\begin{multline*}
    \alpha_{\wt A}(x,r)^2\le\fiint_{W(x,r)}\abs{\wt A(y,s)-\wt A_0}^2dyds\\
    =\fiint_{W(x,r)}\abs{P(y,s)A(\rho(y,s))Q(y,s)-P_0 A_0 Q_0}^2dyds.
\end{multline*}
By the triangle inequality, 
\begin{multline*}
    \abs{P(y,s)A(\rho(y,s))Q(y,s)-P_0 A_0 Q_0}
    \le     \abs{(P(y,s)-P_0)A(\rho(y,s))Q(y,s)}\\
    +\abs{P_0A(\rho(y,s))(Q(y,s)-Q_0)} + \abs{P_0\br{A(\rho(y,s))-A_0}Q_0}.
\end{multline*}
Using 
the definitions of $P$, $Q$, $P_0$, $Q_0$, the ellipticity of $A$ and $A_0$, and 
\eqref{eq detdrho} \eqref{eq gradxF}, we obtain that
\begin{multline*}
    \abs{P(y,s)A(\rho(y,s))Q(y,s)-P_0 A_0 Q_0}
    \lesssim \big|\nabla_yF(y,s)- a 
    \big|\\+\big|\d_s F(y,s)- b 
    \big|+ 
    \abs{A(\rho(y,s))-A_0}.
\end{multline*}
Then
\begin{multline}\label{eq alphawtA}
    \alpha_{\wt A}(x,r)^2\\
    \lesssim \fiint_{W(x,r)}\bigg|\nabla F(y,s)-\fiint_{W(x,r)}\nabla F(z,\tau)dzd\tau\bigg|^2dyds + \fiint_{W(x,r)}\abs{A(\rho(y,s))-A_0}^2dyds\\
    \lesssim r^2\fiint_{W(x,r)}\abs{\nabla^2F(y,s)}^2dyds + \fiint_{\rho(W(x,r))}\abs{A(Y)-A_0}^2dY.
\end{multline}
We fix any $x_0\in\rd$ and $r_0>0$, and estimate 
$\iint_{T(x_0,r_0)}\alpha_{\wt A}(x,r)^2dxdr/r$. 
By Fubini's theorem 
(recall also that $r \sim s$ when $(y,s) \in W(x,r)$), 
\[
\iint_{T(x_0,r_0)}r^2\fint_{W(x,r)}\abs{\nabla^2F(y,s)}^2dyds\frac{dxdr}{r}
\le C\int_{\Delta_{\rd}(x_0,2r_0)}\int_0^{r_0}\abs{\nabla^2F(y,s)}^2s\,dyds,
\]
which is bounded by $C\norm{\nabla\vp}_\infty^2r_0^d$ thanks to Lemma \ref{lem 2ndrho.Carl}. 
By \eqref{eq Wrho_ss} and our choice of $A_0$, 
\begin{multline*} 
    \iint_{(x,r) \in T(x_0,r_0)}\fiint_{Y \in \rho(W(x,r))}\abs{A(Y)-A_0}^2dY  \frac{dxdr}{r}\\
    \le C\int_{x\in\Delta_{\rd}(x_0,r_0)}\int_{r=0}^{r_0}
   \fint_{Y \in  W_{\Omega}(x_\rho,5r/2)}
      \abs{A(Y)-A_0}^2dY\frac{dxdr}{r}\\ 
    \le C\int_{x\in\Delta_{\rd}(x_0,r_0)}\int_{r=0}^{r_0}
  \alpha_A(x_\rho,5r/2)^2\frac{dxdr}{r},
\end{multline*}
where in fact $\alpha_A(x,r)$ is defined as in \eqref{1a5}, but in terms of the larger 
Whitney boxes $W_{\Omega}(x,r)$; it is easy to see this does not alter the weak DKP condition  \eqref{1a7}.
Now observe that if $x \in \Delta_{\rd}(x_0,r_0)$, then
\[
x_\rho\in \Delta((x_0)_\rho,(1+2\epsilon)r_0)\subset 
\Delta((x_0)_\rho,3r_0/2)
\]
if $\varepsilon$ is small enough, and now, setting $\xi = x_\rho$, 
\begin{multline*}
\int_{x\in\Delta_{\rd}(x_0,r_0)}\int_{r=0}^{r_0}  \alpha_A(x_\rho,5r/2)^2\frac{dxdr}{r}
\\
\leq \int_{\xi\in \Delta((x_0)_\rho,3r_0/2)}\int_{r=0}^{r_0}  \alpha_A(\xi,5r/2)^2\frac{d\sigma(\xi)dr}{r}
\le C\cN(A) r_0^d
\end{multline*}
by the weak DKP condition on the $\alpha_A(\xi,r)$.
Recalling \eqref{eq alphawtA}, we have proved that for any $x_0\in\rd$ and $r_0>0$,
\[
\iint_{T(x_0,r_0)}\alpha_{\wt A}(x,r)^2\frac{dxdr}{r}\le C\br{\norm{\nabla\vp}_\infty^2+\cN(A)}r_0^d,
\]
which gives the desired the estimate for $\cN(\wt A)$.
\end{proof}

Let $\omega^{X_0}$ (or $\omega^\infty$) denote the elliptic measure with pole at $X_0\in\Omega$ (or at infinity) corresponding to the operator $L=-\divg{A\nabla}$ in $\Omega$. Let $\wt{\omega}^{X_0}$ (or $\wt\omega^\infty$) denote the elliptic measure with pole at $X_0\in\reu$ (or at infinity) corresponding to the operator $\wt L=-\divg{\wt A\nabla}$ in $\reu$. A change of variables shows that for any set $E\subset\Gamma$, and any $X_0\in\Omega$,
\begin{equation}\label{eq emcov.X0}
    \omega^{X_0}(E)=\wt\omega^{\rho^{-1}(X_0)}(\rho^{-1}(E)).
\end{equation}
Similarly, one can show that for any $X$, $Y\in\Omega$,
\(\wt G(\rho^{-1}(X),\rho^{-1}(Y))=G(X,Y)\), where $\wt G$ is the Green function for $\wt L$ in $\reu$, 
and $G$ is the Green function for $L$ in $\Omega$. 
This implies, by the definition of the elliptic measure with a 
pole at infinity in Lemma \ref{lem emGreen_infinity}, that for any $E\subset\Gamma$,
\begin{equation}\label{eq emcov.infty}
   \omega^\infty(E)=\wt\omega^\infty(\rho^{-1}(E)).
\end{equation}

Now we are ready to prove Theorems \ref{t1b1} and \ref{t1b2} for $\Gamma$ defined in \eqref{5b2} with small Lipschitz constant. 

\begin{proof}[Proof of Theorem \ref{t1b1} for small Lipschitz graphs] 
Let $\Delta=B\cap\Gamma$ be  
any surface ball on $\Gamma=\set{(x,\vp(x)):\, x\in\rd}$ and let $E\subset \Delta$ be any Borel set. 
We write $B = B(x_\rho,r)$, with $x \in \R^d$ and $x_\rho = (x, \varphi(x))$, and
set $\wt \Delta = B(x, r) \cap \R^d$. Also call $\pi: \Real^{d+1}\to\bdy\reu\cong\rd$ 
the projection onto $\rd$. Observe also that $\pi(\Delta) \subset \wt \Delta$ and, 
due to the definition of $\rho$ and the fact that $\Gamma$ is a graph, 
\begin{equation} \label{4b16}
\rho^{-1}(E)=\pi(E)
\ \text{ and } \ \rho^{-1}(\Delta)=\pi(\Delta).
\end{equation}
Next we compare the Hausdorff measures. Since $\Gamma$ is an $\varepsilon$-Lipschitz graph,
\begin{equation} \label{4b17}
\sigma(\pi(E)) \leq \sigma(E) \leq (1+C\varepsilon) \sigma(\pi(E)),
\end{equation}
and similarly 
\begin{equation} \label{4b17bis}
\sigma(\pi(\Delta)) \leq \sigma(\Delta) \leq (1+C\varepsilon) \sigma(\pi(\Delta)).
\end{equation}
In addition, $\pi(\Delta)$ contains $\R^d \cap B(x, (1-\varepsilon) r)$, so
\begin{equation} \label{4b18}
\sigma(\pi(\Delta)) \geq (1-C\varepsilon) \sigma(\wt \Delta),
\end{equation}
and hence
\begin{equation} \label{4b21}
1-C\varepsilon \leq \frac{\sigma(\pi(\Delta))}{\sigma(\wt\Delta)} \leq 1
\end{equation}
because $\pi(\Delta) \subset \wt\Delta$.

Lemma \ref{lem cNwtA} and Theorem \ref{t1b1} for $\reu$ say that
if $\varepsilon$ and $\cN(A)$ are small enough, 
\begin{equation} \label{4b19}
\abs{\frac{\wt\omega^\infty(\pi(E))}{\wt\omega^\infty(\wt \Delta)}
-   \frac{\sigma(\pi(E))}{\sigma(\wt\Delta)}
}<\frac{\delta}{10}
  \  \text{ and } 
  \abs{\frac{\wt\omega^\infty(\pi(\Delta))}{\wt\omega^\infty(\wt \Delta)}
-   \frac{\sigma(\pi(\Delta))}{\sigma(\wt\Delta)}
}<\frac{\delta}{10}.
\end{equation} 
Because of \eqref{eq emcov.infty} and \eqref{4b16}, this becomes
\begin{equation} \label{4b20}
\abs{\frac{\omega^\infty(E)}{\wt\omega^\infty(\wt \Delta)}
-   \frac{\sigma(\pi(E))}{\sigma(\wt\Delta)}
}<\frac{\delta}{10}
  \  \text{ and } 
  \abs{\frac{\omega^\infty(\Delta)}{\wt\omega^\infty(\wt \Delta)}
-   \frac{\sigma(\pi(\Delta))}{\sigma(\wt\Delta)}
}<\frac{\delta}{10},
\end{equation}
and by \eqref{4b21} the second part yields
\begin{equation} \label{4b22}
1-C\varepsilon-\frac{\delta}{10} 
\leq \frac{\omega^\infty(\Delta)}{\wt\omega^\infty(\wt \Delta)} 
\leq 1 + \frac{\delta}{10}.
\end{equation}
Since by \eqref{4b17} $|\sigma(\pi(E))-\sigma(E)| \leq C \varepsilon \sigma(\pi(E)) 
\leq C \varepsilon \sigma(\wt \Delta)$, we deduce from the first part of \eqref{4b20} that
\begin{equation} \label{4b23}
\abs{\frac{\omega^\infty(E)}{\wt\omega^\infty(\wt \Delta)}
-   \frac{\sigma(E))}{\sigma(\wt\Delta)}
}<\frac{\delta}{10}+C\eps.
\end{equation}
Finally, we may as well assume that we took $\delta < 10^{-1}$, and then we deduce from
\eqref{4b22} and \eqref{4b23} that 
\begin{equation} \label{4b24}
\abs{\frac{\omega^\infty(E)}{\omega^\infty(\Delta)}
-   \frac{\sigma(E))}{\sigma(\wt\Delta)}
}<\frac{\delta}{2}, 
\end{equation}
and since 
\begin{equation} \label{4b25}
\abs{\frac{\sigma(E))}{\sigma(\wt\Delta)} - \frac{\sigma(E))}{\sigma(\Delta)}}
= \frac{\sigma(E))}{\sigma(\Delta)} \, \frac{\sigma(\wt\Delta) - \sigma(\Delta)}{\sigma(\wt\Delta)} 
\leq C\varepsilon
\end{equation}
by \eqref{4b21} and \eqref{4b17bis}, we get that
\begin{equation} \label{4b26}
\abs{\frac{\omega^\infty(E)}{\omega^\infty(\Delta)}
-   \frac{\sigma(E))}{\sigma(\wt\Delta)} }<\delta ,
\end{equation}
as needed. 
\end{proof}

\begin{proof}[Proof of Theorem \ref{t1b2} for small Lipschitz graphs] 
It follows from Theorem \ref{t1b1} for small Lipschitz graph and Lemma \ref{lem polediff}. 
Or we can work directly and apply \eqref{eq emcov.X0} and Theorem \ref{t1b2} for $\reu$.
\end{proof}

\section{Chord-arc surfaces with small constants}
\label{Scassc}

\subsection{Geometric properties of the $CASSC$}

In Definition \ref{d1b12}, we 
defined the $CASSC$ by the fact that they have very big pieces of $\varepsilon$-Lipschitz
graphs. This is convenient for us (and also extends well to higher co-dimensions), 
but we could have used different definitions, 
a popular one being that the unit
normal to $\Gamma$ (say, with values in $\mathbb S / \{ \pm 1 \}$) has a small
BMO norm. We refer to the early papers of S. Semmes \cite{semmes1989criterion,semmes1990differentiable,semmes1990analysis} for details.

Let us first give some geometric definitions.
\begin{defn}[Two-sided Corkscrew condition \cite{jerison1982boundary}]\label{tscs.def}
We say a domain $\Omega \subset \ree$ satisfies the two-sided corkscrew condition if there exists a uniform constant $M \ge 2$ such that for all $x \in \bdy\Omega$ and $r \in (0, \diam \bdy\Omega)$ there exists $X_1,X_2 \in \ree$ such that
\[B(X_1, r/M) \subset B(x,r) \cap \Omega, \quad B(X_2, r/M) \subset B(x,r) \setminus \overline{\Omega}.\]
We write $X_{x,r} := X_1$, for the interior corkscrew point for $x$ at scale $r$.  We also use the notation  $X_\Delta:=X_{x,r}$ when $\Delta=\Delta(x,r)=B(x,r)\cap\bdy\Omega$.
\end{defn}

\begin{defn}[Harnack chain condition \cite{jerison1982boundary} ]
We say a domain $\Omega \subset \ree$ satisfies the Harnack chain condition if there exists a uniform constant $M \ge 2$ such that if $X_1, X_2 \in \Omega$ with $\dist(X_i, \bdy\Omega) > \epsilon > 0$ and $|X_1 - X_2|< 2^k \epsilon$ then there exists a `chain' of open balls $B_1, \dots, B_N$ with $N < M k$ such that $X_1 \in B_1$, $X_2 \in B_N$, $B_j \cap B_{j+1} \neq \emptyset$ for $j = 1, \dots, N-1$ and $M^{-1} \diam B_j \le \dist(B_j, \bdy\Omega) \le M \diam B_j$ for $j = 1, \dots, N$.
\end{defn}

\begin{defn}[NTA domains \cite{jerison1982boundary}]
We say a domain $\Omega \subset \ree$ is an NTA domain if it satisfies the two-sided corkscrew condition and the 
Harnack chain condition. 
\end{defn}

Let $\Gamma \in CASSC(\varepsilon)$ be given. We systematically assume that $\varepsilon$
is small enough, depending on $d$ when needed. We start with simple geometric consequences of the definition the $CASSC$.

\begin{lemma} \label{l5a1}
The set $\Gamma$ is Ahlfors regular and Reifenberg flat. More precisely, there is a constant 
$C \geq 1$, that only depends on $d$, such that for $x\in \Gamma$ and $r > 0$,
\begin{equation} \label{5b2bis}
(1-C\varepsilon^{1/d}) c_d r^d \leq \H^d(\Gamma \cap B(x,r)) 
\leq (1+C\varepsilon) c_d r^d
\end{equation}
and there is a hyperplane $P$ through $x$ such that 
\begin{equation} \label{5b3}
\dist(y,P) \leq C \varepsilon^{1/d} r \text{ for } y \in \Gamma \cap B(x,r)
\text{ and } 
\dist(y,\Gamma) \leq C \varepsilon^{1/d} r \text{ for } y \in P \cap B(x,r).
\end{equation}
\end{lemma}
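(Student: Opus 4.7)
The plan is to prove the three assertions in the order: upper bound in \eqref{5b2bis}, then a crude lower bound on $\H^d(\Gamma \cap B(x,r))$, then an upgraded ``closeness'' estimate $\dist(y,G_{x,r}) \le C\varepsilon^{1/d} r$ for every $y \in \Gamma \cap B(x,r/2)$, which will in turn yield both the refined lower bound in \eqref{5b2bis} and the Reifenberg flatness \eqref{5b3}. The exponent $\varepsilon^{1/d}$ in both inequalities tells us where the main difficulty lies: the definition only controls $\Gamma$ and $G_{x,r}$ up to an $\H^d$-error of $\varepsilon r^d$, so to convert a volume defect into a distance one has to take a $d$-th root. The main obstacle is therefore the bootstrap that passes from a crude $\H^d$-lower bound to the pointwise closeness of $\Gamma$ to $G_{x,r}$.

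For the upper bound, I would split $\Gamma \cap B(x,r) = (\Gamma \cap G_{x,r} \cap B(x,r)) \cup (\Gamma \cap B(x,r) \sm G_{x,r})$. The second piece has measure $\le \varepsilon r^d$ by \eqref{1b12}; the first is controlled by $\H^d(G_{x,r} \cap B(x,r)) \le (1+C\varepsilon^2) c_d r^d$, using that the projection of $G_{x,r}$ to $P_{x,r}$ is $1$-Lipschitz, its inverse is $\sqrt{1+\varepsilon^2}$-Lipschitz, and the projection of $B(x,r)$ onto $P_{x,r}$ is a $d$-ball of radius $r$. For the preliminary lower bound I would use that $G_{x,r}$ meets $B(x,r/2)$, pick $z \in G_{x,r} \cap B(x,r/2)$, and observe that the portion of $G_{x,r}$ above $B_{P_{x,r}}(z_h, r/3)$ lies in $B(x,r)$; this gives $\H^d(G_{x,r} \cap B(x,r)) \ge c_d (r/3)^d$, hence $\H^d(\Gamma \cap B(x,r)) \ge c_0(d)\, r^d$ for all $x \in \Gamma$ and $r>0$ once $\varepsilon$ is small.

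The central step is the closeness estimate. For any $y \in \Gamma \cap B(x,r/2)$ and $s := C\varepsilon^{1/d} r$ with $B(y,s)\subset B(x,r)$, apply the weak lower bound at scale $s$ to get $\H^d(\Gamma \cap B(y,s)) \ge c_0 s^d = c_0 C^d \varepsilon r^d$. On the other hand, $\H^d(\Gamma \cap B(y,s) \sm G_{x,r}) \le \H^d(\Gamma \cap B(x,r) \sm G_{x,r}) \le \varepsilon r^d$. Choosing $C = C(d)$ with $c_0 C^d > 1$ forces $\Gamma \cap G_{x,r} \cap B(y,s)\ne \emptyset$, hence $\dist(y,G_{x,r}) \le C\varepsilon^{1/d} r$. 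Applying this with $y=x$ produces $\tilde x \in G_{x,r}$ with $|\tilde x - x|\le C\varepsilon^{1/d} r$, and then the graph above $B_{P_{x,r}}(\tilde x_h,(1-C\varepsilon^{1/d})r)$ lies entirely in $B(x,r)$ (since $A$ is $\varepsilon$-Lipschitz and $\varepsilon \ll \varepsilon^{1/d}$), which yields $\H^d(G_{x,r} \cap B(x,r)) \ge (1-C\varepsilon^{1/d}) c_d r^d$. Subtracting the $\varepsilon r^d$ mismatch gives the lower bound of \eqref{5b2bis}.

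For Reifenberg flatness \eqref{5b3}, I would apply the definition at the slightly larger scale $2r$ (so that the closeness argument applies uniformly to all $y \in \Gamma \cap B(x,r)$) and let $P$ be the affine hyperplane through $x$ parallel to $P_{x,2r}$. For the first inclusion, write any $y \in \Gamma \cap B(x,r)$ as close to some $\tilde y \in G_{x,2r}$, with $|\tilde y - y|\le C\varepsilon^{1/d} r$; anchoring at $\tilde x$, the graph fluctuation gives $\dist(\tilde y,P) \le \varepsilon \cdot 3r$, and hence $\dist(y,P) \le C\varepsilon^{1/d} r$. For the second inclusion, take $y \in P \cap B(x,r)$, project orthogonally to $P_{x,2r}$ to land on a point $\tilde y\in G_{x,2r}$ within $O(\varepsilon r)$ of $y$; running the same volume argument as above at scale $s = C\varepsilon^{1/d} r$ around $\tilde y$ shows $G_{x,2r} \cap \Gamma \cap B(\tilde y, s)\neq \emptyset$, producing a point of $\Gamma$ within $C\varepsilon^{1/d} r$ of $y$. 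The whole argument requires only $\varepsilon$ small depending on $d$, consistent with the statement.
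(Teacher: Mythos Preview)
Your proof is correct and follows essentially the same route as the paper's: upper bound from the graph measure plus \eqref{1b12}, a crude Ahlfors lower bound, the volume--to--distance bootstrap giving $\dist(y,G_{x,r})\le C\varepsilon^{1/d}r$, and then Reifenberg flatness via the hyperplane through $x$ parallel to the graph direction. One small slip: in the second inclusion of \eqref{5b3}, the point $\tilde y\in G_{x,2r}$ you construct is within $O(\varepsilon^{1/d} r)$ of $y$, not $O(\varepsilon r)$ (the offset $|x-\tilde x|\le C\varepsilon^{1/d}r$ enters), but this is harmless for the final bound.
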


\begin{proof}
We first prove the upper bound in \eqref{5b2bis}. Let $x\in \Gamma$ and $r > 0$ be given,
and apply the definition to find an $\varepsilon$-Lipschitz graph $G = G_{x,r}$ that meets 
$B(x,r/2)$ and such that \eqref{1b12} holds. Notice that 
$\H^d(G \cap B(x,r)) \leq (1+C\varepsilon) c_d r^d$, and then the right-hand side of 
\eqref{5b2bis}, with a slightly larger $C$, follows. Since $G$ meets $B(x,r/2)$ at
some point $y$, we also have that $\H^d(G \cap B(x,r)) \geq 
\H^d(G \cap B(y,r/2))\ge (1-C\varepsilon) c_d (r/2)^d$ and then, if $\varepsilon$ is small enough,
\begin{equation} \label{5b4}
\H^d(\Gamma \cap B(x,r)) \geq \H^d(G \cap B(x,r)) - \varepsilon r^d \geq c_d (r/3)^d.
\end{equation}
This already gives the Ahlfors regularity, but we want to improve the lower bound
and prove the Reifenberg-flatness. Set $\delta = \dist(x,G)$; then 
$\Gamma \cap B(x,\delta) \subset \Gamma \sm G$, so by \eqref{1b12}
$\H^d(\Gamma \cap B(x,\delta)) \leq \varepsilon r^d$, and \eqref{5b4} (for $(x,\delta)$)
implies that $\delta \leq C \varepsilon^{1/d} r$. We may now revise our proof of 
\eqref{5b4}, because if we pick $y \in G \cap \overline B(x,\delta)$, then
$\H^d(G \cap B(x,r)) \geq \H^d(G \cap B(y, (1-C\varepsilon^{1/d}) r))
\geq (1-C\varepsilon) (1-C\varepsilon^{1/d}) c_d r^d$, and the proof of 
\eqref{5b4} yields the lower bound in \eqref{5b2bis}.

Incidentally, we added the constraint that $G = G_{x,r}$ that meets $B(x,r/2)$
in Definition \ref{d1b12} to avoid the case of a very small $\Gamma \cap B(x,r)$
and a $G_{x,r}$ that does not meet $B(x,r)$. The $\varepsilon^{1/d}$ is a little
ugly, but requiring $G$ to go through $x$ sounded a little too much. And  also we could
use the fact that we deal with $\varepsilon$-Lipschitz graphs to prove \eqref{5b2bis} 
and \eqref{5b3} with the better constants $C\varepsilon$, but we won't care.

Return to $G$. Since both $G$ and $\Gamma$ are Ahlfors-regular, 
\eqref{1b12} now implies that
\begin{multline} \label{5b5}
\dist(y,G) \leq C \varepsilon^{1/d} r \text{ for } y \in \Gamma \cap B(x,r/2)\\
\text{ and } 
\dist(y,\Gamma) \leq C \varepsilon^{1/d} r \text{ for } y \in G \cap B(x,r/2)
\end{multline}
(otherwise we can find a ball of size $C \varepsilon^{1/d} r$ centered on  $\Gamma$
(respectively $G$) that is contained in $B(x,r) \cap \Gamma \sm G$ 
(respectively $B(x,r) \cap G \sm \Gamma$). But we can also find a hyperplane $P$ 
that contains $x$ and which is $C\varepsilon^{1/d} r$-close to $G$ in $B(x,r)$, and then 
\begin{multline} \label{5b6}
\dist(y,P) \leq C \varepsilon^{1/d} r \text{ for } y \in \Gamma \cap B(x,r/3)\\
\text{ and } 
\dist(y,\Gamma) \leq C \varepsilon^{1/d} r \text{ for } y \in P \cap B(x,r/3).
\end{multline}
This is the same as \eqref{5b3}, but for $B(x,r/3)$; the lemma follows.
\end{proof}

\ms
Recall that since $\Gamma$ is Reifenberg flat, it separates
exactly two domains (there is even a 
bi-H\"older mapping of $\R^{d+1}$ that maps $\R^d$ to $\Gamma$), and it is easy to check that
both domains are NTA. Thus 
Lemma \ref{l5a1} allows us to apply the usual estimates for elliptic operators.
We let $\Omega$ be one of these domains.

Similarly, the Ahlfors regularity of $\Gamma$ allow us to define $A_\infty$
(with respect to $\sigma = \H^d_{\vert \Gamma}$, which is doubling)
and $A_\infty(\sigma,\delta)$ (as in Definition \ref{d1b14}).

Finally, we can even construct an analogue of dyadic cubes on $(\Gamma, d\sigma)$,
which is pleasant because it simplifies the theory of $A_\infty$ weights,
and it implies that the John-Nirenberg inequality on the exponential integrability of 
BMO functions holds on $(\Gamma, d\sigma)$ essentially as on $\R^d$.

\ms
We now approximate $\Omega$ by small Lipschitz domains that are contained in $\Omega$.
\begin{lemma} \label{l5a7}
Suppose as above that $\Gamma \in CASSC(\varepsilon)$, with $\varepsilon$ small enough (depending on $d$). 
Set $\eta = \varepsilon ^{\frac{1}{2d}}$. 
Then for each $x\in \Gamma$ and $r > 0$, we can find an $\eta$-Lipschitz graph $G$ such that for one 
of the connected components $U$ of $\R^{d+1} \sm G$, we have
\begin{equation} \label{5b8}
U \cap B(x,r) \subset \Omega 
\end{equation} 
and 
\begin{equation} \label{5b9}
\H^d(\Gamma \cap B(x,r)\sm G) + \H^d(G \cap B(x,r)\sm \Gamma) 
\leq C \varepsilon^{1/2} r^d. 
\end{equation}
As usual, $C$ depends only on $d$.
\end{lemma}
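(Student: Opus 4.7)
The plan is to construct the graph $G$ as an $\eta$-Lipschitz upper envelope of $\Gamma$ built on top of the CASSC approximation $G_{x,3r}$. First I would apply Definition~\ref{d1b12} at $x$ and scale $R=3r$ to produce an $\varepsilon$-Lipschitz graph $G_0=G_{x,R}$ meeting $B(x,3r/2)$ with $\H^d((\Gamma\triangle G_0)\cap B(x,R))\le\varepsilon R^d$. Lemma~\ref{l5a1} then furnishes a hyperplane $P$ through $x$ and a unit normal $\nu$, which I orient to point into $\Omega$, so that both $\Gamma\cap B(x,2r)$ and $G_0\cap B(x,2r)$ sit in the slab $\{|(z-x)\cdot\nu|\le C\varepsilon^{1/d}r\}$. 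Writing $\pi$ for the orthogonal projection onto $P$, I parametrize $G_0$ as the graph $\{y+A(y)\nu:y\in P\}$ of an $\varepsilon$-Lipschitz function $A$ with $|A|\le C\varepsilon^{1/d}r$ on $\pi(B(x,2r))$.

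Next, I define the upper profile
\[
H(y):=\sup\{s\in\R:y+s\nu\in\Gamma\cap B(x,2r)\},\quad \sup\emptyset:=-\infty,
\]
and its $\eta$-Lipschitz majorant
\[
\tilde A(y):=\sup_{y'\in\pi(B(x,2r))}\bigl(H(y')-\eta|y-y'|\bigr),\quad \eta:=\varepsilon^{1/(2d)}.
\]
I take $G$ to be the graph of $\tilde A$ extended to a global $\eta$-Lipschitz graph by McShane--Whitney, and let $U$ be the connected component of $\R^{d+1}\setminus G$ on the $+\nu$-side. The inclusion $U\cap B(x,r)\subset\Omega$ will follow from a vertical-ray argument: any $(y,t)\in U\cap B(x,r)$ has $t>\tilde A(y)\ge H(y)$, so the upward segment from $(y,t)$ inside $B(x,r)$ avoids $\Gamma\cap B(x,2r)$, and by the Reifenberg flatness from Lemma~\ref{l5a1} and the two-sided corkscrew condition it connects $(y,t)$ to the $\Omega$-side of $B(x,r)$.

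The main obstacle is the measure estimate \eqref{5b9}. Setting $E:=\{y\in\pi(B(x,2r)):H(y)\ne A(y)\}$, the Lipschitz character of $\pi$ restricted to $\Gamma$ and $G_0$, together with the symmetric-difference bound on $G_0$, yields $\mathcal L^d(E)\le C\varepsilon r^d$. A direct case analysis --- using that $A$ is $\varepsilon$-Lipschitz on $E^c$, that $|A|,|H|\le C\varepsilon^{1/d}r$, and that $\eta=\varepsilon^{1/(2d)}\gg\varepsilon^{1/d}$ for small $\varepsilon$ --- shows $\tilde A(y)=A(y)=H(y)$ whenever $\dist(y,E)>\delta:=C\varepsilon^{1/(2d)}r$. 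Hence $G$ coincides with $G_0$ off $\pi^{-1}(E_\delta)$ and
\[
(G\triangle\Gamma)\cap B(x,r)\subset (G_0\triangle\Gamma)\cap B(x,R)\cup\pi^{-1}(E_\delta),
\]
reducing the task to showing $\mathcal L^d(E_\delta)\le C\varepsilon^{1/2}r^d$. The naive Minkowski-neighbourhood bound $\mathcal L^d(E_\delta)\le\mathcal L^d(E)+\delta\cdot(\text{perimeter})$ is insufficient for a general set of small measure. The hard part will be to exploit the structure of $E$ as the projection of a $d$-dimensional piece of the Ahlfors-regular sets $\Gamma$ and $G_0$: via a Vitali covering combined with a density estimate for $G_0\triangle\Gamma$ at scale $\delta$ (which can be obtained by iterating the small-constant CASSC hypothesis dyadically down to scale $\delta$), one produces $N\lesssim\varepsilon r^d/\delta^d$ covering balls whose projections cover $E_\delta$, yielding $\mathcal L^d(E_\delta)\lesssim N\delta^d\lesssim \varepsilon r^d$, which comfortably beats $\varepsilon^{1/2}r^d$. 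Alternatively, one may sidestep the entire envelope construction by invoking Semmes's theorem \cite{semmes1989criterion,semmes1990differentiable,semmes1990analysis} that a CASSC with sufficiently small constant is locally an $O(\varepsilon^{1/(2d)})$-Lipschitz graph, in which case $G$ can be taken as the McShane--Whitney extension of that local graph function and \eqref{5b9} becomes immediate.
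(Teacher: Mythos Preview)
Your envelope construction is close in spirit to the paper's---both build an $\eta$-Lipschitz majorant over the CASSC graph $G_0$---and your inclusion argument for \eqref{5b8} is essentially fine. The genuine gap is in the measure estimate \eqref{5b9}. You correctly identify that the naive Minkowski bound on $\mathcal L^d(E_\delta)$ fails, but your proposed fix (a ``density estimate for $G_0\triangle\Gamma$ at scale $\delta$'' obtained by iterating CASSC) is not justified and does not obviously work: nothing prevents $\Gamma\triangle G_0$ from being a sparse set whose $\delta$-neighbourhood has measure comparable to $r^d$, and the CASSC hypothesis at smaller scales produces \emph{different} approximating graphs, not information about the fixed $G_0$.

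The paper's key idea, which replaces your uniform scale $\delta=C\varepsilon^{1/(2d)}r$, is a \emph{variable-scale} Vitali covering of $\mathcal Z:=\Gamma\cap\overline B(x,r)\setminus G_0$ by balls $B_Z=B(Z,r(Z))$ with $r(Z)=\dist(Z,G_0)/100$. Two facts then fall out almost for free. First, since each $B_Z$ is disjoint from $G_0$, Ahlfors regularity of $\Gamma$ gives $\sum_{Z\in I} r(Z)^d\le C\,\H^d(\Gamma\cap B(x,2r)\setminus G_0)\le C\varepsilon r^d$ for the disjoint Vitali subfamily $I$. Second---and this is the point---if $A(y)>A_0(y)$ then some $Z'=(z',t')\in\mathcal Z$ satisfies $t'-\eta|y-z'|>A_0(y)$, hence $|y-z'|\lesssim\eta^{-1}(t'-A_0(z'))\lesssim\eta^{-1}\dist(Z',G_0)$; passing to the Vitali ball $B_Z\ni Z'$ one gets $|y-z|\lesssim\eta^{-1}r(Z)$. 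Thus $H=\{A\neq A_0\}$ is covered by balls of radius $\sim\eta^{-1}r(Z)$, and $\H^d(H)\le C\eta^{-d}\sum r(Z)^d\le C\eta^{-d}\varepsilon r^d=C\varepsilon^{1/2}r^d$. In short: the correct covering scale at $Z$ is $\dist(Z,G_0)$, not the worst-case bound $C\varepsilon^{1/d}r$, and this local refinement is exactly what makes the measure estimate close. Your Semmes black-box alternative would indeed work, but the direct argument is short and avoids importing heavy machinery.
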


\begin{proof}
Again we make no attempt to get the best constants here.
Let $x\in \Gamma$ and $r > 0$ be given, and let $G_0 = G_{x,2r}$ be the
$\varepsilon$-Lipschitz graph given by Definition~\ref{d1b12}. 
After a rotation if needed, we can assume that $G_0$ is the graph of $A_0 : \R^d \to \R$
(even if $A$ was not initially defined on the whole $\R^d$, we could extend it).

Write $x =(x_0,t_0)$. Notice that $\dist(x,G_0) \leq C \varepsilon^{1/d} r$ 
by \eqref{5b5}, and since $G_0$ is almost horizontal (and if $\varepsilon$ is small enough), 
this means that 
\begin{equation} \label{5b10}
|A_0(x_0)- t_0| \leq C  \varepsilon^{1/d} r.
\end{equation}
Then consider the points $X_\pm = (x_0, t_0 \pm r/2)$; 
since $X_\pm$ is far from $G_0$, \eqref{5b5} says that $X_\pm$ is also far form $\Gamma$; 
without loss of generality, we can assume that 
$X_+ \in \Omega$, because otherwise $X_- \in \Omega$ (recall that $\Gamma$
is Reifenberg flat, so $\Omega$ lies at least on one side) and we could replace 
$U$ below with the lower component of $\R^{d+1} \sm G_0$. 

Set ${\mathcal Z} = \Gamma \cap \overline B(x, r) \sm G_0$; we want to hide
${\mathcal Z}$ below the new graph $G$. The simplest is to take
\begin{equation} \label{5b11}
A(y) = \max\big(A_0(y), \sup_{(z,t) \in {\mathcal Z}} t - \eta |y-z| \big).
\end{equation}
Obviously $A$ is $\eta$-Lipschitz. 
Call $G$ the graph of $A$, and $U$ the component of 
$\R^{d+1} \sm G$ above $G$. 

Our next step is to prove that $U \cap B(x,r) \subset \Omega$, as in \eqref{5b8}. 
First we claim that $U \cap B(x,r)$ does not meet $\Gamma$. 
Clearly it does not meet $G_0$, because $A \geq A_0$, so 
it is enough to exclude points of $B(x,r) \cap \Gamma \sm G_0 \subset {\mathcal Z}$.
But if $(z,t)$ is such a point, then by \eqref{5b11}, 
$A(z) \geq t - \eta |z-z| = t$, so $(z,t) \notin U$; this proves our claim. 

Let us now check that $X_+ =(x_0, t_0 + r/2) \in U$, or in other words
$A(x_0) < t_0 + r/2$. But
$A_0(x_0) \leq t_0 + C  \varepsilon^{1/d} r < t_0 + r/2$ by \eqref{5b10}, 
and for $(z,t) \in {\mathcal Z}$, we have that $t \leq A(x_0) + C \varepsilon^{1/d} r$ because 
$\dist((z,t),G_0) \leq \varepsilon^{1/d} r$ by \eqref{5b5}, and $G_0$
is almost horizontal. Then $t - \eta |y-z| \leq t \leq A(x_0) 
+ C \varepsilon^{1/d} r < t_0 + r/2$, so $X_+ \in U$.
At this point $U \cap B(x, r)$ is a connected set that contains $X_+ \in \Omega$ and that 
does not meet $\Gamma$, and this implies that $U \cap B(x, r) \subset \Omega$,
as needed for \eqref{5b8}.

Finally we estimate the size of the two bad sets in \eqref{5b9}.
For each $Z = (z,t) \in {\mathcal Z}$, consider the ball $B_{Z} = B(Z, r(Z))$,
with $r(Z) = \dist(Z,G_0)/100$.
By the $5$-covering lemma of Vitali, we can find a countable collection 
of balls $B_{Z}$, $Z \in I$, such that the $B_Z$ are disjoint but the 
$5B_Z$ cover $\mZ$. First consider the set
\begin{equation} \label{5b12}
H = \big\{ y \in \R^d \cap B(x_0,r) \, ; \, A(y) \neq A_0(y) \big\}. 
\end{equation}
We want to cover $H$ by multiples of the $B_Z$, so let $y \in H$ be given.
By definition, $A(y) \neq A_0(y)$ and so we can find $Z'  = (z',t') \in {\mathcal Z}$
such that $A_0(y) < t' - \eta |y-z'|$ and hence, since $A_0$ is $\varepsilon$-Lipschitz,
$A_0(z') < t' - \eta |y-z'| + \varepsilon |y-z'| < t' - \eta |y-z'|/2$.
Then we can find $Z = (z,t) \in I$ such that $(z',t') \in 5 B_Z$.
\begin{equation} \label{5b13}
|y-z| \leq |y-z'| + |z'-z| \leq 2\eta^{-1}(t'-A_0(z')) + 5 r(Z).
\end{equation}
Now let $W=(w,A_0(w)) \in G_0$ minimize the distance to $Z$.
Then $|w-z| \leq |W-Z| = 100 r(Z)$, 
so $|A_0(z)-A_0(w)| \leq \varepsilon |z-w| \leq 100 \varepsilon r(Z)$.
Now 
\begin{eqnarray} \label{5b14}
t'-A_0(z') &\leq& |t'-t| + |A_0(z')- A_0(w)| + |A_0(w) - t| 
\nonumber\\
&\leq&  5 r(Z) + \varepsilon |z'- w| + |W - Z| 
\\
&\leq&  5 r(Z) + \varepsilon (|Z'-Z|+|Z-W|) + 100 r(Z)
\leq 106 r(Z),
\nonumber
\end{eqnarray}
so \eqref{5b13} implies that $|y-z| \leq 150 \eta^{-1} r(Z)$, and altogether
\begin{equation} \label{5b15}
H \subset \bigcup_{Z = (z,t) \in I} B(z, 200 r(Z)).
\end{equation}
This yields
\begin{eqnarray} \label{5b16}
\H^d(H) &\leq& C \eta^{-d} \sum_{Z \in I} r(Z)^d \leq C \eta^{-d}\sum_{Z \in I} 
\H^d(\Gamma \cap B_Z)
\\
&\leq& C \eta^{-d} \H^d(\Gamma \cap B(x,2 r) \sm G_0)
\leq C \eta^{-d} \varepsilon  r^d = C \varepsilon^{1/2} r^d
\nonumber
\end{eqnarray}
because the $B_Z$ are disjoint by construction, contained in $B(x,2 r)\sm G_0$
because $r(Z) = \dist(Z,G_0)/100 << r$, by the Ahlfors regularity of $\Gamma$,
by \eqref{1b12}, and because $\eta = \varepsilon^{1/2d}$.
So we control the measure of the difference between the two graphs $G$ and $G_0$, and at
this point \eqref{5b9} follows from \eqref{1b12}.
\end{proof}

As a corollary of Lemma \ref{l5a7}, the inclusion \eqref{5b8} can be reversed. That is, given any ball $B(x,r)$ centered on $\Gamma$, $\Omega$ can be approximated by a Lipschitz domain that contains $\Omega\cap B(x,r)$. We state this result as follows.
\begin{cor}\label{cor U}
Suppose as above that $\Gamma \in CASSC(\varepsilon)$, with $\varepsilon$ small enough (depending on $d$). Set $\eta = \varepsilon ^{\frac{1}{2d}}$. 
Then for each $x\in \Gamma$ and 
$r > 0$, we can find an $\eta$-Lipschitz graph $G$ such that for one of the connected
components $U$ of $\R^{d+1} \sm G$, we have
\begin{equation} \label{5c1}
\Omega \cap B(x,r) \subset U 
\end{equation} 
and 
\begin{equation} \label{5c2}
\H^d(\Gamma \cap B(x,r)\sm G) + \H^d(G \cap B(x,r)\sm \Gamma) 
\leq C \varepsilon^{1/2} r^d. 
\end{equation}
As usual, $C$ depends only on $d$.
\end{cor}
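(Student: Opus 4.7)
The plan is to mirror the construction from the proof of Lemma \ref{l5a7}, lowering the reference graph $G_0$ instead of raising it. Start from $G_0 = G_{x, 2r}$ as in Definition \ref{d1b12}, written after a rotation as the graph of an $\varepsilon$-Lipschitz function $A_0 : \mathbb{R}^d \to \mathbb{R}$, and assume without loss of generality that $\Omega$ is the upper component. Set $\mathcal{Z} = \Gamma \cap \overline{B(x, r)}$ and define
\[
A(y) = \min\Bigl(A_0(y),\; \inf_{(z, t) \in \mathcal{Z}}\{t + \eta|y - z|\}\Bigr)
\]
with $\eta = \varepsilon^{1/2d}$. Let $G$ be the graph of $A$ and let $U$ be the component of $\mathbb{R}^{d+1} \setminus G$ lying above $G$.

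This $A$ is $\eta$-Lipschitz as a minimum of $\eta$-Lipschitz functions, satisfies $A \leq A_0$ tautologically, and by Lemma \ref{l5a1} (Reifenberg flatness, which gives $t \geq A_0(z) - C\varepsilon^{1/d}r$ for each $(z, t) \in \mathcal{Z}$) combined with the Lipschitz property of $A_0$, also $A \geq A_0 - C\varepsilon^{1/d}r$. In particular $A(x_0) \leq A_0(x_0) < t_0 + r/2$ by the analogue of \eqref{5b10}, so $X_+ = (x_0, t_0 + r/2) \in U$, which pins down which of the two components of $\mathbb{R}^{d+1}\setminus G$ we should call $U$.

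The heart of the proof is the inclusion $\Omega \cap B(x, r) \subset U$, that is, $s > A(y)$ for every $Y = (y, s) \in \Omega \cap B(x, r)$. If $s \geq A_0(y) + C\varepsilon^{1/d}r$, this is immediate from $A \leq A_0$. Otherwise $Y$ sits in the thin slab $|s - A_0(y)| \leq C\varepsilon^{1/d}r$ that also contains $\Gamma \cap B(x, r)$; here one uses that $A$ was built so that $A(z) \leq t$ for every $(z, t) \in \mathcal{Z}$, so on every vertical line inside $B(x, r)$ the graph $G$ sits at or below $\Gamma$. Combined with the small-constant $CASSC$ hypothesis together with Lemma \ref{l5a1}, which force $\Gamma \cap B(x, r)$ to separate $B(x, r)$ into exactly the two open pieces $\Omega \cap B(x, r)$ and $\Omega' \cap B(x, r)$ with $\Omega$ strictly above the topmost $\Gamma$-point on each vertical line meeting $\Gamma$, this gives $s > A(y)$.

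The measure bound \eqref{5c2} follows by mirroring the Vitali covering argument from Lemma \ref{l5a7}: the set $\{y \in \mathbb{R}^d : A(y) \neq A_0(y)\} \cap \Delta_{\mathbb{R}^d}(x_0, r)$ is contained in a $(200/\eta)$-neighborhood of the projection of $\mathcal{Z} \setminus G_0$ onto $\mathbb{R}^d$, whose $\mathcal{H}^d$ measure is controlled by $C\eta^{-d}\varepsilon r^d = C\varepsilon^{1/2}r^d$ via \eqref{1b12}. The main obstacle is the topological separation statement in the slab case, which requires the small-constant $CASSC$ geometry to guarantee that locally $\Gamma$ truly behaves like a single-sheet surface; modulo this (standard in the $CASSC$ theory of Semmes referenced in this section), the argument is a direct mirror of the proof of Lemma \ref{l5a7}.
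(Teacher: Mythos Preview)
Your approach is genuinely different from the paper's, and considerably harder. The paper does not redo the construction at all: it simply applies Lemma~\ref{l5a7} to the \emph{other} component $\Omega_-$ of $\R^{d+1}\sm\Gamma$, obtaining an $\eta$-Lipschitz graph $G$ with components $U_\pm$ such that $U_-\cap B(x,r)\subset\Omega_-$, and then takes complements: $\Omega_+\subset U_+\cup B(x,r)^{\mathsf c}$, hence $\Omega_+\cap B(x,r)\subset U_+$. The measure estimate \eqref{5c2} is literally \eqref{5b9}. That is the entire proof.

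Your direct construction (lowering $G_0$ via a $\min$ instead of raising it via a $\max$) is plausible, and the measure bound does go through exactly as you say, since points $(z,t)\in\Gamma\cap G_0$ satisfy $t+\eta|y-z|\ge A_0(y)$ and hence do not affect the $\min$. But the inclusion argument has a real gap in your ``slab'' case. You assert that on each vertical line meeting $\Gamma\cap B(x,r)$, the set $\Omega$ lies strictly above the topmost $\Gamma$-point, and that this forces $s>A(y)$. Neither of these is immediate from Lemma~\ref{l5a1}: Reifenberg flatness confines $\Gamma\cap B(x,r)$ to a thin slab around $G_0$, but it does not by itself say that each vertical line through a point of $\Omega\cap B(x,r)$ with $s\le A_0(y)$ meets $\Gamma$ at a point $(y,t^\ast)$ with $t^\ast<s$ \emph{and} $(y,t^\ast)\in\overline{B(x,r)}$. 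Near $\partial B(x,r)$ the segment from $(y,s)$ downward may exit the ball before reaching $\Gamma$, so the needed $(y,t^\ast)$ need not lie in your $\mathcal Z$, and the bound $A(y)\le t^\ast$ is unavailable. This can likely be repaired (e.g.\ by running the construction in a slightly larger ball and using continuity of $A$), but it is not ``standard'' in the sense you suggest, and it is exactly the kind of boundary subtlety that the paper's complement argument avoids entirely.
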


\begin{proof}
Let $G$ be the $\eta$-Lipschitz graph as in Lemma \ref{l5a7}. Then we only need to prove \eqref{5c1} since \eqref{5c2} is the same as \eqref{5b9}. Let the two connected components of $\ree\setminus G$ be $U_\pm$, and the two connected components of $\ree\sm\Gamma$ be $\Omega_\pm$. Then by Lemma \ref{l5a7}, $U_-\cap B(x,r)\subset\Omega_-$. Taking complement, we get that 
$\Omega_+\subset U_+\cup\stcomp{ B(x,r)}$. Therefore, $\Omega_+\cap B(x,r)\subset U_+\cap B(x,r)\subset U_+$, as desired.
\end{proof}

\subsection{Small $A_\infty$ result in the case of $CASSC$}\label{SCASSCmain}

In this section we deduce Theorem \ref{thmCAS} from the case of small Lipschitz graphs, 
through a standard comparison argument. 

Throughout the section, we let $\Gamma\in CASSC(\epsilon)$ with $\epsilon$ small enough, and $\Omega$ be one of the components of $\ree\sm\Gamma$. We shall approximate $\Omega$ from outside by a Lipschitz graph as in Corollary \ref{cor U}.

\begin{thm}\label{thmCAS}
For every choice of $\delta > 0$, there exist 
$\eps_0 = \varepsilon_0(\delta, d, \mu_0) > 0$
with the following properties. Let $0<\eps\le\eps_0$. 
Suppose $\Gamma \in CASSC(\varepsilon)$, and $L=-\divg{A\nabla}$ is an elliptic operator with 
ellipticity $\mu_0$ which satisfies the weak DKP condition with norm $\cN(A)<\varepsilon$. 
 Then for every surface ball $\Delta = B(x,r) \cap \Gamma$ and and every Borel subset $E\subset \Delta$,
\[
  \abs{\frac{\omega_\Omega^{X_0}(E)}{\omega_\Omega^{X_0}(\Delta)}-\frac{\sigma(E)}{\sigma(\Delta)}}<\delta,
\]
whenever
$X_0=X_{x,\eps^{-\beta}r}\in\om$ is a corkscrew point for $x$ at scale $\eps^{-\beta}r$, with $0<\beta\le\beta_0$ for some $\beta_0$ that depends only on $d$ and $\mu_0$.
\end{thm}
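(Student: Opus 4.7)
The plan is to reduce Theorem \ref{thmCAS} to the small Lipschitz graph case of Theorem \ref{t1b2} (proved in Section \ref{SLip}) by approximating $\Omega$ from outside using Corollary \ref{cor U} and comparing elliptic measures in $\Omega$ and the approximating Lipschitz domain. Fix $x\in\Gamma$, $r>0$, and set $R=\varepsilon^{-\beta}r$. Applying Corollary \ref{cor U} at scale $10R$ centered at $x$, we obtain an $\eta$-Lipschitz graph $G$ with $\eta=\varepsilon^{1/(2d)}$ and a component $U$ of $\R^{d+1}\setminus G$ satisfying $\Omega\cap B(x,10R)\subset U$ and
\begin{equation*}
\H^d\bigl((\Gamma\triangle G)\cap B(x,10R)\bigr)\leq C\varepsilon^{1/2}R^d=C\varepsilon^{1/2-\beta d}r^d.
\end{equation*}
Since $X_0=X_{x,R}$ is an interior corkscrew for $\Omega$ at scale $R$ and lies inside $U\cap B(x,10R)$, it serves as an interior corkscrew for $U$ at scale comparable to $R$.

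Next, extend $A$ from $\Omega$ to $U$ via a Whitney-type reflection across $\Gamma$, producing a $\mu_0'$-elliptic matrix $\wt A$ on $U$; the reflection is only active in the thin pocket $U\setminus\overline\Omega$ and is glued smoothly in a Whitney neighborhood of $\Gamma\cap G$. A direct Carleson-norm computation, paralleling Lemma \ref{lem cNwtA} but using the $\H^d$-smallness of $U\setminus\Omega$ together with $\cN(A)<\varepsilon$, yields $\cN(\wt A)\lesssim\varepsilon^{1/2}$. Apply Theorem \ref{t1b2} for $\eta$-Lipschitz graphs to $(U,\wt L=-\divg(\wt A\nabla),X_0)$ with $\kappa=2$ and $\tau=\varepsilon^\beta$; for $\varepsilon$ small enough depending on $\delta$, $d$, and $\mu_0$, this gives
\begin{equation*}
\left|\frac{\omega_U^{X_0}(F)}{\omega_U^{X_0}(\Delta_G)}-\frac{\sigma_G(F)}{\sigma_G(\Delta_G)}\right|<\frac{\delta}{4} \qquad \text{for every Borel } F\subset\Delta_G:=G\cap B(x,r),
\end{equation*}
where $\sigma_G=\H^d_{|G}$.

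It remains to transfer this estimate to $\omega_\Omega^{X_0}$ on $\Delta=\Gamma\cap B(x,r)$. The inclusion $\Omega\subset U$ (in $B(x,10R)$) and the maximum principle applied in $U\setminus\overline\Omega$ give, for any Borel $F\subset\Gamma\cap G\cap B(x,r)$,
\begin{equation*}
\bigl|\omega_U^{X_0}(F)-\omega_\Omega^{X_0}(F)\bigr|\leq\omega_U^{X_0}(G\setminus\Gamma)+\omega_\Omega^{X_0}(\Gamma\setminus G).
\end{equation*}
Both right-hand terms are bounded by the big-constant $A_\infty$ estimate (the $CASSC$ analogue of Lemma \ref{lem DKPtoAinfty}, obtained by combining \cite{kenig2001dirichlet} with \cite{david1990lipschitz} on the NTA domain $\Omega$) combined with the measure bound from Corollary \ref{cor U}, producing a term of size $O(\varepsilon^{(1/2-\beta d)\eta_0})$ for some $\eta_0=\eta_0(d,\mu_0)>0$, while Bourgain's estimate (Lemma \ref{lem Bourgain}) yields $\omega_\Omega^{X_0}(\Delta)\gtrsim 1$. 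Combined with the analogous surface-measure bound $\sigma((\Gamma\triangle G)\cap B(x,r))\lesssim\varepsilon^{1/2-\beta d}r^d\ll\sigma(\Delta)\asymp r^d$, decomposing $E=(E\cap G)\sqcup(E\setminus G)$ and propagating the errors through triangle inequalities produces the desired $\delta$-estimate, provided $\beta_0<1/(2d)$ so that $\varepsilon^{1/2-\beta d}$ is a genuinely small positive power of $\varepsilon$. The main obstacle is the extension step: since the weak DKP condition does not cooperate well with restriction or extension (as emphasized in the introduction), constructing $\wt A$ with small $\cN(\wt A)$ requires a carefully calibrated Whitney-type reflection attuned to the smallness of $\Gamma\triangle G$ and a delicate Carleson-norm accounting on $U$, comparable in spirit to the analysis in Lemma \ref{lem cNwtA}.
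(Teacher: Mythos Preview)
Your overall architecture matches the paper's: approximate $\Omega$ from outside by a Lipschitz domain $U$ via Corollary \ref{cor U}, extend $A$ to $U$, apply the Lipschitz case, and transfer back. But the transfer step has a genuine gap.

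You bound $\omega_\Omega^{X_0}(\Gamma\setminus G)$ by invoking ``the big-constant $A_\infty$ estimate \dots\ obtained by combining \cite{kenig2001dirichlet} with \cite{david1990lipschitz}''. This result is not available here: \cite{kenig2001dirichlet} treats the (strong) DKP condition, not the weak one, and the standard \cite{david1990lipschitz}-type transfer from Lipschitz to NTA uses an \emph{inside} Lipschitz approximation. As the introduction emphasizes, weak DKP does not restrict well to subdomains---for $x'\in G'\setminus\Gamma$ on an interior approximating graph, the Whitney boxes relative to $G'$ can come arbitrarily close to $\Gamma$, where $A$ is allowed large oscillation. So big-constant $A_\infty$ for $\omega_\Omega$ is precisely part of what the theorem is establishing; you cannot use it as input. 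The paper circumvents this by never appealing to $A_\infty$ on $\Omega$: it introduces the shadow set $F\subset G$ of $\mathcal Z=\Gamma\cap B\setminus G$ (see \eqref{def F}), uses Bourgain's estimate to get $\omega_U^{Z}(F)\gtrsim 1$ for every $Z\in\mathcal Z$, and then the maximum principle in $\Omega\cap B$ yields $\omega_{\Omega\cap B}^{X_0}(\mathcal Z)\lesssim\omega_U^{X_0}(F)$. The right-hand side is controlled by $A_\infty$ on the \emph{Lipschitz} domain $U$, which is already proved.

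Two further points. First, your maximum-principle comparison tacitly assumes $\Omega\subset U$ globally, but Corollary \ref{cor U} only gives this inside $B(x,10R)$; the paper handles this via the localization Lemmas \ref{lem loc1}--\ref{lem loc2}, passing to $\Omega\cap B$ and $U\cap B$ at the cost of a factor $1+O(K^{-\alpha})$. Second, the extension is not a ``reflection'': the paper (Section \ref{SExt}) uses a Whitney partition of $\R^{d+1}\setminus\overline{\Omega\cap 2B}$ and sets $\wt A=\sum_i A_i\chi_i$ with $A_i$ an average of $A$ over carefully chosen regions $W_i\subset\Omega$, freezing $\wt A$ far from $B$; this yields $\cN(\wt A)\le C\varepsilon$ (not just $\varepsilon^{1/2}$), and the Carleson accounting is the content of that section rather than a parallel of Lemma \ref{lem cNwtA}.
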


\begin{rem}
Although Theorem \ref{thmCAS} is stated differently from Theorem \ref{t1b2}, one can check that the latter can be deduced from the former. In fact, let $\delta>0$, $\kappa>1$ and $X\in\om$ be fixed. Take $\Delta=\Delta(x,r)\subset B(X,\kappa\,\delta(X))$ with $r\le\tau\,\delta(X)$ and $\tau>0$ to be determined, where $\delta(X)=\dist(X,\Gamma)$, and let $E\subset\Delta$. By Theorem \ref{thmCAS}, there exists an $\epsilon_0>0$ such that 
\begin{equation}\label{eqX0delta}
    \abs{\frac{\omega^{X_0}(E)}{\omega^{X_0}(\Delta)}-\frac{\sigma(E)}{\sigma(\Delta)}}<\delta/2
\end{equation} whenever 
$X_0\in\om$ is a corkscrew point for $x$ at scale $\eps_0^{-\beta_0}r$, for some $\beta_0=\beta_0(d,\mu_0)>0$. By a change-of-pole argument similar to Lemma \ref{lem polediff}, one can show that 
\begin{equation}\label{eqXX0delta}
    \abs{\frac{\omega^{X_0}(E)}{\omega^{X_0}(\Delta)}-\frac{\omega^{X}(E)}{\omega^{X}(\Delta)}}\le C\br{\frac{r}{d}}^\gamma \quad \text{for }r<\frac{d}{10}
\end{equation}
for some $\gamma=\gamma(d,\mu_0)>0$, where $d=\min\set{\delta(X),\delta(X_0)}$. Note that we can assume that $d=\delta(X_0)$ by taking $\tau$ sufficiently small (for instance, let $\tau=\epsilon_0^{\beta_0}/2$). Now we take $\epsilon_0$ sufficiently small so that $\epsilon_0^{\beta_0\gamma}<\frac{\delta}{10C}$, then Theorem \ref{t1b2} follows from \eqref{eqX0delta}, \eqref{eqXX0delta}, and the triangle inequality.
Theorem \ref{t1b1} can be deduced from Theorem \ref{t1b2} using Lemma \ref{lem polediff}.
\end{rem}

 We shall need the following two lemmata that enable us 
 to localize the elliptic measures.
\begin{lem}\label{lem loc1}
There exists a constant $M\ge1$ depending on the dimension so that the following holds. Let $x\in\Gamma$ and $r>0$ be given, and $L=-\divg A\nabla$ be a $\mu_0$-elliptic operator. Let $M'\ge M^2$. Let $\omega$ be the $L$-elliptic measure of $\Omega$ with pole at $X_0=X_{x,2M'r}$
and let $\wt\omega$ be  the $L$-elliptic measure of $\Omega\cap B(x,4M'r)$ with pole at $X_0$. Then $\omega|_{\Delta(x,r)}$ and $\wt\omega|_{\Delta(x,r)}$ are mutually absolutely continuous, and 
\begin{equation}\label{eq5d1}
    \frac{d\wt\omega}{d\omega}(y)\approx 1, \quad \omega\text{ a.e. }y\in \Delta(x,r),
\end{equation}
where the implicit constants depend on $d$, $\mu_0$, and the NTA constant. Therefore,  
\begin{equation}\label{5d1}
    \wt\omega(E)\approx\omega(E) \quad \text{for any Borel set $E\subset\Delta(x,r)$},
\end{equation}
where the implicit constants depend on $d$, $\mu_0$, and the NTA constant.
\end{lem}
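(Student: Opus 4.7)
The plan is to establish the two-sided comparison via a representation of $\omega^{X_0}(E)$ through the localized measure $\wt\omega$ together with a boundary Harnack argument. The upper bound $\wt\omega^{X_0}(E)\le\omega^{X_0}(E)$ is the easy direction: set $u(X)=\omega^X(E)$, which is $L$-harmonic in $U:=\Omega\cap B(x,4M'r)$, equals $\mathbf{1}_E$ on $\partial\Omega\cap\overline{B(x,4M'r)}$ (since $E\subset\Delta(x,r)$), and is non-negative on $S:=\partial B(x,4M'r)\cap\Omega$. The Riesz representation of $u(X_0)$ by the elliptic measure $\wt\omega$ of $U$ yields the identity
\begin{equation}\label{eq:locrepr}
\omega^{X_0}(E)=\wt\omega^{X_0}(E)+\int_{S}\omega^Y(E)\,d\wt\omega^{X_0}(Y),
\end{equation}
and dropping the non-negative remainder gives the claim.

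For the reverse inequality I would control the integral on $S$ by the comparison principle (Lemma~\ref{lcomparison}). The functions $X\mapsto\omega^X(E)$ and $X\mapsto\omega^X(\Delta(x,r))$ are non-negative $L$-solutions in $\Omega$ that both vanish on $\Gamma\setminus\overline{\Delta(x,r)}$. Pick, for each $Y\in S$, a nearest point $\xi_Y\in\Gamma$; since $|\xi_Y-x|\approx M'r$, the surface ball $\Delta(\xi_Y,M'r/M)$ is disjoint from $\overline{\Delta(x,r)}$, and Lemma~\ref{lcomparison} at scale $M'r/M$ followed by a Harnack chain inside $\Omega$ (using NTA and $M'\ge M^2$) produces
\[
\frac{\omega^Y(E)}{\omega^Y(\Delta(x,r))}\lesssim\frac{\omega^{X_0}(E)}{\omega^{X_0}(\Delta(x,r))}
\qquad\text{for every }Y\in S.
\]
Substituting this into \eqref{eq:locrepr} and applying \eqref{eq:locrepr} once again with $E=\Delta(x,r)$ to identify $\int_S\omega^Y(\Delta(x,r))\,d\wt\omega^{X_0}(Y)=\omega^{X_0}(\Delta(x,r))-\wt\omega^{X_0}(\Delta(x,r))$ leads, after dividing by $\omega^{X_0}(E)$, to the master inequality
\[
1-\frac{\wt\omega^{X_0}(E)}{\omega^{X_0}(E)}\le C\Bigl(1-\frac{\wt\omega^{X_0}(\Delta(x,r))}{\omega^{X_0}(\Delta(x,r))}\Bigr).
\]

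It then suffices to produce an absolute lower bound $\wt\omega^{X_0}(\Delta(x,r))\ge c\,\omega^{X_0}(\Delta(x,r))$ with $c>0$ independent of $M'$. For this I would observe that $U$ is itself an NTA domain (with constants controlled in terms of those of $\Omega$ once $M'\ge M^2$) and that $X_{x,r}$ is simultaneously a corkscrew for $x$ at scale $r$ in $\Omega$ and in $U$. Bourgain's estimate in $U$ (Lemma~\ref{lem Bourgain}) gives $\wt\omega^{X_{x,r}}(\Delta(x,r))\gtrsim 1$, and two Harnack chains from $X_{x,r}$ to $X_0$, one inside $\Omega$ and one inside $U$, yield the same multiplicative factor (up to a universal constant) because both chains can be chosen of equal length and stay at comparable distances from the respective boundaries; hence $\wt\omega^{X_0}(\Delta(x,r))/\omega^{X_0}(\Delta(x,r))\approx\wt\omega^{X_{x,r}}(\Delta(x,r))/\omega^{X_{x,r}}(\Delta(x,r))\gtrsim 1$ by Bourgain applied in $\Omega$ and $U$. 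Once both directions are in hand, \eqref{5d1} follows, and the pointwise identity \eqref{eq5d1} is obtained by applying the comparison to every Borel $F\subset\Delta(x,r)$ and invoking the Lebesgue--Radon--Nikodym theorem.

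The main obstacle is the uniformity in $M'$: one must verify that the Harnack chain factors appearing in the $\Omega$-estimate and in the $U$-estimate are comparable, so that they cancel in the ratio and the constant $c$ comes out independent of $M'$. This requires inspecting that $U=\Omega\cap B(x,4M'r)$ inherits NTA constants depending only on those of $\Omega$ (as soon as $M'\ge M^2$) and that one can pick matched Harnack chains in $\Omega$ and $U$ of the same combinatorial length.
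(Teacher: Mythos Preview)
Your representation formula \eqref{eq:locrepr} and the easy direction $\wt\omega^{X_0}(E)\le\omega^{X_0}(E)$ are fine, and the change-of-pole estimate $\omega^Y(E)/\omega^Y(\Delta(x,r))\approx\omega^{X_0}(E)/\omega^{X_0}(\Delta(x,r))$ for $Y\in S$ does hold with constants depending only on $d,\mu_0$ and the NTA constants. The trouble is the final step. Harnack chains do \emph{not} cancel in a ratio: applying Harnack to $X\mapsto\wt\omega^X(\Delta)$ and $X\mapsto\omega^X(\Delta)$ separately along a chain of length $N\approx\log M'$ only yields
\[
C^{-2N}\,\frac{\wt\omega^{X_{x,r}}(\Delta)}{\omega^{X_{x,r}}(\Delta)}
\;\le\;\frac{\wt\omega^{X_0}(\Delta)}{\omega^{X_0}(\Delta)}
\;\le\;C^{2N}\,\frac{\wt\omega^{X_{x,r}}(\Delta)}{\omega^{X_{x,r}}(\Delta)},
\]
so the bound degenerates as $M'\to\infty$. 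Even if you somehow obtained an absolute lower bound $\wt\omega^{X_0}(\Delta)/\omega^{X_0}(\Delta)\ge c_0$, your master inequality gives $\wt\omega^{X_0}(E)/\omega^{X_0}(E)\ge 1-C(1-c_0)$, which is only positive when $c_0>1-1/C$; since $C$ is the change-of-pole constant and $c_0$ comes from Bourgain, there is no reason to expect this. Thus the bootstrap from the single scale $\Delta(x,r)$ to all $E\subset\Delta(x,r)$ fails.

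The paper bypasses this by working at \emph{every} sub-ball simultaneously through the Green function. For $B(y,s)\subset B(x,r)$, CFMS (Lemma~\ref{lem cfms}) plus the symmetry $G_L(X,Y)=G_{L^\top}(Y,X)$ give
\[
\frac{\wt\omega(\Delta(y,s))}{\omega(\Delta(y,s))}\approx\frac{\wt G^{\top}(X_{y,s},X_0)}{G^{\top}(X_{y,s},X_0)}.
\]
Both $\wt G^{\top}(\cdot,X_0)$ and $G^{\top}(\cdot,X_0)$ are nonnegative $L^\top$-solutions in $B(x,2Mr)\cap\Omega$ vanishing on $\Delta(x,2Mr)$ (here $M'\ge M^2$ ensures the pole $X_0$ stays outside), so the comparison principle (Lemma~\ref{lcomparison}) freezes the ratio over all $X_{y,s}$ to its value at $X_{x,r}$, and the pointwise Green bounds \eqref{eq2.green}--\eqref{eq2.green2} show that value is $\approx 1$. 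Lebesgue differentiation then gives $d\wt\omega/d\omega\approx 1$. The point is that boundary Harnack applied to the \emph{Green functions} transfers the single comparison near $X_0$ down to every scale at once, whereas your route tries to transfer one elliptic-measure comparison via a master inequality that is too lossy.
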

\begin{proof}
Let $y\in\Gamma$ and $s>0$ be such that $B(y,s)\subset B(x,r)$. By Lemma \ref{lem cfms}, and \eqref{eq2.green3}, 
\[
\frac{\wt\omega(\Delta(y,s))}{\omega(\Delta(y,s))}\approx\frac{\wt G(X_0,X_{y,s})}{G(X_0,X_{y,s})}\approx\frac{\wt G^\top(X_{y,s},X_0)}{G^\top(X_{y,s},X_0)},
\]
where $\wt G(\cdot,Y)$ is the $L$-Green function for $\Omega\cap B(x,4M'r)$ with pole at $Y$, and $G(\cdot, Y)$ is the $L$-Green function for $\Omega$ with pole at $Y$. Since $\abs{X_0-x}\ge\dist(X_0,\pom)\ge 2M'r/M$ and we have chosen $M'\ge M^2$, $\wt G^\top(\cdot, X_0)$ and $G^\top(\cdot, X_0)$ are both solutions to $L^\top u=0$ in $B(x,2Mr)\cap\Omega$ that vanish on $\Delta(x,2Mr)$. By the comparison principle (Lemma \ref{lcomparison}) and the estimates \eqref{eq2.green} \eqref{eq2.green2} for the Green function, 
\[
\frac{\wt G^\top(X_{y,s},X_0)}{G^\top(X_{y,s},X_0)}\approx\frac{\wt G^\top(X_{x,r},X_0)}{G^\top(X_{x,r},X_0)}\approx1,
\]
and thus 
\[
\frac{\wt\omega(\Delta(y,s))}{\omega(\Delta(y,s))}\approx 1.
\]
Since the elliptic measures are regular and doubling, this implies that $\wt\omega|_{\Delta(x,r)}$ and $\omega|_{\Delta(x,r)}$ are mutually absolutely continuous. Then the Lebesgue differentiation theorem asserts that 
\[
\frac{d\wt\omega}{d\omega}(y)=\lim_{s\to 0}\frac{\wt\omega(\Delta(y,s))}{\omega(\Delta(y,s))}\approx 1 \quad \omega\text{ a.e. } y\in\Delta(x,r),
\]
which proves \eqref{eq5d1}. Then 
\eqref{5d1} is an immediate consequence of \eqref{eq5d1} since we can write
\(
\wt\omega(E)=\int_E\frac{d\wt\omega}{d\omega}(y)d\omega(y)\).
\end{proof}

\begin{lem}\label{lem loc2}
There exists a constant $M\ge1$ depending on the dimension so that the following holds. Let $M'\ge M^2$, $r>0$, $x\in\Gamma$, and $L$ be as in Lemma \ref{lem loc1}. Let $\omega$ be the $L$-elliptic measure of $\Omega$ with pole at $X_0= X_{x,M'r}$, and let $\wt\omega$ be the $L$-elliptic measure of $\Omega\cap B(x,M'r)$ with pole at $X_0$. Then 
\begin{equation}\label{5d2}
    \frac{d\wt\omega}{d\omega}(y)=\lim_{Y\to y}\frac{\wt G(X_0,Y)}{G(X_0,Y)} 
    \quad \text{for }\omega \text{ a.e. } y\in \Delta(x,r),
\end{equation}
where $\wt G(\cdot,Y)$ is the $L$-Green function for $\Omega\cap B(x,M'r)$, $G(\cdot,Y)$ is the $L$-Green function for $\Omega$, and the limit is taken in $B(x,M'r)\cap\Omega$. Moreover, for $E$, $E'\subset\Gamma\cap B(x,r)$,
\begin{equation}\label{5d3}
    \frac{1-\frac{C}{M'^\alpha}}{1+\frac{C}{M'^\alpha}}
  \,\,  \frac{\wt\omega(E)}{\wt\omega(E')}\le\frac{\omega(E)}{\omega(E')}\le 
  \frac{1+\frac{C}{M'^\alpha}}{1-\frac{C}{M'^\alpha}}
  \, \, \frac{\wt\omega(E)}{\wt\omega(E')},
\end{equation}
where $C$ and $\alpha$ are positive constants that depend on $d$, $\mu_0$, and the NTA constant of $\Omega$.
\end{lem}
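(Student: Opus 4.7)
The plan is to combine three standard ingredients: the Lebesgue differentiation theorem applied to the pair $(\wt\omega, \omega)$, the CFMS estimates (Lemma \ref{lem cfms}) applied in both $\Omega$ and $\Omega\cap B(x, M'r)$, and the boundary H\"older estimate for the ratio of two positive $L^\top$-harmonic functions vanishing on a common piece of $\bdy\Omega$ (a standard consequence of the comparison principle in NTA domains, see \cite{jerison1982boundary}). Write $u(Y) := G(X_0, Y)$ and $\wt u(Y) := \wt G(X_0, Y)$; by the symmetry $G_L(X,Y) = G_{L^\top}(Y,X)$, both $u$ and $\wt u$ are $L^\top$-harmonic away from $X_0$. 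The assumption $M' \ge M^2$ places $X_0$ at distance $\ge M'r/M \ge Mr$ from $\Delta(x,r)$, so CFMS applies uniformly at all surface balls $\Delta(y,s) \subset \Delta(x, r)$ with $s \le r$, and both $u, \wt u$ vanish continuously on $\Delta(x, M'r/M)$.

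For \eqref{5d2}, I would first repeat the argument of Lemma \ref{lem loc1} (adapted to the present pole and localization scale) to get $\wt\omega|_{\Delta(x,r)} \ll \omega|_{\Delta(x,r)}$. The boundary H\"older ratio estimate then shows that $\wt u/u$ extends continuously up to $\Delta(x, M'r/M^2) \supset \Delta(x, r)$; let $h$ denote this extension. By CFMS in both domains,
\[
\frac{\wt\omega(\Delta(y,s))}{\omega(\Delta(y,s))} \approx \frac{\wt u(X_{y,s})}{u(X_{y,s})} \to h(y) \quad\text{as } s \to 0,
\]
and by Lebesgue differentiation the left-hand side also converges to $d\wt\omega/d\omega(y)$ for $\omega$-a.e.\ $y$. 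Identifying the two limits (see the obstacle paragraph below) gives \eqref{5d2}.

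For \eqref{5d3}, the quantitative form of the boundary H\"older estimate, combined with the comparison principle (Lemma \ref{lcomparison}) at scale $\sim M'r/M$ used to absorb the reference value $\wt u(X_{x,M'r/M})/u(X_{x,M'r/M})$ into $\wt u(Y)/u(Y)$, yields
\[
\left| \frac{\wt u(Y)}{u(Y)} - \frac{\wt u(Z)}{u(Z)} \right|
\le C \left( \frac{|Y-Z|}{M'r/M} \right)^\alpha \frac{\wt u(Y)}{u(Y)}
\le \frac{C}{{M'}^\alpha} \cdot \frac{\wt u(Y)}{u(Y)}
\]
for all $Y, Z \in B(x, r) \cap \overline{\Omega}$, with $\alpha = \alpha(d, \mu_0, \mathrm{NTA}) > 0$. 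Passing to the boundary limit via \eqref{5d2}, the density $h = d\wt\omega/d\omega$ satisfies
\[
(1 - C/{M'}^\alpha)\, h(y) \le h(z) \le (1 + C/{M'}^\alpha)\, h(y), \qquad y, z \in \Delta(x, r).
\]
Writing $\wt\omega(E) = \int_E h\, d\omega$ for Borel $E \subset \Delta(x, r)$, the ratio $\wt\omega(E)/\wt\omega(E')$ is then pinched between these factors times $\omega(E)/\omega(E')$, and rearranging yields \eqref{5d3}.

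The main technical point is the identification of the Lebesgue-differentiation limit of $\wt\omega(\Delta(y,s))/\omega(\Delta(y,s))$ with the interior limit $h(y)$ of $\wt u/u$: CFMS by itself only provides comparability up to fixed multiplicative constants, so to turn this into an equality of limits one must use the uniform boundary continuity of $\wt u/u$ (from the quantitative H\"older estimate) to show that the multiplicative CFMS error tends to $1$ as $s \to 0$. This is a standard but slightly delicate step in the NTA theory. Everything else is bookkeeping with the scale gap $M' \ge M^2$.
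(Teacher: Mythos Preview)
Your treatment of \eqref{5d3} is correct and matches the paper's: once \eqref{5d2} is in hand, the boundary H\"older estimate for $\wt u/u$ (equivalently for $u/\wt u$) pins the density to within a factor $1 \pm C/{M'}^\alpha$ of its value at a reference point, and integrating gives the two-sided bound on the ratio of ratios. The paper phrases this with $\mathcal L = d\omega/d\wt\omega = (d\wt\omega/d\omega)^{-1}$ and cites \cite{kenig1997harmonic}, but the argument is the same.

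The gap is in \eqref{5d2}. Your route is CFMS $+$ Lebesgue differentiation: from
\[
\frac{\wt\omega(\Delta(y,s))}{\omega(\Delta(y,s))} \;\approx\; \frac{\wt u(X_{y,s})}{u(X_{y,s})}
\]
and $\wt u(X_{y,s})/u(X_{y,s}) \to h(y)$, you want to conclude that the left side also converges to $h(y)$. But the $\approx$ carries the structural CFMS constants, which are fixed (depending on $d$, $\mu_0$, and the NTA constants) and do \emph{not} tend to $1$ as $s\to 0$. The H\"older continuity of $\wt u/u$ only tells you that $\wt u(X_{y,s})/u(X_{y,s})$ converges; it says nothing about the separate CFMS ratios $\wt\omega(\Delta)/s^{d-1}\wt u(X_{y,s})$ and $\omega(\Delta)/s^{d-1} u(X_{y,s})$, nor that their quotient tends to $1$. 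So from your argument you only get $c\,h(y) \le d\wt\omega/d\omega(y) \le C\,h(y)$, which is Lemma~\ref{lem loc1} again, not \eqref{5d2}.

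The paper's sketch points to the correct fix: use the Riesz representation \eqref{Rieszinfty} (with finite pole) and the boundary Caccioppoli inequality. Concretely, writing $\wt u = h(y)u + w$ on $B(y,s)\cap\Omega$ with $|w| \le C s^\alpha u$ there (this is exactly the H\"older estimate you quote), one tests the Riesz formula for $\wt\omega$ against a cutoff $F$ supported in $B(y,s)$; the contribution of $h(y)u$ gives $h(y)\,\omega(f)$, and the contribution of $w$ is controlled via boundary Caccioppoli by $C s^\alpha \omega(\Delta(y,s))$. Dividing by $\omega(\Delta(y,s))$ and letting $s\to 0$ yields the exact identity. This is the content of \cite{bortz2020optimal}, Lemma~5.1, to which the paper refers. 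Your proposal is missing precisely this integration-by-parts step; without it the identification of limits fails.
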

\begin{proof}[Sketch of proof] 
By Lemma \ref{lem loc1}, $\wt\omega|_{\Delta(x,r)}$ and $\omega|_\Delta(x,r)$ are mutually absolutely continuous, 
and \[
\frac{d\omega}{d\wt\omega}(y)=\lim_{s\to 0}\frac{\omega(\Delta(y,s))}{\wt\omega(\Delta(y,s))} \quad \wt\omega\text{ a.e. } y\in\Delta(x,r).
\]
Then 
\eqref{5d2} can be proved using the 
Riesz formula, CFMS estimates, 
the 
ellipticity of the operator and the boundary Caccioppoli inequality. We refer the readers to \cite{bortz2020optimal} Lemma 5.1 for details. Now 
\eqref{5d3} is a consequence of \eqref{5d2}, as one can write
\[
\omega(E)=\int_E\frac{d\omega}{d\wt\omega}(y)d\wt\omega(y)=\int_E\mathcal{L}(y)d\wt\omega(y),
\]
where $\mathcal{L}(y)=\lim_{Y\to y}\frac{G(X_0,Y)}{\wt G(X_0,Y)}$. By the comparison principle, one can show that \[
\br{1-\frac{C}{M'^\alpha}}\mathcal{L}(x)\le\mathcal{L}(y)\le \br{1+\frac{C}{M'^\alpha}}\mathcal{L}(x) \quad\text{for }y\in\Delta(x,r).
\]
From here, \eqref{5d3} follows. Details can be found in e.g. \cite{kenig1997harmonic} Corollary 4.2.
\end{proof}
\ms
Now we start our proof of Theorem \ref{thmCAS}. Let $\delta\in(0,\frac{1}{10})$, $x_0\in\Gamma$ and 
$r_0>0$ be given. Let $\Delta_0=B(x_0,r_0)\cap\Gamma$, and let $E\subset\Delta_0$.
Let $M$ be the largest of the 
two constants $M$ in Lemmata \ref{lem loc1} and \ref{lem loc2}, and take $K\ge M^2$ to be determined. 
Set $X_0=X_{x_0,Kr_0}\in\Omega\cap B(x_0, Kr_0)$ to be a 
corkscrew point for $x_0$ at scale $Kr_0$. 

By Corollary \ref{cor U}, there exists an $\epsilon^{\frac{1}{2d}}$-Lipschitz graph $G=\set{(y,A(y)): y\in\rd}$ 
such that for one of the connected components $U$ of $\ree\sm G$, we have 
\begin{equation}\label{eq Om<U}
    \Omega\cap B(x_0, 10 K r_0)\subset U
\end{equation}
and 
\begin{equation}\label{eq GammaG sm}
    \H^d(\Gamma \cap B(x_0, 10 K r_0)\sm G) + \H^d(G \cap B(x_0,10Kr_0)\sm \Gamma)
\leq C K^{d}\varepsilon^{1/2} r_0^d. 
\end{equation}

Similar to \eqref{5b5}, we have that
\begin{align}\label{distGGamma}
\begin{split}
       & \dist(y,G) \leq C K\varepsilon^{\frac1{2d}} r_0 \text{ for } y \in \Gamma \cap B(x_0,10Kr_0/2),\\
&\dist(y,\Gamma) \leq C K\varepsilon^{\frac1{2d}}r_0 \text{ for } y \in G \cap B(x_0,10Kr_0/2).
\end{split}
\end{align}
We shall choose
\begin{equation}\label{KK}
K = K(\varepsilon) = \varepsilon^{-\beta},
\end{equation}
with a small constant $\beta > 0$ that will be chosen near the end of the argument, and then $\varepsilon
=\epsilon(\delta,d,\mu_0)>0$ sufficiently small. Thus $K$ is as large as we want, and in particular
we shall choose $\varepsilon$ so small that $K \geq M$.
We intend to 
show that for $\epsilon=\epsilon(\delta,d,\mu_0)>0$ sufficiently small,
\begin{equation}\label{eq goal}
    \frac{\omega^{X_0}_\Omega(E)}{\omega^{X_0}_\om(\Delta_0)}\ge\frac{\sigma(E)}{\sigma(\Delta_0)}-\delta.
\end{equation}
Once we have \eqref{eq goal} for any set $E\subset\Delta_0$, we obtain that
\(
\frac{\omega^{X_0}_\Omega(E)}{\omega^{X_0}_\om(\Delta_0)}\le\frac{\sigma(E)}{\sigma(\Delta_0)}+\delta
\) by taking $E$ to be $\Delta_0\sm E$ in \eqref{eq goal}. Therefore, it suffices to show \eqref{eq goal}.
\ms

Set $B = B(x_0,Kr_0)$ to lighten the notation. We first transfer elliptic measures from 
$\Omega$ to $\Omega\cap B$; this will be convenient because $\Omega\cap B$ is contained in $U$, 
while $\Omega$ may not be.

Recall that $L= -\divg A\nabla$. In Section~\ref{SExt}, we will show that there is a
$\mu_0$-elliptic operator $\wt L=-\divg \wt A\nabla$, that satisfies the weak DKP condition in $U$
with constant $\cN(\wt A)\le C\epsilon$, and such that 
\begin{equation}\label{adda}
\wt A(X) = A(X) \text{ on } \Omega\cap B(x_0,2Kr_0)\subset U.
\end{equation}
Denote by $\omega$ the $L$-elliptic measure of $\Omega\cap B$ with pole at $X_0$
and by $\omega'$ the $\wt L$-elliptic measure of $U\cap B$ with pole at $X_0$. 
Define 
\begin{equation}\label{def Z}
    \mathcal{Z}:= \Gamma\cap B\sm G.
\end{equation}
Observe that by \eqref{eq GammaG sm}, 
\begin{equation}\label{Hd(Z)}
    \H^d(\mZ)\le CK^{d}\eps^{1/2}r_0^d. 
\end{equation}
Our first claim is that for any set $H\subset \Gamma\cap G\cap B(x_0,r_0)$,
\begin{equation}\label{eq ZH}
    \omega'(H)\le \omega(\mZ)+\omega(H).
\end{equation}
\begin{proof}[Proof of \eqref{eq ZH}]
Let $H_n\subset H$ be closed sets in $G\cap\Gamma$ such that $H_n\uparrow H$ as $n\to\infty$. Let $O_n\subset G\cap B(x_0,2r_0)$ be open sets such that $O_n\downarrow H$. Let $g_n\in C^\infty_c(O_n)$ be such that $\1_{H_n}\le g_n\le \1_{O_n}$. For $X\in U\cap B$, define $u_n(X)=\int g_n(y)d\omega'^X(y)$, where $\omega'^X$ is the $\wt L$-elliptic measure of $U\cap B$ with pole at $X$. Then by the definition of elliptic measures, $\wt Lu_n=0$ in $U\cap B$ and $u$ is continuous in $\overline{U\cap B}$. Since $\bdy(\om\cap B)\subset \overline{U\cap B}$, $u_n$ is continuous on $\bdy(\om\cap B)$. Let $v_n(X)=\int u_n(y)d\omega^X(y)$, where $\omega^X$ is the $L$-elliptic measure of $\Omega\cap B$ with pole at $X$. Then $Lv_n=0$ in $\om\cap B$, with $v_n=u_n$ on $\bdy(\om\cap B)$. Since $\wt L= L$ in $\Omega\cap B$, the maximum principle implies that $v_n=u_n$ in $\om\cap B$. Observe that 
\(
\bdy(\om\cap B)=(\Gamma\cap G\cap B)\cup \mZ \cup (\bdy B\cap \om)
\),
and that $u_n=g_n$ on $(\Gamma\cap G\cap B)\cup (\bdy B\cap \om)$. So for $X\in\om\cap B$,
\begin{multline*}
    u_n(X)=v_n(X)=\int_{\Gamma\cap G\cap B}g_n(y)d\omega^X(y)+\int_\mZ u_n(y)d\omega^X(y)+\int_{\bdy B\cap\om}g_n(y)d\omega^X(y)\\
    =\int_{\Gamma\cap O_n}g_n(y)d\omega^X(y)+\int_\mZ u_n(y)d\omega^X(y)\le \omega^X(O_n\cap\Gamma)+\omega^X(\mZ),
\end{multline*}
where in the last inequality we have used $g_n\le \1_{O_n}$ and $u_n\le 1$. On the other hand, since $g_n\ge \1_{H_n}$,
\[
u_n(X)\ge \int_{H_n}d\omega'^X(y)=\omega'^X(H_n).
\]
So we get that 
\[
\omega'^X(H_n)\le \omega^X(O_n\cap\Gamma)+\omega^X(\mZ).
\]
Then \eqref{eq ZH} follows from taking $n\to\infty$ and by the 
regularity of elliptic measures. 
\end{proof}
Now we define another set $F$, which we think of as the 
large shadow of $\mZ$ on $G$, which is defined as 
\begin{equation}\label{def F}
    F:=\bigcup_{Z\in\mZ} G  \cap B(Z, 20 \,\delta(Z)).
\end{equation}
Here, and in the sequel, $\delta(Z) = \dist(Z, G)$. Notice that by \eqref{distGGamma}, 
$\delta(Z)\le C\eps^{\frac{1}{2d}}K r_0$ 
for all $Z\in\mZ$. We show that 
\begin{equation}\label{eq Fsm}
    \H^d(F)\le CK^{d}\epsilon^{1/2}r_0^d. 
\end{equation}
\begin{proof}[Proof of \eqref{eq Fsm}]
For each $Z\in\mZ$, let $B_Z=B(Z,r(Z))$ with $r(Z)=\dist(Z,G)/100$. Then by the Vitali covering lemma, we can find a countable collection 
of balls $B_{Z}$, $Z \in I$, such that the $B_Z$ are disjoint but $\mZ\subset\underset{Z\in I}{\bigcup}5B_Z$.
Recall that $G$ is the graph of $A: \rd\to\Real$. Write $x_0=(x,t_0)$, where $x\in\rd$ and $t_0\in\Real$. By \eqref{distGGamma}, for $\eps$ sufficiently small, we have that \(F\subset \set{(y,A(y)): y\in\rd\cap B(x,2Kr_0)}\). Set 
\[
H:=\set{y\in\rd\cap B(x,2Kr_0): (y, A(y))\in F}.
\]
Observe that for any fixed $y\in H$, there exists a $Z=(z,t)\in\mZ$ such that $\abs{(y,A(y))-(z,t)}\le 20\delta(Z)$. But there must be a $Z'=(z',t')\in I$ such that $Z\in 5B_{Z'}$. So 
\[
\delta(Z)\le \delta(Z')+\abs{Z-Z'}\le \delta(Z')+5r(Z')\le 2\delta(Z').
\]
Then
\[
\abs{y-z'}\le\abs{y-z}+\abs{z-z'}\le 20\delta(Z)+5r(Z')\le 21\delta(Z'),
\]
which implies that $y\in B(z',21\delta(Z'))$. Since $y\in H$ is arbitrary, we obtain that 
\[
H\subset\bigcup_{Z=(z,t)\in I}B(z,21\delta(Z)).
\]
Since $\Gamma$ is $d$-Ahlfors regular, 
\[
\H^d(H)\le C\sum_{Z\in I}\delta(Z)^d\le C\sum_{Z\in I}\H^d(\Gamma\cap B_Z).
\]
Recall that the 
$B_Z$ are disjoint and that $B_Z\subset B(x_0,2Kr_0)\sm G$, so 
\[
\sum_{Z\in I}\H^d(\Gamma\cap B_Z)\le C\H^d(\Gamma\cap B(x_0,2Kr_0)\sm G) 
\le CK^{d}\eps^{1/2}r_0^d, 
\]
where the last inequality follows from \eqref{eq GammaG sm}.
Therefore, 
\[
\H^d(F)\le\int_{H}\sqrt{1+\abs{\nabla A}^2}dy\le C(1+2\eps^{1/d})K^{d} 
\eps^{1/2}r_0^d\le CK^{d}
\eps^{1/2}r_0^d,
\]
as desired.
\end{proof}
We now consider the $\wt L$-ellipitc measure of $U$. We claim that for some $C>1$ depending on $d$ and $\mu_0$,
\begin{equation}\label{eq omUF}
    \omega_U^Z(F)\ge \frac{1}{C} \quad \text{for all }Z\in\mZ.
\end{equation}
\begin{proof}[Proof of \eqref{eq omUF}]
Let $Z=(z,t)\in\mZ$ be given. We first show that
\begin{equation}\label{5d4}
    (y, A(y))\in F \quad \text{for any $y\in\rd$ that satisfies $\abs{y-z}\le 10\delta(Z)$}.
\end{equation}
Let $Z'=(z', A(z'))\in G$ be such that $\abs{Z-Z'}=\delta(Z)$. Then 
\[
\abs{z-z'}+\abs{t-A(z')}\le\sqrt{2\abs{Z-Z'}^2}<2\delta(Z).
\]
For any $y\in\rd$ with $\abs{y-z}\le 10\delta(Z)$, we have that 
\begin{multline*}
    \abs{(y,A(y))-Z}\le\abs{y-z}+\abs{A(y)-t}\\
    \le\abs{y-z}+\abs{A(y)-A(z)}+\abs{A(z)-A(z')}+\abs{A(z')-t}\\
    \le \br{1+\eps^{\frac{1}{2d}}}\abs{y-z}+\eps^{\frac{1}{2d}}\abs{z-z'}+\abs{A(z')-t}\\
    \le\br{12+3\eps^{\frac{1}{2d}}}\delta(Z)\le 20\delta(Z),
\end{multline*}
which proves \eqref{5d4}. 

By \eqref{5d4}, $Z'\in F$ and $B(Z',5\delta(Z))\cap G\subset F$. Therefore, 
\[
\omega_U^Z(F)\ge\omega^Z_U(B(Z',5\delta(Z))\cap G)\ge\frac{1}{C},
\]
where in the last inequality we have used Bourgain's estimate (Lemma \ref{lem Bourgain}).
\end{proof}
With \eqref{eq omUF}, we are ready to show that 
\begin{equation}\label{eq UFgeZ}
    \omega_U^{X_0}(F)\ge \frac{1}{C}\omega(\mZ).
\end{equation}
\begin{proof}[Proof of \eqref{eq UFgeZ}]
Observe that $\mZ\subset\Gamma$ is open in $\Gamma$, and that $F\subset G$ is open in $G$. 
Let $F_n\subset F$ be a sequence of closed sets such that $F_n\uparrow F$, and let $\mZ_m\subset\mZ$ be a sequence of closed sets such that $\mZ_m\uparrow \mZ$. Let $f_n\in C_c^\infty(F)$ and $\1_{F_n}\le f_n\le \1_{F}$. Let $g_m\subset C_c^\infty(\mZ)$ and $\1_{\mZ_m}\le g_m\le \1_{\mZ}$. For $X\in U$, set $v_n(X)=\int_Gf_n(y)d\omega_U^X$. Then by the definition of elliptic measures, $\wt L v_n=0$ in $U$ and $v_n=f_n$ on $G$. Also, 
\begin{equation}\label{5d5}
\omega_U^X(F)\ge v_n(X)\ge\omega_U^X(F_n).     
\end{equation}
For $X\in \om\cap B$, set 
\[
u_m^{(n)}(X)=\int_{\bdy(\om\cap B)}g_m(y)v_n(y)d\omega_{\om\cap B}^X(y).
\]
Then $Lu_m^{(n)}=0$ in $\om\cap B$ and $u_m^{(n)}=g_m\,v_n$ on $\bdy(\om\cap B)$. Since $\wt L=L$ in $\om\cap B$, the maximum principle implies that 
\[
v_n(X)\ge u_m^{(n)}(X) \quad\text{for any }X\in\om\cap B, \text{ for any }m,n\in\NN.
\]
Then by $g_m\ge\1_{\mZ_m}$ and \eqref{5d5},
\[
    \omega_U^X(F)\ge\int_{\mZ_m}v_n(y)\,d\omega_{\om\cap B}^X(y)\ge \int_{\mZ_m}\omega_U^y(F_n)\,d\omega_{\om\cap B}^X(y).
\]
Letting $n\to\infty$, by the regularity of $\omega_U$ and \eqref{eq UFgeZ}, we get that
\[
\omega_U^X(F)\ge\int_{\mZ_m}\omega_U^y(F)\,d\omega_{\om\cap B}^X(y)\ge \frac{1}{C}\omega_{\om\cap B}^X(\mZ_m).
\]
Now letting $m\to\infty$, the regularity of $\omega_{\om\cap B}$ gives that 
\[
\omega_U^X(F)\ge \frac{1}{C}\omega_{\om\cap B}^X(\mZ) \quad\text{for any }X\in\om\cap B.
\]
This proves \eqref{eq omUF} since $X_0\in\om\cap B$.
\end{proof}
\ms
We are now ready to prove \eqref{eq goal}.  By \eqref{5d3}, 
\begin{equation}\label{5d6}
    \frac{\omega_\om^{X_0}(E)}{\omega_\om^{X_0}(\Delta_0)}
    \ge \frac{1-\frac{C}{K^\alpha}}{1+\frac{C}{K^\alpha}}
    \,\, \frac{\omega(E)}{\omega(\Delta_0)}\ge\frac{\omega(E)}{\omega(\Delta_0)}-C\epsilon^{\alpha\beta},
\end{equation}
where, according to \eqref{KK}, 
we have chosen $K=\epsilon^{-\beta}$ for some $\beta>0$ to be determined later.
We now compute a lower bound for  
$\frac{\omega(E)}{\omega(\Delta_0)}$. Since $E$ and $\Delta_0$ might not be contained in $G$, we write
\[
\frac{\omega(E)}{\omega(\Delta_0)}\ge \frac{\omega(E\cap G)}{\omega(\Delta_0\cap G)+\omega(\Delta_0\sm G)}.
\]
By \eqref{eq ZH}, 
\[
\omega(E\cap G)\ge\omega'(E\cap G)-\omega(\mZ).
\]
Since $\om\cap B\subset U\cap B$ and $L=\wt L$ on $\om\cap B$, the maximum principle 
implies that $\omega(\Delta_0\cap G)\le \omega'(\Delta_0\cap G)$. 
Also, from the definition of $\mZ$, it follows that $\omega(\Delta_0\sm G)\le\omega(\mZ)$. So
\begin{equation}\label{5d7}
    \frac{\omega(E)}{\omega(\Delta_0)}\ge\frac{\omega'(E\cap G)-\omega(\mZ)}{\omega'(\Delta_0\cap G)+\omega(\mZ)}
    \ge \frac{\omega'(E\cap G)}{\omega'(\Delta_0\cap G)}-\frac{2\omega(\mZ)}
    {\omega'(\Delta_0\cap G)}=: I_1-2I_2.
\end{equation}
  Let us first show that $I_2$ can be arbitrary small. By \eqref{eq UFgeZ} and \eqref{5d1}, 
  \begin{equation}\label{5d7.1}
      I_2\le C \,\frac{\omega_U^{X_0}(F)}{\omega_U^{X_0}(\Delta_0\cap G)}.
  \end{equation}
  By Section~\ref{SLip}, the $\wt L$-elliptic measure of $U$ is in $A_\infty(d\H^d|_{G})$ (we even have $A_\infty$ with small constant). Therefore, 
  \[
  \omega^{X_0}_U(F)\le C\omega_U^{X_0}(B\cap G)\br{\frac{\H^d(F)}{\H^d(B\cap G)}}^\theta,
  \]
  and 
  \[
  \omega^{X_0}_U(\Delta_0\cap G)\ge \frac{1}{C}\omega_U^{X_0}(B(x_0,r_0)\cap G)\br{\frac{\H^d(\Delta_0\cap G)}{\H^d(B(x_0,r_0)\cap G)}}^\eta,  \]
  where $\theta,\eta>0$ are $A_\infty$ constants, which are independent of the sets. Again by $A_\infty$, we have that 
  \[
  \frac{\omega_U^{X_0}(B\cap G)}{\omega_U^{X_0}(B(x_0,r_0)\cap G)}\le C\br{\frac{\H^d(B(x_0,r_0)\cap G)}{\H^d(B\cap G)}}^{-\eta}.
  \]
  Substituting these estimates in \eqref{5d7.1}, we get that 
  \begin{equation}\label{5d8}
      I_2\le C\br{\frac{\H^d(F)}{\H^d(B\cap G)}}^\theta\Big/\br{\frac{\H^d(\Delta_0\cap G)}{\H^d(B\cap G)}}^\eta
  \end{equation}
  Observe that 
  \[
  \H^d(\Delta_0\cap G)=\H^d(\Gamma\cap B(x_0,r_0)\cap G)\ge\H^d(\Gamma\cap B(x_0,r_0))-\H^d(\mZ).
  \]
Thus, if we choose 
$K=K(\eps)$ according to \eqref{KK}
and $\eps$ so small that
  \begin{equation}\label{Kreq1}
      CK^{d}\eps^{1/2}<\frac{(1-C\epsilon^{1/d})c_d}{2}, 
  \end{equation}
  then by \eqref{5b2bis} and \eqref{Hd(Z)}, 
  \begin{equation}\label{5d9}
      \H^d(\Delta_0\cap G)\ge \frac{(1-C\epsilon^{1/d})c_d}{2} \,\, r_0^d\ge C^{-1}       r_0^d.  
  \end{equation}
  We now check that 
  \begin{equation}\label{H(B.G)}
         \frac{c_d}{2}K^dr_0^d  \le \H^d(B\cap G)\le c_d(1+C \eps^{\frac{1}{2d}})K^dr_0^d. 
     \end{equation}
 The second inequality follows directly from the fact that $G$ is an $\eps^{\frac{1}{2d}}$-Lipschitz graph. 
 To see the first inequality, 
  we want to use a ball centered on $\Gamma$. By \eqref{distGGamma}, there exists a point $x^*\in G$ 
 such that $\delta(x_0)=\abs{x_0-x^*}\le C\eps^{\frac{1}{2d}} K r_0$. 
 Since $B(x^*, K r_0 - \delta(x_0)) \subset B$, we get that 
  \[
  \H^d(B\cap G) \geq \H^d(B(x^*, K r_0 - \delta(x_0)))
\geq (1-C\varepsilon^{\frac{1}{2d}})( K r_0 - \delta(x_0))^{d} \geq \frac{c_d}{2}K^dr_0^d
  \]
because $G$ is an $\eps^{\frac{1}{2d}}$-Lipschitz graph and if $\varepsilon$ is small enough;
\eqref{H(B.G)} follows.
 Using \eqref{eq Fsm}, \eqref{5d9} and \eqref{H(B.G)} in \eqref{5d8}, we obtain that
 \begin{equation}\label{5dI2}
      I_2\le CK^{d\eta}\eps^{\frac{\theta}{2}}.
 \end{equation}
We return to $I_1$ in \eqref{5d7}. By \eqref{5d3}, 
\begin{equation}\label{5d7.2}
    I_1\ge \frac{1-\frac{C}{K^\alpha}}{1+\frac{C}{K^\alpha}}\, \, 
    \frac{\omega^{X_0}_U(E\cap G)}{\omega_U^{X_0}(\Delta_0\cap G)}.
\end{equation}
 We are in a position to apply the $A_\infty(\sigma,\delta)$ result for small Lipschitz graph, which says that 
 for $\eps=\eps(d,\mu_0,\delta)$ sufficiently small and 
 \begin{equation}\label{Kreq2}
     K\ge\frac{M}{\tau(d,\delta,\mu_0)},
 \end{equation}
 where $\tau=\tau(d,\delta,\mu_0)$ is as in Theorem \ref{t1b2} for small Lipschitz graphs, 
 \begin{equation}\label{5d10}
     \frac{\omega^{X_0}_U(E\cap G)}{\omega_U^{X_0}(\Delta_0\cap G)}
     \ge\frac{\H^d(E\cap G)}{\H^d(\Delta_0\cap G)}-\frac{\delta}{8}.
 \end{equation}
Notice that we do 
not have a zero denominator on the right-hand side of \eqref{5d10},
and even $\H^d(\Delta_0\cap G)\ge c_dr_0^d$,
because $B(x^*,r_0/2)\subset B(x_0,r_0)$ for the same $x^*$ as above, as soon as we choose
$\beta < \frac{1}{4d}$ in \eqref{KK} and 
  $\eps$ so small that 
\begin{equation}\label{Kreq2.5}
    CK \eps^{\frac{1}{2d}}\le 1/2. 
\end{equation}
Since we want $\frac{\H^d(E)}{\H^d(\Delta_0)}$ in \eqref{eq goal}, we need to compare 
$\frac{\H^d(E\cap G)}{\H^d(\Delta_0\cap G)}$ and $\frac{\H^d(E)}{\H^d(\Delta_0)}$. 
We use \eqref{Hd(Z)} and the Ahlfors
regularity of $\Gamma$ to get that 
\[
    \frac{\H^d(E\cap G)}{\H^d(\Delta_0\cap G)}\ge \frac{\H^d(E)-\H^d(\mZ)}{\H^d(\Delta_0)}
    \ge\frac{\H^d(E)}{\H^d(\Delta_0)}-\frac{CK^{d}\eps^{1/2}}{(1+C\eps)c_d} 
    \ge \frac{\H^d(E)}{\H^d(\Delta_0)}-\frac{\delta}{8},
\]
where we have chosen $\beta < \frac{1}{4d}$ in \eqref{KK} 
and $\eps=\eps(d,\delta,\mu_0)$ 
so small that
 \begin{equation}\label{Kreq3}
 \frac{CK^{d}\eps^{1/2}}{(1+C\eps)c_d}\le\frac{\delta}{8} 
\end{equation}
in the last inequality. Returning to \eqref{5d7.2}, we obtain that 
for $\varepsilon$ small enough,
\begin{equation}\label{5dI1}
    I_1
    \ge \frac{1-\frac{C}{K^\alpha}}{1+\frac{C}{K^\alpha}}\br{\frac{\H^d(E)}{\H^d(\Delta_0)}
          -\frac{\delta}{4}}
         \ge \frac{\H^d(E)}{\H^d(\Delta_0)}-\frac{\delta}{4}- C\eps^{\alpha\beta}\ge \frac{\H^d(E)}{\H^d(\Delta_0)}-\frac{\delta}{2}.  , 
\end{equation}

Altogether, by \eqref{5dI1}, \eqref{5dI2}, \eqref{5d7} and \eqref{5d6}, we get that
\[
\frac{\omega_\om^{X_0}(E)}{\omega_{\om}^{X_0}(\Delta_0)}\ge 
\frac{\H^d(E)}{\H^d(\Delta_0)}-\frac{\delta}{2}-CK^{d\eta} 
\eps^{\frac{\theta}{2}}-C\epsilon^{\alpha\beta}\ge \frac{\H^d(E)}{\H^d(\Delta_0)}-\delta
\]
if we choose $\beta$ so that $d\eta \beta < \frac{\theta}{2}$ and $\varepsilon$ so small that
\begin{equation}\label{Kreq5}
    CK^{d\eta}\eps^{\frac{\theta}{2}}+C\epsilon^{\alpha\beta}\le\delta/2
\end{equation}
in the last inequality.

Finally, we check that all the conditions (\eqref{Kreq1},\eqref{Kreq2},\eqref{Kreq2.5},\eqref{Kreq3},
\eqref{Kreq5}) on $K$ and $\eps$ can be satisfied if we choose 
$0<\beta<\min\set{\frac{1}{2d},\frac{\theta}{2d\eta}}$ 
and $\eps=\eps(d,\mu_0,\delta)$ sufficiently small. This completes the proof of \eqref{eq goal}.
\qed

\subsection{We extend to $U$ the matrix of coefficients}\label{SExt}

In this subsection, 
we extend the matrix $A$ of coefficients 
of $L=-\divg(A\nabla)$ to $U$ so that the extension 
satisfies the weak DKP condition on $G\times(0,\infty)$, with a constant
smaller than $C\cN(A) \leq C \varepsilon$. 

We shall use the setup in Section~\ref{SCASSCmain}, that is, given $x_0\in\Gamma$ and $r_0>0$, 
we have an $\epsilon^{\frac{1}{2d}}$-Lipschitz graph $G$ such that 
\eqref{eq Om<U} and \eqref{eq GammaG sm} hold. 

We have seen in Section~\ref{SCASSCmain} that if we can find a $\mu_0$-elliptic 
operator $\wt L = -\divg(\wt A\nabla)$ on $U$, that satisfies the weak DKP condition in $U$ 
with constant  $\cN(\wt A)\le C\epsilon$, and such that 
\begin{equation}\label{adda2}
\wt A = A \text{ on } \Omega\cap B(x_0,2Kr_0)\subset U,
\end{equation}
as in \eqref{adda}, then we can prove of Theorem \ref{thmCAS}. 
We also required that \eqref{eq Om<U} and \eqref{eq GammaG sm} hold in the larger ball $B(x_0, 10K r_0)$, 
and we shall use the extra space to build the desired extension $\wt A$ with a small enough norm $\cN(\wt A)$. 
The construction will be similar to the usual proof of the Whitney extension theorem with partitions of unity.

Set $B=B(x_0,Kr_0)$ as before. Because of \eqref{adda2}, we want to keep 
$\wt A = A$ on the closure of $\om\cap 2B$, and we are free to define $\wt A$ as we please on 
$V =\ree\sm \overline{\om\cap 2B}$.
So we cover $V$ by Whitney cubes, which we construct as in \cite{stein1970singular}, Chapter VI.
That is, let $\{ Q_i \}$, $i\in I$, denote the the collection of maximal dyadic cubes $Q_i \subset V$ such that, say,
\begin{equation} \label{w63}
\diam(Q_i) \leq 10^{-1}\dist(Q_i, \om\cap 2B).
\end{equation}
The $Q_i$, $i\in I$, cover $V$, and they have overlap properties that we will recall when we need them. We can also construct a partition of unity 
$\set{\chi_i}_{i\in I}$ on $V$, adapted to $\set{Q_i}_{i\in I}$, such that 
$\sum_i\chi_i=\1_{V}$ and for each $i$, 
$\chi_i\in C_c^\infty(\frac65 Q_i)$, $0\le \chi_i\le 1$, and
$\abs{\nabla\chi_i}\le C\diam(Q_i)^{-1}$. See for instance \cite{stein1970singular} for details.
As usual in these instances, we shall keep $\wt A = A$ on $\overline{\om\cap 2B}$ (as suggested above)
and set 
\begin{equation} \label{w64} 
\wt A(X) = \sum_{i\in I} A_i\, \chi_i(X) \ \text{ for }  X \in V,
\end{equation}
where we take 
\begin{equation} \label{w65}
A_i = \fint_{W_i} A(X) dX,
\end{equation}
 where the average is componentwise, for some set $W_i \subset \Omega$ (so that $A$ is defined on $W_i$), that we shall now choose carefully. 
Notice that $A_i$ and $\wt A(x)$ are $\mu_0$-elliptic matrices, because they are averages of $\mu_0$-elliptic matrices.
Set $d_i = \dist(Q_i, \om\cap 2B)$. We start with the case when
\begin{equation} \label{w66}
\dist(Q_i, \Gamma) \leq d_i \leq Kr_0
\end{equation}
(and then we say that $i \in I_1$). Then we pick $\xi_i \in \Gamma$ such that
$\dist(\xi_i,Q_i) = \dist(Q_i, \Gamma) \leq d_i$, 
denote by $W_i$ the Whitney cube $W(\xi_i,d_i) \subset \Omega$ associated to $\Omega$ as in \eqref{1b2}.
When 
\begin{equation} \label{w67}
d_i < \dist(Q_i, \Gamma) \text{ and } d_i \leq Kr_0,
\end{equation}
we say that $i \in I_2$ and we choose $W_i = Q_i$. Notice that $Q_i$ does not meet $\Gamma$, so $Q_i$ is either contained in $\Omega$ or in $\R^{d+1} \sm \Omega$.
The second case is impossible because $\dist(Q_i,\Omega \cap B) \leq d_i<\dist(Q_i,\Gamma)$,
so $W_i \subset \Omega$
as needed. We are left with the case when 
\begin{equation} \label{w68}
d_i > Kr_0.
\end{equation}
Then we say that $i \in I_3$, and we decide to take $W_i = W(x_0, Kr_0)$, which is contained in $\Omega \cap B$
by definition. This will conveniently kill the variations of $\wt A$ far from $B$.

Notice that $W_i\subset\Omega\cap 5B$ for all $i \in I$. This is obvious by definition when $i \in I_3$ and because 
$d_i \le K r_0$ otherwise, so that $Q_i \subset 3 B$, which proves the case when $i\in I_2$. When $i \in I_1$, $\xi_i \in \frac72 B$, so $W_i=W(\xi_i,d_i)\subset{\frac92 B}$.

It will also be good to know that
\begin{equation} \label{w69}
\dist(X, Q_i) \leq 3 d_i = 3\dist(Q_i, \om\cap 2B)
\ \text{ for $X \in W_i$}, \quad i\in I.
\end{equation}
When $i \in I_1$, this is clear because $\dist(\xi_i,Q_i) \leq d_i$. 
When $i \in I_2$ this is trivial. Finally, when
$i \in I_3$, set $D = \dist(Q_i, x_0)$, observe that $W_i = W(x_0, Kr_0) \subset B$ lies in a 
$D+Kr_0$-neighborhood of $Q_i$, while $D \leq \dist(Q_i, \om\cap 2B) + 2K r_0 = d_i + 2Kr_0
\leq 3d_i$ by definition of $I_3$.

So we have a function $\wt A$ defined on the whole $\reu$, and our next task is to evaluate the numbers 
$\overline\alpha(y,r)$ associated to (the restriction to $U$ of) $\wt A$, defined for $y \in G$ and $r > 0$.
Recall that they are defined by 
\begin{equation} \label{bad}
\overline\alpha(y,r) = \inf_{A_0} \Big\{\fint_{\overline W(y,r)} |\wt A-A_0|^2 \Big\}^{1/2},
\end{equation}
where the infimum is taken over constant $\mu_0$-elliptic matrices $A_0$, and we use the Whitney boxes 
\begin{equation} \label{bac}
\overline W(y,r) = \big\{ X \in U \cap B(y,r)\, ; \, \dist(X, G) \geq  r/2\big\}.
\end{equation}

We claim that 
\begin{equation} \label{w82}
\overline\alpha(y,r) = 0 \ \text{ when } r > 6 K r_0,
\end{equation}
and 
\begin{equation}\label{w82'}
    \overline\alpha(y,r) = 0 \ \text{ when } \dist(y,\Gamma\cap 2B)>2r \text{ and }\dist(y,\Gamma\cap 2B)>6Kr_0.
\end{equation}
To see this, let us first prove that 
\begin{equation} \label{w84}
\wt A(X) = A_{00} : =\fint_{W(x_0, K r_0)} A(Y) dY  
\ \text{ for } X \in \R^{d+1} \sm 3B.
\end{equation}
Indeed for any Whitney cube $Q_i$, $i\in I_1 \cup I_2$, 
we have that $\dist(Q_i, \Omega \cap 2B) = d_i \leq K r_0$,
and then $\frac65 Q_i \subset 3B$ because $\diam(Q_i) \leq d_i/10$. 
Now if $X \in \R^{d+1} \sm 3B$, all the indices $i$ such that $\chi_i(X) \neq 0$
lie in $I_3$, and since $A_i = A_{00}$ for $i \in I_3$, \eqref{w64} yields $\wt A(X) = A_{00}$.

If $r > 6 K_0$, we have that $\dist(X,G) > r/2 \geq 3K r_0$ for $X \in \overline W(y,r)$, but
then $\dist(X,x_0) \geq \dist(X,G) > 3K r_0$ and $\wt A(X) = A_{00}$. That is, 
$\wt A$ is constant on $\overline W(y,r)$, and \eqref{w82} follows. If $\dist(y,\Gamma\cap 2B)>6Kr_0$ and $r<\dist(y,\Gamma\cap 2B)/2$, then for any $Z\in \overline W(y,r)$, 
\(
|Z-x_0|\ge\abs{y-x_0}-\abs{Z-y}\ge \dist(y,\Gamma\cap 2B)/2>3Kr_0,
\)
which shows that $\overline W(y,r)\subset\ree\sm 3B$. Then \eqref{w82'} follows from \eqref{w84}.

Let $y \in G$ and $r > 0$ be given.
We shall distinguish between cases, and the most interesting is probably when 
\begin{equation} \label{w72}
\dist(y,\Gamma \cap 2B) \leq 2r \text{ and }r\le 6Kr_0. 
\end{equation}
We claim that in this case
\begin{equation} \label{bae}
\overline\alpha(y,r) \leq C \gamma_A(x, 80 r) 
\text{ for every } x\in \Gamma \cap B(y, 4 r),
\end{equation}
where $\gamma_A(x,r)$ is the variant of $\alpha_A(x,r)$, but defined with balls. That is, 
\begin{equation} \label{baf}
\gamma_A(x,r) = \inf_{A_0} \Big\{\fint_{\Omega \cap B(x,r)} |A-A_0|^2\Big\}^{1/2}.
\end{equation}
Let $x \in \Gamma \cap B(y, 4 r)$ be as in the claim. We will use the same constant matrix 
$A_0$ as in the definition of $\gamma_A(x,80r)$ to evaluate $\overline\alpha(y,r)$. First write
\[
\overline{\alpha}(y,r)^2
\le \abs{\overline W(y,r)}^{-1}\br{\int_{\overline{W}(y,r)\cap \overline{\om \cap 2B}}\abs{A-A_0}^2dX
+\int_{\overline{W}(y,r)\cap V}\abs{\wt A-A_0}^2dX},
\]
and notice that the first part is in order, because $\Omega \cap \overline W(y,r) \subset 
\Omega \cap B(y,r) \subset \Omega \cap B(x,80r)$ and the denominators $\abs{\overline W(y,r)}$ and $\abs{B(x,80r)}$
are comparable. We are left with the integral $\displaystyle J =\int_{\overline W(y,r) \cap V} |\wt A-A_0|^2$.
Let $I(y,r)$ be the collection of indices such that $Q_i$ meets $\overline{W}(y,r)\cap V$; 
then $J \leq \sum_{i \in I(y,r)} \big|\wt A-A_0\big|^2dX$, and the definition \eqref{w64} of $\wt A$ yields
\[
 J \le\sum_{i \in I(y,r)}\int_{Q_i}
 \Big|\sum_j A_j \,\chi_j(X)-A_0\Big|^2dX
=\sum_{i \in I(y,r)}\int_{Q_i}
 \Big|\sum_j (A_j-A_0) \,\chi_j(X)\Big|^2dX
\]
because $\sum_j \chi_j(X) = 1$.
Observe that for $X\in Q_i$, we only sum over the set $J_i$ of those $j \in I$ such that $\frac65 Q_j$ contains $X$ and hence meets
$Q_i$. There are only finitely many of such indices $j$, and they are all such that 
\(
\frac{1}{4}\diam(Q_j)\le\diam(Q_i)\le 4\diam(Q_j)\). 
We use Minkowski's inequality to get that 
\[
\Big(\int_{Q_i}\Big|\sum_j(A_{j}-A_0)\chi_j(X)\Big|^2dX \Big)^{1/2}
\le\sum_{j\in J_i}\br{\int_{Q_i}\abs{(A_{j}-A_0)\chi_j(X)}^2dX}^{1/2}.
\]
Since $\chi_j\le 1$ and by definition of $A_{j}$, the right-hand side is at most 
\[
    \sum_{j\in J_i}\abs{Q_i}^{1/2}\abs{A_{j}-A_0}
    \le C\sum_{j\in J_i}\bigg(\frac{|Q_j|}{|W_j|}\int_{W_j}\abs{A(Z)-A_0}^2dZ \bigg)^{1/2},
\]
where in the last inequality we have used the fact that $|Q_j| \approx |Q_i|$
when $\frac65 Q_j$ meets $Q_i$. By construction $|W_j| \approx |Q_j|$ for $j\in I_1\cup I_2$. 
Recall from \eqref{w72} that $\dist(y, \Gamma\cap 2B) \leq 2r\le 12 Kr_0$,  
hence for $j\in I_i$ where $i\in I_3\cap I(y,r)$, $\diam(Q_j) \approx C K r_0$. Therefore $|W_j| \approx |Q_j|$ holds for all $j\in I_i$, $i\in I(y,r)$, and so we may drop $\frac{|Q_j|}{|W_j|}$. Altogether 
\begin{multline}\label{5e3}
  J \le C\sum_{i\in I(y,r)}\bigg\{\sum_{j\in J_i}\Big(\int_{W_j}\abs{A(Z)-A_0}^2dZ\Big)^{1/2}\bigg\}^2\\
    \le C\sum_{i\in I(y,r)}\sum_{j\in J_i}\int_{W_j}\abs{A(Z)-A_0}^2dZ,
\end{multline}
where the last inequality follows from Jensen's inequality. We claim that 
\begin{equation}\label{5e4}
    W_j\subset B(x,80r) \  \text{ for 
    $j\in J_i$}.
\end{equation}
To see this, let $Z \in W_j$ be given; by \eqref{w69}, $\dist(Z,Q_j) \leq 3d_j$.
But $d_j = \dist(Q_j, \Omega \cap 2B) \leq 20 \diam(Q_j)$ by definition of our 
Whitney cubes, so $\dist(Z,Q_j) \leq 60\diam(Q_j)$. Next $\frac65 Q_j$ meets $Q_i$,
so $\dist(Z,Q_i) \leq 62\diam(Q_j) \leq 244 \diam(Q_i)$. Pick $X \in Q_i \cap \overline{W}(y,r)$;
it exists because $i \in I(y,r)$; then $|X-x| \leq |X-y|+|y-x| \leq 5r$, and so
$|Z-x| \leq 244 \diam(Q_i) + 5r$. But the definition of our cubes yields
$\diam(Q_i) \leq 10^{-1} \dist(Q_i, \Omega \cap 2B)$, and 
$\dist(Q_i, \Omega \cap 2B) \leq \dist(Q_i, \Gamma \cap 2B)\le 3r$ by \eqref{w72}. 
So $244 \diam(Q_i) \leq 244 \frac{3r}{10} < 75r$, and \eqref{5e4} follows.

Return to \eqref{5e3}; by \eqref{5e4} and the fact that the $W_j$ have finite overlap, 
\[
J\le C\int_{B(x,80r)\cap\om}\abs{A(Z)-A_0}^2dZ\le C r^{d+1}\gamma_A(x,80r)^2,
\]
which completes the proof of our claim \eqref{bae}. 

Let us now use the claim to do the part of our Carleson measure estimate that
comes from \[\Xi = \big\{ (y,r) \in G \times (0,+\infty)\, ; \, \dist(y,\Gamma \cap 2B) \leq 2r \big\}.\] 
We need to know that the set $\Gamma \cap B(y,r)$ of the claim is not too small.
We use \eqref{w72} to pick $y^\ast \in \Gamma \cap 2B$ such that 
$|y-y^\ast| \leq \dist(y,\Gamma \cap 2B) \leq 2r$; then by \eqref{5b2bis} for balls centered at $y^\ast$
and since $B(y^\ast,2r) \subset B(y,4r) \subset B(y^\ast,6r)$,
\begin{equation}\label{5e2}
   c_d r^d \leq  (1-C\eps^{1/d})c_d (2r)^d  \leq \H^d(B(y,4r)\cap\Gamma) 
    \le (1+C\eps^{1/d}) c_d (6r)^d \leq 7^d c_d r^d.
\end{equation}

Let $y_0 \in G$ and $s_0 > 0$ be given, and estimate
\begin{equation*}
J(\Xi) = \int_{y\in G \cap B(y_0,s_0)} \int_{0 < r \leq s_0} \, \1_{\Xi}(y,r)
\,\,\overline\alpha(y,r)^2
\frac{dr}{r}d\H^d(y).
\end{equation*}
Denote $\min\set{s_0, 6Kr_0}$ by $\widehat{s_0}$.  We use \eqref{w82}, \eqref{bae}, \eqref{5e2},
 the Ahlfors regularity of $\Gamma$ and $G$, and a Fubini argument, to get that
\begin{multline} \label{bag}
J(\Xi)
\leq C \int_{y\in G\cap B(y_0,s_0)}\int_{0 < r \leq\widehat{s_0}} 
\Big\{r^{-d}\int_{x \in \Gamma \cap B(y, 4 r)} \gamma_A(x,80 r)^2 d\H^d(x) \Big\}
\frac{dr}{r}d\H^d(y) 
\\ \le C \int_{x \in \Gamma \cap B(y_0, 5 s_0)} \int_{0 < r \leq\widehat{s_0}}
\gamma_A(x,80 r)^2 \frac{dr}{r}d\H^d(x)\\=  C \int_{x \in \Gamma \cap B(y_0, 5 s_0)} \int_{0 < r \leq 80s_0}
\gamma_A(x,r)^2 \frac{dr}{r}d\H^d(x).
\end{multline}
In \cite{david2021carleson}, we proved that the weak DKP condition implies the
stronger estimate that $\gamma(x,r)^2 \frac{dxdr}{r}$ is a Carleson measure in $\rd\times(0,\infty)$. 
The same thing holds here, with almost the same proof, even though the geometry is a bit different because 
we work with $\Gamma$ and $\Omega$. That is, for any $x_0\in\Gamma$ and $r_0>0$, we have that 
\[
    \norm{\gamma_A(x,r)^2\frac{d\H^d(x)dr}{r}}_{\C(x_0, r_0)}\le C\norm{\alpha_A(x,r)^2\frac{d\H^d(x)dr}{r}}_{\C(x_0,r_0)}.
\]
Therefore, 
\eqref{bag} now implies that
\begin{equation} \label{w80}
J(\Xi) \leq C s_0^d \cN(A) = C s_0^d \varepsilon
\end{equation}
and concludes this part of the argument.

Return to our main Carleson estimate. 
We are left with the pairs $(y,r)$ 
that belong to the set
\begin{equation} \label{w85}
\Xi' = \big\{ (y, r) \in G \times (0,+\infty) \, ; \, \dist(y,\Gamma \cap 2B) > 2r \big\}. 
\end{equation}
Let $(y,r) \in \Xi'$ be given, and assume additionally that $\dist(y,\Gamma \cap 2B)\le 6Kr_0$. Set $d(y)=\dist(y,\Gamma \cap 2B)$, then $2r<d(y)\le 6Kr_0$.
Obviously 
\begin{equation} \label{w86}
\dist(\overline W(y,r), \Gamma \cap 2B) \geq \dist(B(y,r), \Gamma \cap 2B) > d(y)- r \geq d(y)/2.
\end{equation}

Let $Q_i$ be any of the Whitney cubes such that $\frac65 Q_i$ meets $\overline W(y,r)$, and pick 
$X \in \overline W(y,r) \cap \frac65 Q_i$. Then by \eqref{w86}
$d(y)/2 \leq \dist(X,\Gamma \cap 2B) \leq \dist(Q_i,\Gamma \cap 2B) + \frac65 \diam(Q_i) \leq 12 \diam(Q_i)$,
so $\diam(Q_i) \geq d(y)/24$. Next pick any $X \in \overline W(y,r)$ and any constant matrix $A_0$, and observe that
by \eqref{w64} 
\begin{equation} \label{w87} 
\nabla \wt A(X) = \sum_{i\in I} A_i\, \nabla \chi_i(X) = \sum_{i\in I} [A_i - A_0] \, \nabla \chi_i(X)
\end{equation}
because $\sum \nabla \chi_i = 0$. We sum only over the set $I(X)$ of indices $i$ such that $X \in \frac65 Q_i$, 
and since $\abs{\nabla\chi_i(X)}\le C \diam(Q_i)^{-1} \leq C d(y)^{-1}$ for $i \in I(X)$, we get that 
\begin{equation} \label{w88} 
|\nabla \wt A(X)| \leq C d(y)^{-1} \sum_{i\in I(X)}  |A_i - A_0|.
\end{equation}
Recall that $A_i$ is the average of $A$ on $W_i$; we want to compare $A_i$ to some average
of $A$ on a large ball centered on $\Gamma$, so we choose $y^\ast \in \Gamma \cap \overline{2B}$
such that $|y^\ast - y| \leq d(y)$ and we check that 
\begin{equation} \label{w89}
W_i \subset B(y^\ast, 12 d(y)).
\end{equation}
Indeed, for $Z \in W_i$, \eqref{w69} says that
$\dist(Z,Q_i) \leq 3 \dist(Q_i,\Omega \cap 2B) \leq 60 \diam(Q_i)$, and all points of $Q_i$
lie within $2 \diam(Q_i)$ from $\overline W(y,r)$ (because $\frac65 Q_i$ meets $\overline W(y,r)$)
hence within $2 \diam(Q_i)+ r$ from $y$. So $|Z-y^\ast| \leq |Z-y| + d(y) \leq 60 \diam(Q_i) + 3d(y)/2$.

Now $\frac65 Q_i$ meets $\overline W(y,r)$, and if $X \in \overline W(y,r) \cap \frac65 Q_i$
we both have that $\dist(X,\Gamma \cap 2B) \leq d(y) + r \leq 3d(y)/2$ by \eqref{w85},
and $\dist(X,\Gamma \cap 2B) \geq \dist(\frac65 Q_i,\Gamma \cap 2B) \geq \dist(Q_i,\Gamma \cap 2B)-\frac15\diam(Q_i) \geq 9 \diam(Q_i)$, so $60 \diam(Q_i) \leq 10 d(y)$ and \eqref{w89} follows.
Therefore, choosing $A_0 = \fint_{\Omega \cap B(y^\ast, 12 d(y))}A(Y)dY$ (which does not depend on $i \in I(X)$,
and not even on $X$),
\begin{multline*} 
|A_i - A_0| \leq |W_i|^{-1} \int_{W_i} |A(Y) - A_0|dY \leq  |W_i|^{-1} \int_{B(y^\ast, 12 d(y))} |A(Y) - A_0|dY 
\\
\leq C |W_i|^{-1} |B(y^\ast, 12 d(y))| \fint_{B(y^\ast, 12 d(y))} |A(Y) - A_0|dY. 
\end{multline*}
Observe that $|W_i|^{-1} |B(y^\ast, 12 d(y))| \leq C$. For $i\in I_1\cup I_2$, this follows from $\diam(Q_i) \geq d(y)/24$. For $i\in I_3$, this is because $d(y)\le 6Kr_0$. Hence,
\begin{equation} \label{w90}
|A_i - A_0| \leq C d(y)^{-1} \gamma_A(y^\ast, 12 d(y))
\end{equation}
by Cauchy-Schwarz. 
We notice that all the $W_i$, $i \in I(Z)$, stay at distance at least $C^{-1}\diam(Q_i)
\geq C^{-1}d(y)$ from $\Gamma$, so we can replace $\gamma(y^\ast, 12 d(y))$ with 
$\wh \alpha(y^\ast,d(y))$,
the larger variant
of $\alpha(x,r)$, defined by 
\begin{equation*}
\wh \alpha(x,r) = \Big\{ \inf_{A_0} |\wh W(x,r)|^{-1} \int_{\wh W(x,r)} |A-A_0|^2 \Big\}^{1/2},
\end{equation*}
with $\wh W(x,r) = \big\{ X \in \Omega \cap B(x,20r) \, ; \, \dist(X,\Gamma) \geq C^{-1} r \big\}$.
Altogether, 
\begin{equation} \label{w91} 
|\nabla \wt A(X)| \leq C d(y)^{-1} \sum_{i\in I(X)}  |A_i - A_0| \leq C d(y)^{-1} \wh \alpha(y^\ast,d(y)).
\end{equation}
Because of this, the oscillation of $\wt A(X)$ on $\overline W(y,r)$ is at most 
$C d(y)^{-1} r \wh \alpha(y^\ast,d(y))$, and hence 
$\overline\alpha(y,r) \leq C d(y)^{-1} r \wh \alpha(y^\ast,d(y))$. By this and \eqref{w82'}, 
\begin{multline} \label{bagi}
J(\Xi') := \int_{y\in G \cap B(y_0,s_0)}\int_{0 < r \leq s_0} \, \1_{\Xi'}(y,r) 
\,\1_{d(y)\le 6Kr_0}(y)\,\, \overline\alpha(y,r)^2 \frac{dr}{r}d\H^d(y) 
\\
\leq C  \int_{y\in G \cap B(y_0,s_0)}\int_{0 < r \leq d(y)/2} 
\wh\alpha(y^\ast,d(y)) \, d(y)^{-2} r dr\,d\H^d(y)
\\
\leq C\int_{y\in G \cap B(y_0,s_0)} 
\wh \alpha(y^\ast,d(y))^2 \, d\H^d(y)\\
\le C s_0^d\sup_{y\in G\cap B(y_0,s_0)}\wh\alpha(y^\ast,d(y))^2.
\end{multline}
Now the weak DKP condition implies that $\wh\alpha(z,r)^2 \leq C \cN(A) \leq C \varepsilon$
for all $z\in \Gamma$ and $r>0$, where the Carleson norm $\cN(A)$ is as in \eqref{1a7}.
The verification is also easy (and is done in \cite{david2021carleson}).
Thus we also get that $J(\Xi') \leq C s_0^d \varepsilon$, and since \eqref{w80} provides the 
estimate for the other piece $J(\Xi')$, we finally get that the extension $\wt A$ satisfies
the weak DKP condition on $G\times (0,\infty)$, with a norm at most $C\varepsilon$.

\section{Small and vanishing $A_\infty$ and BMO}
\label{Sbmo}

In this section we check some of the relations between $A_\infty$ weights and 
$BMO$ estimates for their logarithm. We want to do this in $(\Gamma, d\sigma)$,
where either $\Gamma \subset \R^{d+1}$ is a chord-arc surface with small constant and 
$\sigma = \H^d_{\vert \Gamma}$, or $\Gamma = \R^d$ and $\sigma$ is the Lebesgue measure on  $\Gamma$. Because of the first case, we define BMO with balls.

We first define the mean oscillation on a set $E$ such  that $0 < \sigma(E) < +\infty$, of the function $f \in L^1(E,d\sigma)$ by
\begin{equation} \label{6a1}
mo(f,E) = \sigma(E)^{-1} \int_E \bigg| f(y) - \Big(\fint_{E} f d\sigma \Big)  \bigg| d\sigma(y).
\end{equation}
We say that $f \in BMO(\Gamma)$ when $f\in L^1_{loc}(\Gamma,d\sigma)$ and
\begin{equation} \label{6a2}
\|f\|_{BMO} := \sup_{r > 0} \sup_{x \in \Gamma} \, mo(f,\Delta(x,r)) < +\infty,
\end{equation}
and that $f \in VMO(\Gamma)$ when $f \in BMO(\Gamma)$ and in addition 
\begin{equation} \label{6a3}
\lim_{r \to 0_+}  \sup_{x \in \Gamma} \, mo(f,\Delta(x,r)) = 0.
\end{equation}
It is easy to see that in the case of $\R^d$, we can replace the collection of balls $\Delta(x,r)$
with the collection of cubes $Q$ with faces parallel to the axes, and get the same space
BMO, with a slightly different but equivalent semi-norm
\begin{equation} \label{6a4}
\|f\|_{\ast} := \sup_{Q}  \, mo(f,Q);
\end{equation}
also $VMO(\Gamma)$ could be defined with cubes. Finally, we can localize: if
$\Delta = \Delta(x_0,r_0)$ is a surface ball in $\Gamma$, we say that 
$f \in BMO(\Delta)$ when $f \in L^1(\Delta, d\sigma)$ and 
\begin{equation} \label{6a5}
\|f\|_{BMO(\Delta)} := \sup_{(x,r) \in \Delta \times (0,r_0), \Delta(x,r) \subset \Delta} 
\, mo(f,\Delta(x,r)) < +\infty.
\end{equation}
This may be a little ugly near the boundary, but often we only care about the restriction
of $f$ to $\Delta(x_0,r_0/2)$ anyway. In $\R^d$, the version with cubes is a little better.
If $Q_0 \subset \R^d$ is a cube (with faces parallel to the axes, we won't repeat), 
we say that $f \in BMO_\ast(Q_0)$ when $f \in L^1(Q_0, d\sigma)$ and
\begin{equation} \label{6a6}
\|f\|_{BMO_\ast(Q_0)} := \sup_{Q \subset Q_0} \, mo(f,Q) < +\infty,
\end{equation}
where the supremum is over the cubes $Q \subset Q_0$.

With all these definitions at hand, we are ready to prove Lemma \ref{lem VMOtosmAinfty}.
We start with the simplest version of the John and Nirenberg theorem, with cubes in $\R^d$.

\begin{lem}\label{ljn}
There exist constants $c = c_d > 0$ and $C = C_d \geq 1$, that depend only on $d$, 
such that if $Q_0$ is a cube and $f \in BMO_\ast(Q_0)$, then
\begin{equation} \label{6a8}
\int_{Q_0} \exp\bigg( c \|f\|_{BMO_\ast(Q_0)}^{-1} 
\bigg| f(x) - \Big(\fint_{Q_0} f d\sigma \Big) \bigg| \bigg) d\sigma(x) \leq C \sigma(Q_0).
\end{equation}
\end{lem}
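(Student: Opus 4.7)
The plan is to prove this via the classical iterated Calder\'on--Zygmund stopping-time argument, which I now sketch. By homogeneity (both sides scale the same way under $f \mapsto \lambda f$), I may normalize so that $\|f\|_{BMO_\ast(Q_0)} = 1$, and subtract off $\fint_{Q_0} f$ to work with $F := f - \fint_{Q_0} f$, which has vanishing average on $Q_0$ and satisfies $mo(F,Q) \le 1$ for every subcube $Q \subset Q_0$. The goal is then to prove the exponential distribution estimate
\[
\sigma\bigl(\{x \in Q_0 : |F(x)| > \lambda\}\bigr) \le C \, e^{-c\lambda}\,\sigma(Q_0),
\]
from which \eqref{6a8} follows by integrating $\int_0^\infty e^{c\lambda/2} \sigma(\{|F|>\lambda\})\,d\lambda$ via Fubini.

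First I would fix a threshold $b > 1$ (say $b = 2^{d+1}$) and run a dyadic Calder\'on--Zygmund stopping time for $|F|$ on $Q_0$ at level $b$: since $\fint_{Q_0}|F| \le 1 < b$, bisecting recursively and selecting maximal dyadic subcubes on which the average of $|F|$ first exceeds $b$ produces a (possibly empty) family $\{Q_j^{(1)}\}$ of pairwise disjoint dyadic subcubes with
\[
b < \fint_{Q_j^{(1)}} |F| \le 2^d b, \qquad \sum_j \sigma(Q_j^{(1)}) \le b^{-1}\sigma(Q_0),
\]
and $|F(x)| \le b$ for $\sigma$-almost every $x \in Q_0 \setminus \bigcup_j Q_j^{(1)}$. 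The key consequence of the BMO normalization is that the first inequality above forces $\bigl|\fint_{Q_j^{(1)}} F\bigr| \le 2^d b$, because averaging $F$ against itself on $Q_j^{(1)}$ is controlled by $mo(F,Q_j^{(1)}) \le 1$ plus the bound on $\fint|F|$.

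Next I iterate. On each selected cube $Q_j^{(1)}$, the shifted function $F - \fint_{Q_j^{(1)}} F$ again has mean oscillation $\le 1$ on every dyadic subcube and vanishing average, so the same stopping time produces a family $\{Q_{j,k}^{(2)}\} \subset Q_j^{(1)}$ with total measure at most $b^{-1}\sigma(Q_j^{(1)})$ on which $|F - \fint_{Q_j^{(1)}} F|$ first exceeds $b$, and outside of which this quantity is $\le b$. Combining with the previous step's bound $|\fint_{Q_j^{(1)}} F| \le 2^d b$, we conclude $|F| \le (2^d + 1)b$ off the second-generation cubes. After $n$ iterations I obtain a family $\{Q_\alpha^{(n)}\}$ with
\[
\sum_\alpha \sigma(Q_\alpha^{(n)}) \le b^{-n}\sigma(Q_0) \qquad\text{and}\qquad |F(x)| \le n(2^d+1)b
\]
for $\sigma$-a.e.\ $x \in Q_0 \setminus \bigcup_\alpha Q_\alpha^{(n)}$. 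Reading off the distribution function at $\lambda = n(2^d+1)b$, this gives the exponential decay $\sigma(\{|F|>\lambda\}) \le b^{-\lfloor \lambda/((2^d+1)b)\rfloor}\sigma(Q_0)$, which is the desired bound with $c$ and $C$ depending only on $d$.

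The only step that requires genuine care is verifying the dyadic Calder\'on--Zygmund stopping time rigorously on an arbitrary cube $Q_0$ (rather than on $\R^d$ with translation-invariant dyadic grids) and checking that the collection of maximal stopping cubes does cover $\{x : M^d_{Q_0}|F|(x) > b\}$ up to a null set, where $M^d_{Q_0}$ denotes the local dyadic maximal operator relative to $Q_0$; this is where Lebesgue differentiation along the dyadic filtration is used to ensure $|F| \le b$ a.e.\ off the selected cubes. Everything else is bookkeeping, and nothing about the argument depends on the ambient geometry, so the same proof applies verbatim to $\Gamma = \R^d$ with cubes.
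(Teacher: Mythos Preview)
Your argument is the classical Calder\'on--Zygmund stopping-time proof of the John--Nirenberg inequality, and it is correct. The paper does not actually prove this lemma: it states it as the well-known John--Nirenberg theorem and only remarks that ``the proof only requires a control on $mo(f,Q)$ when $Q$ is a dyadic subcube of $Q_0$,'' which is precisely what your iterated dyadic stopping-time uses. One small expository point: your justification for $\bigl|\fint_{Q_j^{(1)}} F\bigr| \le 2^d b$ is oddly phrased; it follows immediately from $\bigl|\fint_{Q_j^{(1)}} F\bigr| \le \fint_{Q_j^{(1)}} |F| \le 2^d b$, with no need to invoke $mo(F,Q_j^{(1)})$ at that step.
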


\ms
In fact, the proof only requires a control on $mo(f,Q)$ when $Q$ is a dyadic subcube of
$Q_0$ (i.e., obtained from $Q_0$ by the usual dyadic partitions).

\begin{proof}[Proof of Lemma \ref{lem VMOtosmAinfty}]
Let $\omega$ and $k = \frac{d\omega}{d\sigma}$ be as in the statement,
and let $\delta > 0$ be given. Let $\varepsilon > 0$ be given, and choose
$r_0 > 0$ so small that $mo(\log k,\Delta(x,r)) < \varepsilon$ for every surface ball
$\Delta(x,r)$ such that $0 < r \leq r_0$. By standard manipulations of covering
cubes with balls of roughly the  same sizes, we also see that 
$\| \log k \|_{BMO_\ast(Q_0)} \leq C \varepsilon$ for every cube $Q_0$ such that
${\rm diam}(Q_0) \leq r_0$. 

We choose $\gamma = d^{-1/2} r_0$, consider a surface ball $\Delta = \Delta(x,r)$
such that $r \leq \gamma$, and try to prove \eqref{3b48} for any Borel set 
$E \subset \Delta$. Call $Q_0$ the smallest cube that contains $\Delta$; we chose
$\gamma$ so that ${\rm diam}(Q_0) \leq r_0$, and so 
$\| \log(k) \|_{BMO_\ast(Q_0)} \leq C \varepsilon$. This allows us to apply 
Lemma \ref{ljn} and get 
\begin{equation} \label{6a9}
\int_{Q_0} \exp\bigg( a \varepsilon^{-1} 
\bigg| \log(k(x)) - \Big(\fint_{Q_0} \log(k) d\sigma \Big) \bigg| \bigg) d\sigma(x) \leq C,
\end{equation}
with a new constant $a = C^{-1}c > 0$ that depends only on $d$. 
Since \eqref{3b48} does not change when we multiply $\omega$ by a positive constant,
we may assume that $\fint_{Q_0} \log(k) d\sigma = 0$, which will simplify our computations slightly. We write
\begin{equation} \label{6a10}
\omega(Q) = \int_Q k(x) d\sigma(x) \ \text{ and } \omega(E) = \int_E k(x) d\sigma(x) 
\end{equation}
and decide to cut each of these integrals in three, corresponding to the three regions
\begin{multline} \label{6a11}
Q_1 = \big\{ x\in Q_0 \, ; \, \log(k(x)) \leq - A \varepsilon \big\}, \\ 
Q_2 = \big\{ x\in Q_0 \, ; \, |\log(k(x))| \leq A \varepsilon \big\}, \\
Q_3 = \big\{ x\in Q_0 \, ; \, \log(k(x)) \geq A \varepsilon \big\},
\end{multline}
where the large constant $A$ will be chosen soon. Notice that 
$a \varepsilon^{-1} |\log(k)| \geq aA$ on $Q_1 \cup Q_3$, so by Chebyshev
\begin{equation} \label{6a12}
\omega(Q_1) \leq  \sigma(Q_1) \leq  \sigma(Q_1) + \sigma(Q_3) \leq C e^{-aA} \sigma(Q_0).
\end{equation}
Similarly, we cut $Q_3$ into the regions 
$D_j = \big\{ x\in Q_3\, ; \, j + A\varepsilon \leq \log(k(x)) < j+1+ A\varepsilon\big\}$
and get that
\begin{multline} \label{6a13}
\omega(Q_3) \leq \int_{Q_3} k(x) d\sigma(x)
\leq \sum_{j \geq 0} \int_{D_j} k(x) d\sigma(x) \\
\leq \sum_{j \geq 0} e^{j+1+ A\varepsilon} \exp\big(- a \varepsilon^{-1}(j+A\varepsilon)\big)
\int_{D_j} \exp\big(a \varepsilon^{-1}\log(k(x))\big) d\sigma(x) \\
\leq C \sigma(Q_0) e^{1+ A\varepsilon} e^{-aA} \sum_{j \geq 0} e^j e^{-a \varepsilon^{-1} j} \\
\leq C \sigma(Q_0) e^{1+ A\varepsilon} e^{-aA} \leq C e^{-aA/2}
\end{multline}
if we choose $A \geq 2 a^{-1}$ and $\varepsilon \leq A^{-1}$.
Now let $F$ be a Borel subset of $Q_0$; we think about $E$ or $\Delta$. Then
\begin{multline} \label{6a14}
|\omega(F) - \sigma(F)| \leq \omega(F \cap (Q_1 \cup Q_3)) + \sigma(F \cap (Q_1 \cup Q_3))
+ \int_{F \cap Q_2} |k(x)-1| d\sigma \\
\leq \omega(Q_1 \cup Q_3)+ \sigma(Q_1 \cup Q_3) 
+ \int_{Q_2} \big|e^{A\varepsilon}-1 \big| d\sigma
\leq (C e^{-aA/2}  + 2 A\varepsilon)  \sigma(Q_0)
\end{multline}
by \eqref{6a11}-\eqref{6a13}. By definition of $Q_0$, we also have that
$\sigma(\Delta) \geq \alpha \sigma(Q_0)$ for some constant $\alpha > 0$ that depends only
on $d$.  Then we choose $A$, and then $\varepsilon$, so that
$C e^{-aA/2}  + 2 A\varepsilon \leq 10^{-2}\alpha \delta$ in \eqref{6a14}.
Then 
\begin{equation} \label{6a15}
|\omega(\Delta) - \sigma(\Delta)| \leq 10^{-2}\alpha \delta \sigma(Q_0) 
\leq 10^{-3} \delta \sigma(\Delta) 
\end{equation}
and similarly
\begin{equation} \label{6a16}
|\omega(E) - \sigma(E| \leq 10^{-2} \delta \sigma(\Delta), 
\end{equation}
from which it is easy to see that 
$\abs{\frac{\omega(E)}{\omega(\Delta)}-\frac{\sigma(E)}{\sigma(\Delta)}}<\delta$
as in \eqref{3b48}.
\end{proof}

\begin{rem}
We shall not need the fact that Lemma \ref{lem VMOtosmAinfty} is still valid when $\Gamma$ is a chord-arc surface with small enough constant, but this is the case, with essentially the same proof. The point is that we can cut $\Gamma$ into
collections of ``dyadic cubes'' that have roughly the same properties as the dyadic cubes 
in $\R^d$. Then the analogue of Lemma \ref{ljn} holds when $Q_0$ is one of these cubes
(because we only need to estimate $mo(f, Q)$ for dyadic subcubes), and then we can  proceed as above (but cover the surface ball $\Delta$ by a finite collection of ``dyadic cubes'').
This would even work when $\Gamma$ is simply Ahlfors regular 
(in fact the doubling property of $\sigma$ is the important feature).

Also, here we stated the lemma in terms of $VMO$ and vanishing $A_\infty$ condition, 
but the same proof shows that for each $\delta > 0$, we can find 
$\varepsilon = \varepsilon (\delta, d) >0$ such that if $||\log k||_{BMO} \leq \varepsilon$, 
then $\omega \in A_\infty(\sigma,  \delta)$.
\end{rem}

There is a converse to \ref{lem VMOtosmAinfty} and its small constant  variant, 
which we discuss now. 

\begin{lem}\label{lvmo}
There is a constant $\varepsilon > 0$, that depends only on $d$, and for each 
$\delta > 0$, a constant $\varepsilon_1 = \varepsilon_1(\delta, d)$ with the following
property. Let $\Gamma \in CASSC(\varepsilon)$ be a chord-arc surface 
with small enough constant in $\R^{d+1}$ (as in Definition \ref{d1b12}), 
and let $\omega$ be another measure on
$\Gamma$ such that $\omega \in A_\infty(\sigma,\varepsilon_1)$ (see Definition \ref{d1b14}). 
Then $\omega$ is absolutely continuous with respect to $\sigma$, and 
its density $k = \frac{d\omega}{d\sigma}$ satisfies $||\log(k)||_{BMO} \leq \delta$.
\end{lem}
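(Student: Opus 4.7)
My plan is to first establish absolute continuity $\omega \ll \sigma$ to obtain the density $k = d\omega/d\sigma$, then use a Calder\'on--Zygmund iteration on Christ--David cubes of $(\Gamma,\sigma)$ (available because $\Gamma$ is Ahlfors regular by Lemma~\ref{l5a1}) to upgrade \eqref{1b15} into a polynomial-decay distribution bound for $k/m(Q)$, where $m(Q) := \omega(Q)/\sigma(Q)$, and finally integrate this bound together with its symmetric analog for $1/k$ to conclude that $\|\log k\|_{BMO(\Gamma)} \leq \delta$. The key observation is that \eqref{1b15} is symmetric under exchanging $\omega$ and $\sigma$, so $\sigma \in A_\infty(\omega,\varepsilon_1)$ as well, and every statement proved for $k$ has a dual statement for $1/k$.

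For the absolute continuity, I would write $\omega = \omega_a + \omega_s$ as in the Lebesgue decomposition and let $E$ with $\sigma(E)=0$ support $\omega_s$. Applying \eqref{1b15} to $E \cap \Delta$ gives $\omega_s(\Delta) \leq \omega(E \cap \Delta) \leq \varepsilon_1 \omega(\Delta)$ for every surface ball $\Delta$; combined with Lebesgue differentiation in the doubling space $(\Gamma,\sigma)$ and $\omega_s \perp \omega_a$, this forces $\omega_s = 0$. Next fix a Christ cube $Q$ and run the following stopping-time: take the maximal dyadic subcubes $\{Q_j\}$ of $Q$ with $\frac{1}{\sigma(Q_j)}\int_{Q_j} k\, d\sigma > 2m(Q)$. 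Applying \eqref{1b15} to $E_1 := \bigcup_j Q_j \subset Q$ yields
\begin{equation*}
2 m(Q)\, \sigma(E_1) \leq \omega(E_1) \leq \Bigl(\tfrac{\sigma(E_1)}{\sigma(Q)} + \varepsilon_1\Bigr) \omega(Q) = m(Q)\, \sigma(E_1) + \varepsilon_1 m(Q)\, \sigma(Q),
\end{equation*}
hence $\sigma(E_1) \leq \varepsilon_1 \sigma(Q)$. By maximality, $m(Q_j) \leq C_d\, m(Q)$ for a dimensional $C_d$, and Lebesgue differentiation gives $k \leq 2m(Q)$ a.e.\ on $Q \setminus E_1$. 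Iterating the stopping time on each $Q_j$ (now at threshold $2m(Q_j)$) for $n$ steps produces a good set on which $k \leq (2C_d)^n m(Q)$ and a bad set of $\sigma$-measure $\leq \varepsilon_1^n \sigma(Q)$; setting $t = (2C_d)^n$ gives the decay
\begin{equation*}
\sigma\bigl(\{x \in Q : k(x) > t\, m(Q)\}\bigr) \leq C\, t^{-\alpha}\, \sigma(Q), \qquad t \geq 1,
\end{equation*}
with $\alpha = |\log \varepsilon_1|/\log(2C_d) \to +\infty$ as $\varepsilon_1 \to 0$.

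The same stopping-time argument, applied to $\sigma \in A_\infty(\omega,\varepsilon_1)$ using $\omega$-averages of the density $1/k$, yields $\omega(\{k < m(Q)/t\}) \leq C\, t^{-\alpha}\, \omega(Q)$. Decomposing the level set $\{k < m(Q)/t\}$ dyadically in the size of $k$ and using $\omega(A) = \int_A k\, d\sigma$ on each dyadic piece converts this into the dual $\sigma$-bound
\begin{equation*}
\sigma\bigl(\{x \in Q : k(x) < m(Q)/t\}\bigr) \leq C\, t^{-(\alpha-1)}\, \sigma(Q), \qquad t \geq 1,
\end{equation*}
which is meaningful once $\varepsilon_1$ is small enough that $\alpha > 2$. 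Integrating both tails through the layer-cake formula gives $\sigma(Q)^{-1}\int_Q |\log k - \log m(Q)|\, d\sigma \leq C/(\alpha-1)$, which is $\leq \delta$ once $\varepsilon_1 = \varepsilon_1(\delta, d)$ is small enough. Since a Christ cube and a surface ball of comparable diameter are comparable up to bounded overlap, this transfers to $\|\log k\|_{BMO(\Gamma)} \leq \delta$. The principal technical obstacle is exactly the lower tail: \eqref{1b15} applied directly to $\{k < m(Q)/t\}$ only yields the crude bound $\sigma(\{k<m(Q)/t\}) \leq 2\varepsilon_1\sigma(Q)$ with no $t$-decay, which is far from enough for the BMO integral to converge. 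It is the passage through the symmetric $\sigma \in A_\infty(\omega,\varepsilon_1)$ together with the iterated Calder\'on--Zygmund mechanism that upgrades this into the polynomial decay in $t$ needed to close the argument.
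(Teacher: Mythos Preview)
Your approach is correct but takes a substantially longer route than the paper's. The paper applies \eqref{1b15} \emph{once} to each of the level sets $\Delta_1=\{x\in\Delta: k(x)<e^{c_\Delta-\delta}\}$ and $\Delta_3=\{x\in\Delta: k(x)>e^{c_\Delta+\delta}\}$, where $c_\Delta=\log(\omega(\Delta)/\sigma(\Delta))$: from $|e^{-c_\Delta}\omega(\Delta_1)-\sigma(\Delta_1)|\le\varepsilon_1\sigma(\Delta)$ and $\omega(\Delta_1)\le e^{c_\Delta-\delta}\sigma(\Delta_1)$ one reads off $\sigma(\Delta_1)\le\varepsilon_1(1-e^{-\delta})^{-1}\sigma(\Delta)$, and similarly for $\Delta_3$. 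This single-threshold bound $\sigma(\{|\log k-c_\Delta|>\delta\})\le a\,\sigma(\Delta)$ is exactly the John--Str\"omberg weak-BMO condition, and their theorem (valid for doubling measures) immediately gives $\|\log k\|_{BMO}\le C\delta$. By contrast, you run an explicit Calder\'on--Zygmund iteration to manufacture polynomial tail decay, invoke the $\omega$--$\sigma$ symmetry of \eqref{1b15} plus a second iteration for the lower tail, and then integrate---in effect re-proving John--Nirenberg inside the argument. Your concern that the lower tail ``only yields the crude bound with no $t$-decay'' is precisely what the John--Str\"omberg route sidesteps: no $t$-decay is needed, only the bound at the single threshold $t=e^{\delta}$, because the iteration is packaged into the cited theorem. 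Your proof is thus more self-contained but considerably heavier; the paper's is shorter and treats both tails symmetrically without ever needing the $\sigma\in A_\infty(\omega,\varepsilon_1)$ observation. One small technicality: you apply \eqref{1b15} with Christ cubes in place of surface balls, which costs a dimensional constant (absorbed by raising the stopping threshold) and is harmless.
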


\ms
\begin{proof}
 Let $\Gamma \in CASSC(\varepsilon)$ and $\omega \in A_\infty(\sigma,\varepsilon_1)$
 satisfy the assumption. It is a general fact about $A_\infty$ that then 
$\omega$ is absolutely continuous with respect to $\sigma$, so $k$ is well defined
and locally integrable. Let $a > 0$ be small, to be chosen later (depending on $d$).
We want to show that (if $\varepsilon$ and $\varepsilon_1$ are small enough), 
for every surface ball $\Delta$, there is a constant $C_\Delta$ such that 
\begin{equation} \label{6a19}
\sigma\big(\big\{ x\in \Delta \, ; \, |\log(k(x)) - c_\Delta| \geq \delta \big\}\big)
\leq a\sigma(\Delta).
\end{equation}
In fact, the simplest is to take $c_\Delta = \log\Big(\frac{\omega(\Delta)}{\sigma(\Delta)}\Big)$.
Write $\Delta = \Delta_1 \cup \Delta_2 \cup \Delta_3$, with
\begin{multline} \label{6a20}
\Delta_1 = \big\{ x\in \Delta \, ; \, k(x) < e^{c_\Delta -\delta} \big\}, \\ 
\Delta_2 = \big\{ x\in \Delta \, ; \, e^{c_\Delta -\delta} \leq k(x) \leq e^{c_\Delta +\delta} \big\}, \\
\Delta_3 = \big\{ x\in \Delta \, ; \, k(x) > e^{c_\Delta +\delta} \big\};
\end{multline}
we want to show that $\sigma(\Delta_1) + \sigma(\Delta_3) \leq \sigma(\Delta)/100$.
We take $E = \Delta_1$ in the definition \eqref{1b15}; we get that
$ \abs{\frac{\omega(\Delta_1)}{\omega(\Delta)}-\frac{\sigma(\Delta_1)}{\sigma(\Delta)}}<\varepsilon_1$ or, since $\omega(\Delta) = e^{c_\Delta} \sigma(\Delta)$, 
\begin{equation} \label{6a21}
| e^{- c_\Delta}\omega(\Delta_1) - \sigma(\Delta_1)| \leq \varepsilon_1 \sigma(\Delta).
\end{equation}
But $\omega(\Delta_1) = \int_{\Delta_1} k d\sigma \leq e^{c_\Delta -\delta} \sigma(\Delta)$
so $\sigma(\Delta_1) - e^{- c_\Delta}\omega(\Delta_1)
\geq (1-e^{-\delta}) \sigma(\Delta_1)$, so the comparison with \eqref{6a21} yields
$\sigma(\Delta_1) \leq \varepsilon_1 (1-e^{-\delta})^{-1} \sigma(\Delta) < a\sigma(\Delta)/2$ if $\varepsilon_1$ is chosen small enough.
Similarly, 
\begin{equation} \label{6a22}
| e^{- c_\Delta}\omega(\Delta_3) - \sigma(\Delta_3)| \leq \varepsilon_1 \sigma(\Delta)
\end{equation}
by the proof of \eqref{6a21}, and the definition yields
$\omega(\Delta_3) = \int_{\Delta_3} k d\sigma \geq e^{c_\Delta +\delta} \sigma(\Delta)$,
so that $e^{- c_\Delta}\omega(\Delta_3) - \sigma(\Delta_3) 
\geq (e^{-\delta}-1) \sigma(\Delta_3)$ and the comparison yields
$\sigma(\Delta_3) \leq \varepsilon_1 (e^{\delta}-1)^{-1} \sigma(\Delta) < a\sigma(\Delta)/2$. This proves \eqref{6a19}.

Now we claim that a well known result proved by F. John \cite{john1965quasi},
improved by J. O. Str\"omberg \cite{stromberg1979bounded}, allows one to
deduce from \eqref{6a19} that $\log(k) \in BMO$, with a norm less than $C\delta$.
Here $C$ depends on $d$, and of course the difference between $C\delta$ and $\delta$ 
could easily compensated by taking $\varepsilon_1$ smaller. 
The advantage of the result of John and Str\"omberg is that it is directly valid for a doubling measure. 
For the sake of the reader that would know the usual proof of the John-Nirenberg theorem
(with simple stopping times), let us say two words about how the underlying ideas
could be applied.
In $\R^d$, if you have \eqref{6a19} for balls, you also get it for cubes $Q$, by the usual
trick of using \eqref{6a19} for the smallest ball containing $Q$, and 
at the price of starting with the smaller $\wt a = a \sigma(Q)/\sigma(\Delta)$.
Then, if $a$ is small enough, we deduce from \eqref{6a19} that if $Q$ and $R$
are cubes such that $Q \subset R$ and $\rm{diam}(Q) \geq \rm{diam}(R)/2$,
then $|c_Q-c_R| \leq 2\delta$. Then the proof of the John-Nirenberg theorem can 
then be applied, and this yields $||\log(k)||_{BMO} \leq C\delta$. 
So the result is valid when $\Gamma \in CASSC(\varepsilon)$,
$\varepsilon$ small enough (so that $\sigma$ is doubling), with the usual fake dyadic cubes 
on $\Gamma$. We need to take $a$ even smaller, so that in particular we can control 
$|c_Q-c_R|$ for fake cubes, when $Q$ is a child of $R$, but the proof goes through.
\end{proof}

\begin{cor}\label{cvmo}
In Theorem \ref{t1b1}, we can replace the conclusion that 
$\omega^\infty \in A_\infty(\sigma,\delta)$ with the conclusion that
$\omega^\infty$ is absolutely continuous with respect to $\sigma$, with a
density $k = \frac{d\omega}{d\sigma}$ such that $||\log(k)||_{BMO} \leq \delta$.

In Theorem \ref{t1b2}, we can replace the conclusion by the fact that 
for each surface ball $\Delta = B(x,r) \cap \Gamma$ such that 
$\Delta \subset B(X,\kappa \dist(X,\Gamma))$ and $0 < r \leq \tau \dist(X,\Gamma)$,
$\omega^X$ is absolutely continuous with respect to $\sigma$ on $\Delta$,
and the density $k = \frac{d\omega^X}{d\sigma}$ is such that
$||\log(k)||_{BMO(\Delta)} \leq \delta$.
\end{cor}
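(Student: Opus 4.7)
My plan is that Corollary~\ref{cvmo} is essentially a direct consequence of Lemma~\ref{lvmo}, together with the observation that the parameter $\delta$ in the conclusion of Theorems~\ref{t1b1} and~\ref{t1b2} can be taken as small as we wish (at the price of shrinking the parameter $\varepsilon$ on the geometry and DKP norm). Since Lemma~\ref{lvmo} already contains all the serious work, the corollary should be a short bookkeeping exercise.

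For the global statement, given $\delta > 0$, I would first invoke Lemma~\ref{lvmo} to produce $\varepsilon_1 = \varepsilon_1(\delta,d) > 0$ such that any positive Borel measure $\omega$ on $\Gamma \in CASSC(\varepsilon)$ (for some fixed universal small $\varepsilon$) with $\omega \in A_\infty(\sigma,\varepsilon_1)$ is absolutely continuous with respect to $\sigma$ and satisfies $\|\log(d\omega/d\sigma)\|_{BMO(\Gamma)} \leq \delta$. Then I would apply Theorem~\ref{t1b1} with $\varepsilon_1$ playing the role of its small constant $\delta$, obtaining some $\varepsilon' = \varepsilon'(\varepsilon_1,d,\mu_0) = \varepsilon'(\delta,d,\mu_0)$ such that if $\Gamma \in CASSC(\varepsilon')$ and $\cN(A) < \varepsilon'$, then $\omega_L^\infty \in A_\infty(\sigma,\varepsilon_1)$. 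Taking $\varepsilon'' = \min(\varepsilon,\varepsilon')$ finishes the deduction.

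For the local statement (the second part of the corollary), the plan is the same, but using a localized version of Lemma~\ref{lvmo} restricted to sub-balls of a fixed surface ball $\Delta_0$. The proof of Lemma~\ref{lvmo} is already essentially local: the key step \eqref{6a19} is derived for each surface ball separately from \eqref{1b15}, and the John--Str\"omberg argument then requires only a uniform bound $mo(\log k, \Delta') \leq C\delta$ over sub-balls $\Delta' \subset \Delta_0$ (via the local fake-dyadic cube decomposition on $\Delta_0$). In Theorem~\ref{t1b2}, any sub-ball $\Delta' = B(x',r')\cap\Gamma$ of a fixed $\Delta = B(x,r)\cap\Gamma \subset B(X,\kappa\,\delta(X))$ with $r \leq \tau\,\delta(X)$ automatically satisfies $\Delta' \subset B(X,\kappa\,\delta(X))$ and $r' \leq \tau\,\delta(X)$, so \eqref{1b19} is available on every such $\Delta'$. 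Invoking Theorem~\ref{t1b2} with $\delta$ replaced by $\varepsilon_1(\delta,d)$ therefore yields \eqref{1b19} on all sub-balls of $\Delta$ with a uniform constant $\varepsilon_1$, and the local John--Str\"omberg argument inside $\Delta$ delivers $\|\log(d\omega^X/d\sigma)\|_{BMO(\Delta)} \leq \delta$.

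I do not expect any genuine obstacle here. The only minor care needed is to absorb the universal constant $C$ that appears in the conclusion $\|\log k\|_{BMO} \leq C\delta$ of Lemma~\ref{lvmo}'s proof, which is handled by choosing $\varepsilon_1$ corresponding to $\delta/C$ rather than $\delta$. A second minor point is to verify the harmless fact that the John--Str\"omberg/fake-dyadic-cubes machinery used globally in Lemma~\ref{lvmo} localizes to a single surface ball $\Delta$; this follows because $\sigma$ is doubling on $\Gamma$ and the fake-dyadic decomposition of $\Gamma$ restricts naturally to $\Delta$ by taking those cubes contained in a slight enlargement of $\Delta$.
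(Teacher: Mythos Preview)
Your proposal is correct and follows the same approach as the paper: the first part is Theorem~\ref{t1b1} composed with Lemma~\ref{lvmo}, and the second part is Theorem~\ref{t1b2} combined with the (localized) proof of Lemma~\ref{lvmo}. The only point the paper makes explicit that you handle more implicitly is that for the local statement one should apply Theorem~\ref{t1b2} ``with a larger $\kappa$ for security,'' which is exactly the margin you allude to when invoking the fake-dyadic decomposition on a slight enlargement of $\Delta$.
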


\begin{proof}
The first part follows at once from Theorem \ref{t1b1} and Lemma \ref{lvmo}.
The second part follows Theorem \ref{t1b2} (applied with a larger $\kappa$ for security) 
and the proof of Lemma \ref{lvmo}.
\end{proof}

\section{Proof of Lemma \ref{lem em_limit} on the elliptic measure for a limit}
\label{sec emlimit_pf}

In this section, we prove Lemma \ref{lem em_limit}.

We first show that \eqref{eq em_limit} holds when $E=U$ is a bounded open set in $\rd$. 

Let $\set{F_i}_{i\ge1}$ be a sequence of closed sets with $F_i\subset U$ and $F_i\uparrow U$ as $i\to \infty$. 
For each $i\ge1$, let $g_i\in C^\infty_c(U)$, with $0\le g_i\le 1$ and $g_i\equiv 1$ on $F_i$. 
For $X\in\reu$, define
\[
v_i^{(k)}(X):=\int_{\rd}g_i(y)d\omega_{L_k}^X(y),
\]
and 
\[
v_i(X):=\int_{\rd}g_i(y)d\omega_{L}^X(y).
\]
By definition of the elliptic measures, $v_i^{(k)}\in W_{\loc}^{1,2}(\reu)$ is the unique weak solution to 
\begin{equation}\label{eq vki_sol}
\begin{cases}
L_k v_i^{(k)}=0 \qquad \text{in }\reu,\\
v_i^{(k)}(x,0)=g_i(x) \qquad\text{for }x\in\rd
\end{cases}\end{equation}
that satisfies $v_i^{(k)}\in  C(\overline{\reu})$ and $\lim_{\abs{X}\to\infty,\, X\in\reu}v_i^{(k)}(X)=0$. Similarly, $v_i\in W_{\loc}^{1,2}(\reu)$ is the unique weak solution to 
\begin{equation}\label{eq vi_sol}
\begin{cases}
L v_i=0 \qquad \text{in }\reu,\\
v_i(x,0)=g_i(x) \qquad\text{for }x\in\rd
\end{cases}\end{equation}
that satisfies $v_i\in C(\overline{\reu})$ and $\lim_{\abs{X}\to\infty,\, X\in\reu}v_i(X)=0$.
Here, and in the sequel, 
$\overline{\reu}=\rd\times[0,\infty)$.
Let us check that 
for any fixed $i$,
\begin{equation}\label{unifcvg_infty}
    \lim_{\abs{X}\to\infty,\, X\in\reu}v_i^{(k)}(X)=0 \quad\text{uniformly in }k.
\end{equation}
In fact, since $\supp g_i\subset U$ and $0\le g_i\le 1$,
\[
0\le v_i^{(k)}(X)=\int_Ug_i(y)d\omega_{L_k}^X(y)\le \omega_{L_k}^X(\Delta_r),
\]
where $\Delta_r\subset\rd$ is a ball with radius $r$ that contains $U$. By Lemma \ref{lem cfms} and the estimate \eqref{eq2.green} for the Green function, 
\[
\abs{v_i^{(k)}(X)}\le C\, r^{d-1}\abs{X-X_{\Delta_r}}^{1-d},
\]
where $C$ depends only on $\mu_0$ and $d$. 
So \eqref{unifcvg_infty} holds.

By our choice of $F_i$ and $g_i$, 
\begin{equation} \label{9c4}
 \abs{v_i^{(k)}(X)-\omega_{L_k}^X(U)}=\abs{\int_{\rd}(g_i(y)-\1_U(y))d\omega_{L_k}^X(y)}\le \omega_{L_k}^X(U\setminus F_i).
\end{equation}
Since $\cN(A_k)<C_0$ for all $k\in\ZZ_+$, Lemma \ref{lem DKPtoAinfty} asserts that 
$\omega_{L_k}\in A_\infty(d\sigma)$, and the $A_\infty$ constants depend only on $d$ and $C_0$. 
Therefore
we can find constants $\theta$, $C$, that may depend on $X$ and $U$, but not on $k$
or the $F_i$, such that 
\[
\omega_{L_k}^X(U\setminus F_i)\le C \sigma(U\setminus F_i)^{\theta}.
\]
The regularity of the
Lebesgue measure now implies that 
the right-hand side of \eqref{9c4} 
tends to 0 as $i\to\infty$. So for any $X\in\reu$, 
the limit $\lim_{i\to\infty}v_i^{(k)}(X)$ exists, and 
\begin{equation}\label{eq cvg1}
 \lim_{i\to\infty}v_i^{(k)}(X) =\omega_{L_k}^X(U) \qquad\text{uniformly in }k.
\end{equation}
Similarly, 
\[    \abs{v_i(X)-\omega_{L}^X(U)}=\abs{\int_{\rd}(g_i(y)-\1_U(y))d\omega_{L}^X(y)}\le \omega_{L}^X(U\setminus F_i)\to 0
\]
as $i\to\infty$, by the regularity of $\omega_L^X$. So 
\begin{equation}\label{eq cvg2}
 \lim_{i\to\infty}v_i(X) =\omega_{L}^X(U).
\end{equation}

We claim that for any fixed $i$ and 
$X\in\reu$,
\begin{equation}\label{eq cvg3}
    \lim_{k\to\infty}v_i^{(k)}(X)=v_i(X). 
\end{equation}
We shall justify the claim by first showing that as $k\to\infty$, $v_i^{(k)}$ converges to a weak solution $v_i^\infty$ of $Lv_i^\infty=0$ in $\reu$ with boundary data $g_i$, $v_i^\infty\in C(\overline{\reu})$, 
and $v_i^\infty$ converges to 0 uniformly at infinity in $\reu$. 
Then by the maximum principle (or by the definition of elliptic measures), 
$v_i^\infty$ must be the unique weak solution to the Dirichlet problem \eqref{eq vi_sol}. 
Thus, $v_i^\infty=v_i$, and the claim \eqref{eq cvg3} will follow.

Let $i\in\ZZ_+$ be fixed 
for the moment. Since the 
$v_i^{(k)}$ are solutions with the same (smooth) boundary data $g_i$ and the $L_k$ 
have the same ellipticity constant for all $k$, the maximum principle, the H\"older regularity of solutions, 
and the Arzela-Ascoli theorem imply that 
there is a subsequence of $\big\{v_i^{(k)}\big\}_{k\ge1}$,
which we still denote the same way, that converges uniformly on compact subsets  of $\overline\reu$.
That is, there is a continuous function $v_i^\infty$ on $\overline\reu$ such that 
\begin{equation}\label{viinfty}
    \lim_{k\to\infty}v^{(k)}_i(X)=v_i^{\infty}(X), \ \text{ uniformly on compact subsets of } \overline{\reu}
\end{equation}
Since $v_i^{(k)}$ is continuous on $\overline\reu$ and equal to $g_i$ on $\rd$, we get that 
\begin{equation}\label{viinfty bdy}
    v_i^\infty\in C(\overline{\reu}), \quad\text{and } v_i^\infty(x,0)=g_i(x) \ \text{ for } x\in\rd.
   \end{equation}
  By \eqref{viinfty} and \eqref{unifcvg_infty}, 
\begin{equation}\label{viinfty unif}
    \lim_{\abs{X}\to\infty,\,X\in\reu}v_i^\infty(X)=0.
\end{equation}

Next let us show that 
(still for $i$ fixed) 
\begin{equation}\label{vki_W12conv}
    \nabla v_i^{(k)}\to \nabla v_i^\infty \text{ in } L^2_{\loc}(\reu) \text{ as }k\to\infty.
\end{equation}
Let $K$ be a compact subset of $\reu$, and pick a smooth cut-off function
$\eta$, with $0 \leq \eta \leq 1$ everywhere, $\eta = 1$ on $K$, 
and compact support in $\reu$. Let $k, \ell \geq 0$ be given; we want to estimate
\begin{equation} \label{9c12}
I = \mu_0 \int_{\reu} \big|\nabla(v_i^{(k)}-v_i^{(\ell)})\big|^2 \eta^2.
\end{equation}
The ellipticity of $L_k$ gives that
\begin{multline}\label{eq gradvk-vk'} 
I \leq \int_{\reu} A_k\nabla(v_i^{(k)}-v_i^{(\ell)})\cdot\nabla(v_i^{(k)}-v_i^{(\ell)}) \eta^2 \\
\hskip-2cm = \int_{\reu} A_k\nabla(v_i^{(k)}-v_i^{(\ell)})\cdot\nabla\Big((v_i^{(k)}-v_i^{(\ell)}) \eta^2\Big) \\
-  2 \int_{\reu} A_k\nabla(v_i^{(k)}-v_i^{(\ell)})\cdot \nabla \eta \Big( \eta (v_i^{(k)}-v_i^{(\ell)})\Big) 
=: J_1 -2 J_2.
\end{multline}
Set $\varphi = (v_i^{(k)}-v_i^{(\ell)}) \eta^2$, and use it as a test function the equations 
$L_kv_i^{(k)}=0$ and $L_{\ell}v_i^{(\ell)}=0$; we get that 
\begin{multline*}
 J_1 = \int_{\reu} A_k\nabla(v_i^{(k)}-v_i^{(\ell)})\cdot\nabla \varphi
 = - \int_{\reu} A_k\nabla v_i^{(\ell)}\cdot\nabla \varphi
 = \int_{\reu} (A_{\ell}-A_k)\nabla v_i^{(\ell)}\cdot\nabla \varphi \\
 = \int_{\reu} (A_{\ell}-A_k)\nabla v_i^{(\ell)}\cdot\nabla(v_i^{(k)}-v_i^{(\ell)}) \eta^2
 + 2 \int_{\reu} (A_{\ell}-A_k)\nabla v_i^{(\ell)}\cdot\nabla \eta \Big( \eta (v_i^{(k)}-v_i^{(\ell)})\Big)\\
 =: J_3 + 2 J_4.
\end{multline*}
Then by \eqref{eq gradvk-vk'}, $I \leq J_1-2J_2 \leq J_3 - 2J_2 + 2 J_4$.
Now we use Cauchy Schwarz and Young's inequality for each term. That is,
\begin{multline} \label{9a14}
J_3 \leq 
\Big\{\int_{\reu} \big|\nabla(v_i^{(k)}-v_i^{(\ell)})\big|^2 \eta^2 \Big\}^{1/2}
\Big\{\int_{\reu} \Big|(A_k-A_{\ell})\nabla v_i^{(\ell)}\Big|^2 \eta^2 \Big\}^{1/2}
 \\
\leq \frac{1}{4} I + C \int_{\reu} \Big|(A_k-A_{\ell})\nabla v_i^{(\ell)}\Big|^2 \eta^2,
\end{multline}
where $C$ depends also on $\mu_0$, then 
\begin{multline} \label{9a15}
|2J_2| \leq 
C \Big\{\int_{\reu} \big|\nabla(v_i^{(k)}-v_i^{(\ell)})\big|^2 \eta^2 \Big\}^{1/2}
\Big\{\int_{\reu} \Big|v_i^{(k)}-v_i^{(\ell)}\Big|^2 |\nabla \eta|^2 \Big\}^{1/2}
 \\
\leq \frac{1}{4} I + C \int_{\reu} \Big|v_i^{(k)}-v_i^{(\ell)}\Big|^2 |\nabla \eta|^2,
\end{multline}
\begin{multline} \label{9a16}
|2J_4| \leq 
C \Big\{\int_{\reu} |A_{\ell}-A_k|^2 |\nabla v_i^{(\ell)}|^2 \eta^2 \Big\}^{1/2}
\Big\{\int_{\reu} \Big|v_i^{(k)}-v_i^{(\ell)}\Big|^2 |\nabla \eta|^2 \Big\}^{1/2}
 \\
\leq C\int_{\reu} |A_{\ell}-A_k|^2 |\nabla v_i^{(\ell)}|^2 \eta^2 
+ C \int_{\reu} \Big|v_i^{(k)}-v_i^{(\ell)}\Big|^2 |\nabla \eta|^2,
\end{multline}
Altogether, 
\begin{equation} \label{9a17}
I \leq C  \int_{\reu} |A_{\ell}-A_k|^2 |\nabla v_i^{(\ell)}|^2 \eta^2 
+ C \int_{\reu} \Big|v_i^{(k)}-v_i^{(\ell)}\Big|^2 |\nabla \eta|^2 = : CI_1 + C I_2, 
\end{equation}
where $C$ depends only on $d$ and $\mu_0$ but not on $k$, $\ell$, or $i$.

By \eqref{viinfty}, the sequence $\big\{v_i^{(k)}\big\}_k$ converges in $L^2_{\loc}(\reu)$,
so $I_2$ tends to $0$ when $k$ and $\ell$ tend to $+\infty$.
For $I_1$, we use H\"older's 
inequality and then the reverse H\"older inequality 
for solutions (see e.g. \cite{kenig1994harmonic} Lemma 1.1.12) to get that
\begin{multline*}
    \abs{I_1}\le \br{\iint\abs{A_k-A_{\ell}}^{2p'}\eta^{p'}}^{\frac{1}{p'}}\br{\iint\abs{\nabla v_i^{(\ell)}}^{2p}\eta^p}^{\frac{1}{p}}\\
    \le C\br{\iint_{\supp\eta}\abs{A_k-A_{\ell}}^{2p'}}^{\frac{1}{p'}}\br{\iint_{K'}\abs{\nabla v_i^{(\ell)}}^{2}}^{\frac{1}{2}},
\end{multline*}
where $p>1$ is sufficiently close to $1$ so that the reverse H\"older inequality for solutions holds, 
$p'$ is the H\"older exponent of $p$, and $K'$ is a slightly larger compact set in $\reu$ than $\supp \eta$. 
Here, the constant $C$ depends on $p$, $\supp\eta$, and $K'$, but is independent of $k$ and $\ell$.  Notice that by 
Caccioppoli's inequality, the 
regularity of solutions, and the maximum principle, 
\[
\iint_{K'}\abs{\nabla v_i^{(\ell)}}^2\le C\iint_{V}\abs{v_i^{(\ell)}}^2
\le C'\norm{v_i^{(\ell)}}^2_{L^\infty(\reu)}\le C'\norm{g_i}_{L^\infty(\rd)}^2\le C',
\]
where  $V\subset\subset\reu$ is an enlargement of the set $K'$, and $C$, $C'$ are constants independent of $\ell$. Therefore, 
\[
\abs{I_1}\le C'\br{\iint_{\supp\eta}\abs{A_k-A_{\ell}}^{2p'}}^{\frac{1}{p'}}.
\]
Since $A_k(x,t)=A(x,t)$ for $t\in(2^{-k},2^k)$, 
\begin{equation}\label{eq Ak_limitae}
    \lim_{k\to\infty}A_k(x,t)= A(x,t) \qquad\text{for } (x,t)\in\reu \text{ a.e.}  
\end{equation}
So $I_1$ tends to $0$ when $k$ and $\ell$ tend to $+\infty$, 
by the dominated convergence theorem, 
hence $I$ tends to $0$ for each given $K$ and $\eta$, and
\eqref{vki_W12conv} follows. Because of \eqref{vki_W12conv}, one can 
let $k$ tend to $+\infty$ 
in the equation
\[
0=\iint A_k\nabla v_i^{(k)}(Y)\nabla \vp(Y)dY 
\]
where $\vp\in C_0^\infty(\reu)$ is any test function, 
and get that $v_i^\infty$ is a weak solution of  $L v_i^\infty=0$ in $\reu$.
It satisfies \eqref{viinfty bdy} and \eqref{viinfty unif}, and we checked earlier 
that the claim \eqref{eq cvg3} follows.

We claim that 
\begin{equation}\label{lmt_open}
     \lim_{k\to\infty}\omega^{X}_{L_k}(U)=\omega_L^{X}(U)  
\end{equation}
for every bounded open subset $U$ of $\Gamma$ and every $X\in\reu$.
This will follow from \eqref{eq cvg1}, \eqref{eq cvg2} and \eqref{eq cvg3}; to see this, let $U$ and $X$ be fixed, and write
\begin{multline*}
    \abs{\omega_{L_k}^X(U)-\omega_L^X(U)}\le \abs{\omega_{L_k}^X(U)-v_i^{(k)}(X)}+\abs{v_i(X)-\omega_L^X(U)}+\abs{v_i^{(k)}(X)-v_i(X)}.
\end{multline*}
Let $\epsilon>0$ be arbitrary. By \eqref{eq cvg1} and \eqref{eq cvg2}, one can choose $i_0$ sufficiently large so that  \[
\abs{\omega_{L_k}^X(U)-v_{i_0}^{(k)}(X)}<\epsilon/3 \quad\text{for all }k\in\ZZ_+ 
\]
and $\abs{v_{i_0}(X)-\omega_L^X(U)}<\epsilon/3$. For this fixed $i_0$, \eqref{eq cvg3} allows us to take $k$ sufficiently large so that 
\[
\abs{v_{i_0}^{(k)}(X)-v_{i_0}(X)}<\epsilon/3.
\]
Altogether, we obtain that for $k$ sufficiently large, 
$\abs{\omega_{L_k}^X(U)-\omega_L^X(U)}<\epsilon$. 
This proves \eqref{lmt_open} (for $U$ open and bounded), but apparently only for some subsequence
that we constructed. However, if \eqref{lmt_open} were to fail (for $X$ and the open set $U$),
we could first replace $\{ L_k \}$ with a subsequence for which $\omega_{L_{k}}^X(U)$ stays 
far from $\omega_L^X(U)$, and then proceed as above with $\{ L_k \}$, 
find a subsequence with \eqref{lmt_open}, a contradiction. So \eqref{lmt_open}  holds for the whole sequence.

It remains to show that \eqref{eq em_limit} (or \eqref{eq em_limit})
also holds for any Borel (and not just open) bounded set.
So let the bounded Borel set $E$ and the pole $X$ be fixed. 
Since $\omega_L^X$ is regular, we can find 
a nonincreasing sequence $\{ U_i \}$ of open sets that contain $E$ and a nondecreasing sequence 
$\{ F_i \}$ of closed sets contained in $E$, such that 
\begin{equation} \label{9a20}
\lim_{i\to\infty}\omega_L^X(U_i)= \lim_{i\to\infty}\omega_L^X(E_i) = \omega_L^X(E).
\end{equation}
For each $i$, notice that $U_i$ and $U_i \sm F_i$ are bounded and open, 
so \eqref{lmt_open} holds for them and yields
\begin{equation} \label{9a21}
\lim_{k\to\infty}\omega^{X}_{L_k}(U_i)=\omega_L^{X}(U_i)
\ \text{ and } \ 
\lim_{k\to\infty}\omega^{X}_{L_k}(U_i \sm F_i)=\omega_L^{X}(U_i \sm F_i).
\end{equation}
Now let $\epsilon > 0$ be given; by \eqref{9a20} we can find $i$ such that 
$\omega_L^X(U_{i}\setminus F_{i})<\frac{\epsilon}{4}$.
Then \eqref{9a21} says that for $k$ large, 
\begin{equation} \label{9a22}
| \omega^{X}_{L_k}(U_i) - \omega_L^{X}(U_i)| 
+ | \omega^{X}_{L_k}(U_i \sm F_i) - \omega_L^{X}(U_i \sm F_i)|
< \frac{\epsilon}{4}
\end{equation}
and since $F_i \subset E \subset U_i$, 
\begin{multline} \label{9a23}
| \omega^{X}_{L_k}(E) - \omega_L^{X}(E)|
\leq | \omega^{X}_{L_k}(E) - \omega_{L_k}^{X}(U_i)|
+ |\omega_{L_k}^{X}(U_i) - \omega_L^{X}(U_i)|
+ |\omega_L^{X}(U_i)-\omega_L^{X}(E)|
\\
= \omega^{X}_{L_k}(U_i \sm E) +  |\omega_{L_k}^{X}(U_i) - \omega_L^{X}(U_i)| 
+ \omega^{X}_{L}(U_i \sm E)
\\
\leq \omega^{X}_{L_k}(U_i \sm F_i) +  |\omega_{L_k}^{X}(U_i) - \omega_L^{X}(U_i)| 
+ \omega^{X}_{L}(U_i \sm F_i) < \epsilon
\end{multline}
by \eqref{9a21} and \eqref{9a22}. So $\lim_{k\to\infty}\omega^{X}_{L_k}(E) = \omega^{X}_{L}(E)$,
and completes the proof of Lemma \ref{lem em_limit}.

\bibliographystyle{alpha}
\bibliography{reference}

\Addresses

\end{document}